\documentclass[a4paper,11pt,reqno]{amsart}

%%%%%%%%%%%%%%%%%%%%%%%%%%%%%%%%%%%%%%%%%%%%%%%%%%%%%%%%%%%%%%
%% Packages %%%%%%%%%%%%%%%%%%%%%%%%%%%%%%%%%%%%%%%%%%%%%%%%%%%%%
%%%%%%%%%%%%%%%%%%%%%%%%%%%%%%%%%%%%%%%%%%%%%%%%%%%%%%%%%%%%%%

%\usepackage[notcite,notref]{showkeys}
\usepackage{url}
\usepackage[colorlinks=true, linkcolor=blue, citecolor=ForestGreen]{hyperref}
\usepackage{cite}

\usepackage{latexsym,amsmath,amssymb,amsfonts,amsthm}
\usepackage{mathrsfs}
\usepackage{mathtools}
\usepackage{dsfont}
\usepackage{bbm}
\usepackage{soul}     
\usepackage[utf8]{inputenc}

\usepackage[usenames, dvipsnames]{color}
\usepackage[usenames, dvipsnames, cmyk]{xcolor}
\usepackage{graphicx}
\usepackage[scriptsize,hang,raggedright]{subfigure}
\usepackage{multirow}
\usepackage{enumerate}
\usepackage{enumitem}

\usepackage{a4wide}
\usepackage{epsfig,epstopdf}
%\usepackage{pdfsync}
%\usepackage[algoruled,lined,longend,linesnumbered]{algorithm2e}

%%%%%%%%%%%%%%%%%%%%%%%%%%%%%%%%%%%%%%%%%%%%%%%%%%%%%%%%%%%%%%%%%
%% Margins etc  %%%%%%%%%%%%%%%%%%%%%%%%%%%%%%%%%%%%%%%%%%%%%%%%%%%%%%
%%%%%%%%%%%%%%%%%%%%%%%%%%%%%%%%%%%%%%%%%%%%%%%%%%%%%%%%%%%%%%%%%

%\textwidth6in
%\addtolength{\textheight}{1in}
%\setlength{\topmargin}{-0.2in} 
%\setlength{\oddsidemargin}{0in}
%\oddsidemargin  0.2in 
%\evensidemargin -0.4in
%\baselineskip=15pt
%\linespread{1.1} %%wider line spacing
\setlength{\footskip}{30pt}

%%%%%%%%%%%%%%%%%%%%%%%%%%%%%%%%%%%%%%%%%%%%%%%%%%%%%%%%%%%%%%%%%
%%%%%%%%%%%%%%%%%%%%%%%%%%%%%%%%%%%%%%%%%%%%%%%%%%%%%%%%%%%%%%%%%
%%%%%%%%%%%%%%%%%%%%%%%%%%%%%%%%%%%%%%%%%%%%%%%%%%%%%%%%%%%%%%%%%

%\allowdisplaybreaks
\setcounter{tocdepth}{1}
\numberwithin{equation}{section}
%\mathsurround=1pt
 % {\arabic{enumi})}, {\alph{enumi})}, {\Alph{enumi})}

\definecolor{grey}{rgb}{.7,.7,.7}
\definecolor{refkey}{gray}{.45}
\definecolor{labelkey}{gray}{.45}

%%%%%%%%%%%%%%%%%%%%%%%%%%%%%%%%%%%%%%%%%%%%%%%%%%%%%%%%%%%%%%%%%
%% Comments %%%%%%%%%%%%%%%%%%%%%%%%%%%%%%%%%%%%%%%%%%%%%%
%%%%%%%%%%%%%%%%%%%%%%%%%%%%%%%%%%%%%%%%%%%%%%%%%%%%%%%%%%%%%%%%%

  % for text mode only

%%%%%%%%%%%%%%%%%%%%%%%%%%%%%%%%%%%%%%%%%%%%%%%%%%%%%%%%%%%%%%%%%
%% References in equations %%%%%%%%%%%%%%%%%%%%%%%%%%%%%%%%%%%%%%%%%%%%%%
%%%%%%%%%%%%%%%%%%%%%%%%%%%%%%%%%%%%%%%%%%%%%%%%%%%%%%%%%%%%%%%%%

 % standard
\newcommand{\xupref}[2]{\hspace{-0.3ex}\stackrel{\eqref{#1}}{#2}} % for (x)

%%%%%%%%%%%%%%%%%%%%%%%%%%%%%%%%%%%%%%%%%%%%%%%%%%%%%%%%%%%%%%%%%
%% Counting %%%%%%%%%%%%%%%%%%%%%%%%%%%%%%%%%%%%%%%%%%%%%%%%%%%%%%%%
%%%%%%%%%%%%%%%%%%%%%%%%%%%%%%%%%%%%%%%%%%%%%%%%%%%%%%%%%%%%%%%%%

%\numberwithin{equation}{section} 

%%%%%%%%%%%%%%%%%%%%%%%%%%%%%%%%%%%%%%%%%%%%%%%%%%%%%%%%%%%%%%%%%
%% Theorem etc. Abbreviations %%%%%%%%%%%%%%%%%%%%%%%%%%%%%%%%%%%%%%%%%%%
%%%%%%%%%%%%%%%%%%%%%%%%%%%%%%%%%%%%%%%%%%%%%%%%%%%%%%%%%%%%%%%%%

\newtheorem{theorem}{Theorem}[section]
\newtheorem{proposition}[theorem]{Proposition}
\newtheorem{lemma}[theorem]{Lemma}

\newtheorem{corollary}[theorem]{Corollary}

\theoremstyle{remark}
\newtheorem{remark}[theorem]{Remark}
\theoremstyle{definition}

\newtheorem{example}[theorem]{Example}

%%%%%%%%%%%%%%%%%%%%%%%%%%%%%%%%%%%%%%%%%%%%%%%%%%%%%%%%%%%%%%%%%
%% TikZ Packages and Libraries %%%%%%%%%%%%%%%%%%%%%%%%%%%%%%%%%%%%%%%%%%%
%%%%%%%%%%%%%%%%%%%%%%%%%%%%%%%%%%%%%%%%%%%%%%%%%%%%%%%%%%%%%%%%%

\usepackage{tikz}
\usepackage{pgf,pgfplots}
%\pgfplotsset{compat=1.16}
\usetikzlibrary{calc}
\usetikzlibrary{decorations.markings}
\usetikzlibrary{decorations.pathmorphing}
\usetikzlibrary{decorations.shapes}
\usetikzlibrary{shapes,arrows,shapes.geometric,patterns,fadings}

%%%%%%%%%%%%%%%%%%%%%%%%%%%%%%%%%%%%%%%%%%%%%%%%%%%%%%%%%%%%%%%%%
%% Marco's Definitions  %%%%%%%%%%%%%%%%%%%%%%%%%%%%%%%%%%%%%%%%%%%%%%%%%
%%%%%%%%%%%%%%%%%%%%%%%%%%%%%%%%%%%%%%%%%%%%%%%%%%%%%%%%%%%%%%%%%

\newcommand{\e}{\varepsilon}

\newcommand{\N}{\mathbb N}

\newcommand{\R}{\mathbb R}

\newcommand{\dd}{\,\mathrm{d}}
\renewcommand{\setminus}{\backslash}

\newcommand{\defeq}{\coloneqq}

\newcommand{\en}{\mathcal{F}}
\newcommand{\nl}{\mathcal{J}}
\newcommand{\nla}{\mathcal{J}_{-\alpha}}
\newcommand{\nlb}{\mathcal{J}_{\beta}}
\newcommand{\I}{\mathcal{I}}

\newcommand{\Hd}{\mathcal{H}^{d-1}}
\newcommand{\asymm}[2]{#1\triangle#2}
\newcommand{\Asymm}[1]{\Delta(#1)}

%%%%%%%%%%%%%%%%%%%%%%%%%%%%%%%%%%%%%%%%%%%%%%%%%%%%%%%%%%%%%%%%%
%% Ihsan's Definitions  %%%%%%%%%%%%%%%%%%%%%%%%%%%%%%%%%%%%%%%%%%%%%%%%%
%%%%%%%%%%%%%%%%%%%%%%%%%%%%%%%%%%%%%%%%%%%%%%%%%%%%%%%%%%%%%%%%%

%\newcommand{\e}{\varepsilon}
%\newcommand{\R}{\mathbb{R}}

\newcommand{\ba}{\begin{array}}
\newcommand{\ea}{\end{array}}

\newcommand{\tld}[1]{\widetilde{#1}}
\newcommand{\bthm}{\begin{theorem}}
\newcommand{\ethm}{\end{theorem}}
\newcommand{\bprop}{\begin{proposition}}
\newcommand{\eprop}{\end{proposition}}
\newcommand{\blemma}{\begin{lemma}}
\newcommand{\elemma}{\end{lemma}}
\newcommand{\bexmpl}{\begin{example}}
\newcommand{\eexmpl}{\end{example}}

\newcommand{\beqn}{\begin{equation}}
\newcommand{\eeqn}{\end{equation}}
\newcommand{\beqns}{\begin{equation*}}
\newcommand{\eeqns}{\end{equation*}}

\newcommand{\pr}{\prime}

\renewcommand{\leq}{\leqslant}
\renewcommand{\geq}{\geqslant}

\definecolor{mygreen}{rgb}{0.1,0.75,0.2}

\newcounter{myenumi}
\setcounter{myenumi}{0}

\DeclareMathOperator{\dive}{div}

\DeclareMathOperator{\diam}{diam}

\allowdisplaybreaks

%%%%%%%%%%%%%%%%%%%%%%%%%%%%%%%%%%%%%%%%%%%%%%%%%%%%%%%%%%%%%%%%%
%%%%%%%%%%%%%%%%%%%%%%%%%%%%%%%%%%%%%%%%%%%%%%%%%%%%%%%%%%%%%%%%%
%%%%%%%%%%%%%%%%%%%%%%%%%%%%%%%%%%%%%%%%%%%%%%%%%%%%%%%%%%%%%%%%%
%%%%%%%%%%%%%%%%%%%%%%%%%%%%%%%%%%%%%%%%%%%%%%%%%%%%%%%%%%%%%%%%%
%%%%%%%%%%%%%%%%%%%%%%%%%%%%%%%%%%%%%%%%%%%%%%%%%%%%%%%%%%%%%%%%%
%%%%%%%%%%%%%%%%%%%%%%%%%%%%%%%%%%%%%%%%%%%%%%%%%%%%%%%%%%%%%%%%%

\title[Stability of the ball for attractive - repulsive energies]{Stability of the ball for attractive - repulsive energies}

\author{Marco Bonacini}
\address[Marco Bonacini]{Department of Mathematics, University of Trento, Italy}
\email{marco.bonacini@unitn.it}

\author{Riccardo Cristoferi}
\address[Riccardo Cristoferi]{Department of Mathematics - IMAPP, Radboud University, Nijmegen, The Netherlands}
\email{riccardo.cristoferi@ru.nl}

\author{Ihsan Topaloglu}
\address[Ihsan Topaloglu]{Department of Mathematics and Applied Mathematics, Virginia Commonwealth University, Richmond, VA, USA}
\email{iatopaloglu@vcu.edu}

\date{\today}       

\thanks{This is a post-peer-review, pre-copyedit version of an article published in the SIAM Journal on Mathematical Analysis.
The final authenticated version is available online at: \url{https://doi.org/10.1137/22M1506894}.}                                 

\subjclass[2020]{49Q10, 49Q20, 49K21, 70G75, 82B21, 82B24}
\keywords{Local minimizers, Second variation, Attractive-repulsive energies, Power-law interaction kernels.}                                           

\begin{document}

\begin{abstract}
We consider a class of attractive-repulsive energies, given by the sum of two nonlocal interactions with power-law kernels, defined over sets with fixed measure. It has recently been proved by R. Frank and E. Lieb that the ball is the unique (up to translation) global minimizer for sufficiently large mass. We focus on the issue of the stability of the ball, in the sense of the positivity of the second variation of the energy with respect to smooth perturbations of the boundary of the ball. We characterize the range of masses for which the second variation is positive definite (large masses) or negative definite (small masses). Moreover, we prove that the stability of the ball implies its local minimality among sets sufficiently close in the Hausdorff distance, but not in $L^1$-sense.
\end{abstract}

\maketitle
%\tableofcontents

%%%%%%%%%%%%%%%%%%%%%%%%%%%%%%%%%%%%%%%%%%%%%%%%%%%%%
%%%%%%%%%%%%%%%%%%%%%%%%%%%%%%%%%%%%%%%%%%%%%%%%%%%%%
%%%%%%%%%%%%%%%%%%%%%%%%%%%%%%%%%%%%%%%%%%%%%%%%%%%%%
%%%%%%%%%%%%%%%%%%%%%%%%%%%%%%%%%%%%%%%%%%%%%%%%%%%%%
%%%%%%%%%%%%%%%%%%%%%%%%%%%%%%%%%%%%%%%%%%%%%%%%%%%%%

\section{Introduction}\label{sec:intro}

In \cite{BurChoTop18} the following minimization problem among measurable sets $E\subset\R^d$ with fixed volume $|E|=m$ is considered:
\begin{equation} \label{eq:min}
\min\Bigl\{ \en(E) \;:\; E\subset\R^d,\, |E|=m \Bigr\},
\end{equation}
where the functional $\en$ is a prototypical attractive-repulsive energy with power-law kernels
\begin{equation}\label{eq:energy}
\en(E)\defeq \int_E\int_E \frac{1}{|x-y|^\alpha}\dd x\dd y + \int_E\int_E |x-y|^\beta\dd x\dd y, \qquad \alpha\in(0,d),\,\beta>0.
\end{equation}
Such nonlocal interaction energies arise in descriptions of systems of uniformly distributed interacting particles, in particular in models of collective behavior of many-agent systems related to swarming (see e.g. \cite{BernoffTopaz,FetecauHuangKolokolnikov,TopazBertozzi2}).
For an introductory survey on this and related problems, see \cite{Fra}.

As a simple scaling argument shows, the first term in \eqref{eq:energy} is a repulsive interaction and is dominant in the small mass regime, whereas the second term is attractive and is dominant in the large mass regime. It is convenient to introduce a notation for the kernels and for the attractive and repulsive parts of the energy: for $\sigma>-d$ we set
\begin{equation}\label{eq:nl_interaction}
K_\sigma(r) \defeq r^\sigma, \qquad \nl_\sigma(E)\defeq \int_E\int_E |x-y|^\sigma\dd x\dd y.
\end{equation}
With this notation $\en(E)=\nl_{-\alpha}(E)+\nl_{\beta}(E)$.

Several results appeared recently in the mathematical literature about the minimization of energies of the form \eqref{eq:energy} (also for more general kernels) and about the corresponding relaxed problem among uniformly bounded densities
\begin{equation} \label{eq:minrelaxed}
\begin{split}
\min\biggl\{ \en(\rho)\defeq \int_{\R^d}\int_{\R^d} \frac{\rho(x)\rho(y)}{|x-y|^\alpha}&\dd x\dd y + \int_{\R^d}\int_{\R^d} \rho(x)\rho(y)|x-y|^\beta\dd x\dd y \;:\; \\
& \rho\in L^1(\R^d)\cap L^\infty(\R^d),\, 0\leq\rho\leq1,\, \int_{\R^d}\rho(x)\dd x =m \biggr\}.
\end{split}
\end{equation}
Existence of minimizers for \eqref{eq:minrelaxed} is proved in \cite{ChFeTo14}, whereas in \cite{BurChoTop18} it is shown that a set $E$ is a minimizer for \eqref{eq:min} if and only if its characteristic function $\rho=\chi_E$ solves the relaxed problem \eqref{eq:minrelaxed}. While the minimization problem \eqref{eq:minrelaxed} has also been studied over probability measures, in \cite{CDM14} it is shown that for $d-2\leq \alpha < d$ and $\beta>0$ the optimal measures are actually $L^\infty$-functions. Recently, in \cite{CarPra}, this result is extended to a wide class of interaction kernels, where the authors also provide an a priori bound on the $L^\infty$-norm of minimizers. The problem \eqref{eq:minrelaxed} is in particular relevant to models of crowd motion as investigated in \cite{MaRoSa10,MaRoSaVe11} where the density $\rho$ describing the distribution of individuals is uniformly bounded. A question of interest is when the optimal densities are uniformly distributed on their support and when the constraint is saturated. In this case the problem \eqref{eq:minrelaxed} reduces to \eqref{eq:min} where $\rho$ is described by the characteristic function of a set.

It is known that, for certain ranges of the parameters $\alpha$, $\beta$, \textit{balls are global minimizers of \eqref{eq:energy} for large mass}: more precisely, by using quantitative rearrangement inequalities Frank and Lieb in \cite{FraLie21} proved that for all $\beta>0$ and $0<\alpha<d-1$ there is a threshold $m_{\rm ball}\in(0,\infty)$ such that the ball with mass $m$ is the only (up to translation) minimizer of $\en$ for $m>m_{\rm ball}$.
This result had already been obtained in the special case of quadratic attraction ($\beta=2$) by Burchard, Choksi and the third author in \cite{BurChoTop18}, by exploiting the convexity of the energy among densities $\rho$ with zero center of mass; a similar argument was afterwards used by Lopes in \cite{Lop19} to prove that, for $2<\beta\leq 4$ (and any $\alpha<d$) minimizers of \eqref{eq:minrelaxed} are unique up to translation and radially symmetric.
It is worth to remark that, as observed in \cite{FraLie21}, for $\alpha\in[d-1,d)$ \textit{characteristic functions of balls are never critical points for the relaxed energy} and therefore they are never minimizers (see also Remark~\ref{rmk:ball-noncrit}).

Concerning the small mass regime, in \cite{BurChoTop18} it is shown that for $\beta=2$ and $\alpha\in[d-2,d)$ the minimization problem \eqref{eq:min} does not have a solution for all masses $m$ sufficiently small: indeed, in this case the minimizer of \eqref{eq:minrelaxed}, which is unique by convexity of the energy, satisfies $\rho(x)<1$ almost everywhere. In \cite{FraLie18} the same is proved for $d=3$, $\alpha=1$, and any $\beta>0$.

Finally, in the Coulomb case ($\alpha=d-2$) with quadratic attraction ($\beta=2$) the minimization problem \eqref{eq:min} is completely solved in \cite{BurChoTop18}: the unique solution is the ball if $m\geq m_{\rm ball}$, while there is no solution if $m<m_{\rm ball}$, and $m_{\rm ball}=\frac{(d-2)}{2}\omega_d$, where $\omega_d\defeq |B_1|$ denotes the volume of the ball in $\R^d$ with radius 1. The explicit value of $m_{\rm ball}$ is also computed in \cite{Lop19} for the cases $d=3$, $\alpha=1$, and $\beta=3,4$.

\medskip

In this paper we address the issues of the \emph{stability and local optimality of the ball}. In our main result we characterize the range of masses for which the ball is a volume-constrained stable set for $\en$ and for $-\en$ (in the sense of nonnegativity of the second variation of the energy with respect to smooth perturbations of the boundary of the ball, see Section~\ref{subsect:var} for the precise definition). Moreover, we prove that the strict stability of the ball yields its (quantitative) local minimality/maximality among sets with the same volume and whose boundary is contained in a small uniform neighbourhood of the boundary of the ball.
The mass thresholds which separate the volume-constrained stability/instability regions for the ball are determined explicitly in terms of the parameters $\alpha\in(0,d-1)$, $\beta>0$, and are given by the following constants:
\begin{equation} \label{eq:m*}
m_* = m_*(d,\alpha,\beta) \defeq
\begin{cases}\displaystyle
\displaystyle \omega_d\biggl( \frac{\alpha(d-\alpha)(2d+2+\beta)}{\beta(d+\beta)(2d+2-\alpha)}\cdot\frac{\nla(B_1)}{\nlb(B_1)} \biggr)^{\frac{d}{\alpha+\beta}} & \text{if }\beta\geq \beta_*, \\[3ex]
\displaystyle \omega_d \biggl( \frac{\alpha(d-\alpha)(2d-\alpha)(d-1+\beta)}{\beta(d+\beta)(2d+\beta)(d-1-\alpha)}\cdot\frac{\nla(B_1)}{\nlb(B_1)} \biggr)^{\frac{d}{\alpha+\beta}}& \text{if }\beta<\beta_*,
\end{cases}
\end{equation}
%\begin{equation} \label{eq:m*}
%m_* = m_*(d,\alpha,\beta) \defeq
%\begin{cases}\displaystyle
%\displaystyle \omega_d 2^{-d}\biggl( \frac{\alpha(2d+2+\beta)(2d+\beta)(2d-2+\beta)(d-1-\alpha)}{\beta(2d+2-\alpha)(2d-\alpha)(2d-2-\alpha)(d-1+\beta)} \cdot \frac{\Gamma\bigl(\frac{d-1-\alpha}{2}\bigr)\Gamma\bigl(\frac{2d-2+\beta}{2}\bigr)}{\Gamma\bigl(\frac{2d-2-\alpha}{2}\bigr)\Gamma\bigl(\frac{d-1+\beta}{2}\bigr)} \biggr)^{\frac{d}{\alpha+\beta}} & \text{if }\beta\geq \beta_*, \\[3ex]
%\displaystyle \omega_d 2^{-d}\biggl( \frac{\alpha(2d-2+\beta)}{\beta(2d-2-\alpha)} \cdot \frac{\Gamma\bigl(\frac{d-1-\alpha}{2}\bigr)\Gamma\bigl(\frac{2d-2+\beta}{2}\bigr)}{\Gamma\bigl(\frac{2d-2-\alpha}{2}\bigr)\Gamma\bigl(\frac{d-1+\beta}{2}\bigr)} \biggr)^{\frac{d}{\alpha+\beta}}& \text{if }\beta<\beta_*,
%\end{cases}
%\end{equation}
where
\begin{equation} \label{eq:beta*}
\beta_*=\beta_*(d,\alpha)\defeq \frac{6d+2+\alpha(d-1)}{d-1-\alpha}\,,
\end{equation}
and
\begin{equation} \label{eq:m**}
m_{**} = m_{**}(d,\alpha,\beta) \defeq \omega_d \biggl( \frac{\alpha(d-\alpha)}{\beta(d+\beta)}\cdot \frac{\nla(B_1)}{\nlb(B_1)} \biggr)^{\frac{d}{\alpha+\beta}}.
\end{equation}
The values of $\nla(B_1)$ and $\nlb(B_1)$ which appear in the previous formulas can be expressed in terms of the $\Gamma$ function (see Remark~\ref{rmk:explicitGamma}), so that $m_*$ and $m_{**}$ can be computed explicitly. For every $m>0$ we also denote by $r(m)>0$ the radius of a ball with volume $m$, $|B_{r(m)}|=m$. We have the following theorem, which is the main result of the paper.

\begin{theorem} \label{thm:main}
Let $\alpha\in(0,d-1)$, $\beta>0$, and let $m_*$, $m_{**}$ be defined by \eqref{eq:m*} and \eqref{eq:m**} respectively. Then:
\begin{enumerate}
\item the ball $B_{r(m)}$ with mass $m$ is a volume-constrained stable set for the functional $\en$ if and only if $m\geq m_*$,
\item the ball $B_{r(m)}$ with mass $m$ is a volume-constrained stable set for the functional $-\en$ if and only if $m\leq m_{**}$.
\end{enumerate}
Moreover, for every $m\in(0,m_{**})\cup(m_*,\infty)$ there exist $\overline{C}>0$ and $\bar\e>0$ (depending only on $d$, $\alpha$, $\beta$, and $m$) with the following property: for every measurable set $E\subset\R^d$ with $|E|=|m|$ and such that 
\begin{equation} \label{eq:unif-ball}
B_{r(m)(1-\bar{\e})}(x_0) \subset E \subset B_{r(m)(1+\bar{\e})}(x_0)
\end{equation}
for some $x_0\in\R^d$, one has
\begin{align}
\en(E) - \en(B_{r(m)}) \geq \overline{C} (\Asymm{E})^2 & \qquad\text{if }m>m_{*}, \label{eq:quant1}\\
\en(E) - \en(B_{r(m)}) \leq - \overline{C} (\Asymm{E})^2 & \qquad\text{if }m<m_{**}, \label{eq:quant2} 
\end{align}
where $\Asymm{\cdot}$ denotes the asymmetry
\begin{equation} \label{eq:asymm}
\Asymm{E} \defeq \inf_{y\in\R^d}|\asymm{E}{B_{r(|E|)}(y)}|.
\end{equation}
\end{theorem}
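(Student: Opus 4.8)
The first task is to compute the second variation of $\en$ at a ball and diagonalize it in spherical harmonics. Writing a volume-preserving normal perturbation of $\partial B_{r(m)}$ as a function $\varphi$ on the sphere expanded in spherical harmonics $\varphi = \sum_{k\geq 1} \sum_j a_{k,j} Y_{k,j}$ (the $k=0$ mode excluded by the volume constraint, the $k=1$ modes corresponding to translations and hence neutral), one shows that the second variation of each piece $\nl_\sigma$ is diagonal with eigenvalues $\lambda_k^\sigma(r)$ depending only on the degree $k$. This is a routine but lengthy computation: expand the kernel in Gegenbauer polynomials, use the addition theorem for spherical harmonics, and integrate radially; each $\lambda_k^\sigma$ comes out as an explicit ratio of $\Gamma$-factors times $r^{2d+\sigma}$. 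By scaling, $\partial^2\en$ at the ball of volume $m$ is positive (negative) definite on the admissible perturbations iff $\lambda_k^{-\alpha}(r(m)) + \lambda_k^\beta(r(m)) \geq 0$ (resp. $\leq 0$) for every $k\geq 2$. Because the two families scale differently in $r$, this inequality for fixed $k$ is equivalent to $m$ lying above (below) an explicit threshold $m_k$, and the key structural fact—the main obstacle of this first part—is to show that $\inf_{k\geq 2} m_k$ is attained either at $k=2$ or $k=3$: one must prove that the sequence $k\mapsto m_k$ is, after the first couple of terms, monotone, which requires a careful asymptotic and monotonicity analysis of the $\Gamma$-quotients. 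The case distinction $\beta \lessgtr \beta_*$ in \eqref{eq:m*} is precisely the dichotomy between whether $k=2$ or $k=3$ gives the binding constraint, and $m_{**}$ in \eqref{eq:m**} is the $k=2$ threshold for $-\en$ (for the maximization problem $k=2$ is always binding). This yields statements (i) and (ii).

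For the quantitative local minimality \eqref{eq:quant1}–\eqref{eq:quant2}, I would follow the now-standard strategy (as in the work of Acerbi–Fusco–Morini and subsequent literature on nonlocal isoperimetric problems) of deducing a quantitative inequality from strict stability via a contradiction/compactness argument, but adapted to the $L^\infty$/Hausdorff-type constraint \eqref{eq:unif-ball} rather than an $L^1$ one. Suppose \eqref{eq:quant1} fails: there is a sequence $E_n$ with $|E_n| = m$, satisfying \eqref{eq:unif-ball} with $\bar\e = 1/n$, and with $\en(E_n) - \en(B_{r(m)}) < \tfrac1n (\Asymm{E_n})^2$; after translating we may assume the barycenter is fixed. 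The uniform two-sided inclusion forces $\partial E_n$ to be a graph over $\partial B_{r(m)}$ of a function $\varphi_n$ with $\|\varphi_n\|_{L^\infty}\to 0$; moreover a minimality-type argument (replacing $E_n$ by its "radial average" or using that $\en$ decreases under suitable rearrangements of the graph, together with regularity theory for the associated Euler–Lagrange equation) upgrades this to $\|\varphi_n\|_{C^{1,\gamma}}\to 0$, so the $E_n$ are genuine normal graphs. One then performs a second-order Taylor expansion of $\en$ along the path from $B_{r(m)}$ to $E_n$, estimates the error terms by the $C^1$-smallness, and uses strict stability (positive definiteness of $\partial^2\en$ with a spectral gap, valid for $m > m_*$) to conclude $\en(E_n) - \en(B_{r(m)}) \geq c\|\varphi_n\|_{H^1}^2 \gtrsim (\Asymm{E_n})^2$, contradicting the assumption; the reverse inequalities for $-\en$ when $m < m_{**}$ are symmetric.

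The main obstacle is the regularity upgrade from the Hausdorff/$L^\infty$ closeness in \eqref{eq:unif-ball} to $C^1$-closeness: unlike the classical setting, the competitors $E_n$ are not assumed to be minimizers of any functional, so one cannot directly invoke density estimates and Euler–Lagrange regularity. The resolution is to pass to an auxiliary minimization problem—minimize $\en$ over sets satisfying \eqref{eq:unif-ball} with the volume constraint—whose minimizers do enjoy the regularity needed (the obstacle-type constraint \eqref{eq:unif-ball} is inactive near a smooth ball, so interior regularity theory applies), and to note that it suffices to prove \eqref{eq:quant1}–\eqref{eq:quant2} for these minimizers; one must also verify the nonlocal potential $v_\sigma = K_\sigma * \chi_E$ is Hölder continuous with the required uniformity, which follows from the $L^\infty$-bound on $\chi_E$ and standard potential estimates since $\alpha < d$ and $\beta > 0$. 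A secondary technical point is handling the translation invariance (the $k=1$ neutral modes): one projects out the translational component of $\varphi_n$ by optimizing the barycenter, exactly as in \eqref{eq:asymm}, so that the spectral gap on the orthogonal complement can be used. Finally, establishing sharpness of the thresholds (the "only if" directions in (i) and (ii)) is immediate from the spectral computation: when $m < m_*$ the eigenfunction associated to the first negative eigenvalue provides an explicit volume-preserving perturbation decreasing $\en$, so the ball is not stable.
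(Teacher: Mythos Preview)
Your plan for parts (i)--(ii) is essentially the same as the paper's: reduce by scaling to the unit ball with a parameter $\gamma$, diagonalize the second variation in spherical harmonics, and identify the thresholds as the sup and inf over $k\geq 2$ of the ratio of eigenvalues. One small correction: the supremum giving $m_*$ is not always attained at $k=2$ or $k=3$; for $\beta<\beta_*$ it is the \emph{limit} $\lim_{k\to\infty}X_k=1$ that is binding (see the appendix), while $k=3$ is binding only for $\beta\geq\beta_*$. The infimum giving $m_{**}$ is always at $k=2$.

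For the quantitative inequalities \eqref{eq:quant1}--\eqref{eq:quant2}, however, your approach has a genuine gap. The assertion that the two-sided inclusion $B_{r(m)-\bar\e}\subset E_n\subset B_{r(m)+\bar\e}$ ``forces $\partial E_n$ to be a graph over $\partial B_{r(m)}$'' is false: take $E_n=(B_{r(m)}\setminus B_\delta(p))\cup B_\delta(q)$ with $p$ just inside and $q$ just outside $\partial B_{r(m)}$. Such sets satisfy the inclusion for $\bar\e\sim\delta$ but are not radial graphs, and no regularity theory applies to them since they minimize nothing. Your proposed fix via an auxiliary constrained minimization does not clearly close the gap either: proving the quantitative inequality for the \emph{minimizer} of $\en$ in the constrained class says nothing about the \emph{generic} competitor $E_n$ from the contradiction sequence, unless you are tacitly invoking a selection-principle argument (minimize a penalized functional with fixed asymmetry), which you do not spell out and which would itself require substantial work here since there is no perimeter term to provide compactness and regularity.

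The paper avoids regularity theory entirely and proceeds constructively in two steps. First, a direct Fuglede-type expansion (Theorem~\ref{thm:fuglede}) shows that for \emph{nearly-spherical} sets $\partial E=\{(1+u(x))x\}$ one has $\en_\gamma(E)-\en_\gamma(B_1)\geq C\|u\|_{L^2}^2$ as soon as $\|u\|_{L^\infty}$ is small---no $C^1$ control is needed, because the quadratic form is coercive in $L^2$ (not in a fractional Sobolev norm as in the perimeter case) and the remainder is bounded by $\omega(t)\|u\|_{L^2}^2$ after a careful splitting of the double integral. Second (Theorem~\ref{thm:locmin_infinity}), an arbitrary $E$ satisfying the two-sided inclusion is replaced by a nearly-spherical set via the explicit Fusco--Pratelli transport construction: one rearranges mass radially using the local strict monotonicity of the potential $\psi$ of the ball near $|x|=1$ (Lemma~\ref{lem:potential}, valid precisely when $\gamma>\gamma_*$ or $\gamma<\gamma_{**}$), producing a set $E_3$ which is a radial graph, has no larger energy, and has asymmetry comparable to that of $E$. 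Combining the two steps gives \eqref{eq:quant1}--\eqref{eq:quant2} without any compactness or regularity argument.
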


The thresholds $m_*$ and $m_{**}$ are determined by means of a spherical harmonics expansion of the quadratic form associated to the second variation of the energy (for stability analyses using similar techniques see \cite{BC14,FFMMM,FraLie19_note,FusPra20,Asc22}). The second part of the statement, namely the local optimality of the ball, is proved in two steps. In the first one (see Section~\ref{sec:fuglede}) we show by a standard Fuglede-type argument that the stability inequalities \eqref{eq:quant1}--\eqref{eq:quant2} are valid whenever $E$ is a nearly-spherical set, that is, the boundary of $E$ is a normal graph over the boundary of the ball and is uniformly close to it. The second step amounts to improve the local optimality to the larger class of competitors whose boundary is contained in a small tubular neighbourhood of the boundary of the ball, in the sense of \eqref{eq:unif-ball}: in this case we exploit a transport argument devised by Fusco and Pratelli in \cite{FusPra20} which allows to replace any such set by a nearly-spherical set and hence to exploit the previous step of the proof. This strategy is similar in spirit to the argument that Christ used in \cite{Christ17} (see also \cite{FraLie19_note,FraLie21}).

It would be natural to see whether or not the previous result can be further extended to an $L^1$-neighbourhood of the ball, namely whether \eqref{eq:quant1} holds for every set $E$ such that $\Asymm{E}$ is small enough. This is indeed usually the case in similar problems where the attractive part of the energy is given by the perimeter. However, in our case the lack of the regularizing effect of the perimeter prevents such a possibility. In Section~\ref{sec:L1minimality} we discuss a necessary condition for the ball to be an $L^1$-local minimizer; in particular, we show that there are values of the parameters such that this condition is not satisfied for some $m>m_*$, so that the result cannot actually be improved.

Notice that the same question for \eqref{eq:quant2} has obviously a negative answer, since the energy can be made arbitrarily large in any $L^1$-neighbourhood of the ball. Indeed, for every $r>0$ the family of sets $E_k^{(r)}\coloneqq (B_1\setminus B_r)\cup B_r(ke_1)$, $k\in\N$, obtained by removing a little ball from $B_1$ and sending it to infinity, are such that $\en(E_k^{(r)})\to\infty$ as $k\to\infty$, whereas $|\asymm{E_k^{(r)}}{B_1}|\leq 2|B_r|$ can be made arbitrarily small.

\begin{remark} \label{rmk:explicit}
In the case $\alpha=d-2$, $\beta=2$, the stability threshold $m_*$ coincides with the global minimality threshold $m_{\rm ball}=\frac{(d-2)}{2}\omega_d$ computed in \cite{BurChoTop18}. This fact is not surprising, since for quadratic attraction the functional is strictly convex among densities with zero center of mass. In the other cases in which the global minimality threshold is known explicitly ($d=3$, $\alpha=1$ and $\beta=3,4$, see \cite{Lop19}) one has instead the strict inequality $m_*<m_{\rm ball}$: there is therefore an intermediate regime in which the ball is a local minimizer among perturbations as those considered in Theorem~\ref{thm:main}, but ceases to be a global minimizer. In these cases, for $m_*<m<m_{\rm ball}$ the potential of the ball does not satisfy the necessary condition for the $L^1$-local minimality discussed in Section~\ref{sec:L1minimality}.
\end{remark}

In the rest of the paper, it will be convenient to normalize the volume constraint and to work with sets having the fixed mass $|E|=|B_1|$. By scaling (see the proof of Theorem~\ref{thm:main} at the end of Section~\ref{sec:locmin}) this amounts to introduce a parameter in the functional $\en$: we will consider for $\gamma>0$ the energy
\begin{equation}\label{eq:energy-gamma}
\en_\gamma(E)\defeq \int_E\int_E \frac{1}{|x-y|^\alpha}\dd x\dd y + \gamma \int_E\int_E |x-y|^\beta\dd x\dd y
\end{equation}
so that in particular $\en=\en_1$. 

\medskip\noindent\textbf{Structure of the paper.}
In Section~\ref{sec:stability} we compute the second variation of the energy and we determine its sign by means of a spherical harmonics expansion. We then show the local minimality/maximality of the ball among nearly-spherical sets in Section~\ref{sec:fuglede}, and among more general perturbations which are close in the Hausdorff distance in Section~\ref{sec:locmin}, where the proof of Theorem~\ref{thm:main} is completed. In Section~\ref{sec:L1minimality} we discuss a necessary condition for the $L^1$-local minimality of the ball in terms of its potential.

%%%%%%%%%%%%%%%%%%%%%%%%%%%%%%%%%%%%%%%%%%%%%%%%%%%%%
%%%%%%%%%%%%%%%%%%%%%%%%%%%%%%%%%%%%%%%%%%%%%%%%%%%%%
%%%%%%%%%%%%%%%%%%%%%%%%%%%%%%%%%%%%%%%%%%%%%%%%%%%%%
%%%%%%%%%%%%%%%%%%%%%%%%%%%%%%%%%%%%%%%%%%%%%%%%%%%%%
%%%%%%%%%%%%%%%%%%%%%%%%%%%%%%%%%%%%%%%%%%%%%%%%%%%%%

\section{Stability of the ball}\label{sec:stability}

In this section we discuss the volume-constrained stability of the unit ball for the family of functionals $\en_\gamma$, $\gamma>0$, see \eqref{eq:energy-gamma}. In the main result of this section, Theorem~\ref{thm:stability}, we obtain two explicit thresholds $0<\gamma_{**}<\gamma_*<\infty$ separating the stability/instability regions for $\en_\gamma$ and $-\en_\gamma$.

%%%%%%%%%%%%%%%%%%%%%%%%%%%%%%%%%%%%%%%%%%%%%%%%%%%%%

\subsection{First and second variations} \label{subsect:var}
We start by recalling the notions of volume-constrained stationary set and stable set for the energy $\en_\gamma$. Given a vector field $X\in C^\infty_{\mathrm c}(\R^d;\R^d)$, the \emph{flow $\{\Phi_t\}_t$ induced by $X$} is the map $\Phi:(t,x)\in\R\times\R^d\mapsto \Phi_t(x)$ implicitly defined by the system of ODE's
\begin{equation} \label{eq:flow}
\begin{cases}
\partial_t\Phi(t,x)= X(\Phi(t,x)), \\
\Phi(0,x)=x,
\end{cases}
\end{equation}
which is a smooth family of diffeomorphisms for $|t|<\e$, for some $\e>0$. Given a set $E\subset\R^d$ with finite volume, we say that $X$ induces a \emph{volume-preserving flow on $E$} if $|\Phi_t(E)|=|E|$ for every $t\in(-\e,\e)$.

Given a set $E$ with finite volume and a vector field $X\in C^\infty_{\mathrm c}(\R^d;\R^d)$, we define the \emph{first} and \emph{second variations} of $\en_\gamma$ at $E$, along the flow $\{\Phi_t\}_t$ induced by $X$, as
\begin{equation}\label{eq:variations}
\delta\en_\gamma(E)[X] \defeq \frac{\dd}{\dd t}\en_\gamma(\Phi_t(E))\Big|_{t=0},
\qquad
\delta^2\en_\gamma(E)[X] \defeq \frac{\dd^2}{\dd t^2}\en_\gamma(\Phi_t(E))\Big|_{t=0}.
\end{equation}
We then say that the set $E$ is a \emph{volume-constrained stationary set} for $\en_\gamma$ if $\delta\en_\gamma(E)[X]=0$ for every vector field $X$ which induces a volume-preserving flow on $E$, and that $E$ is a \emph{volume-constrained stable set} for $\en_\gamma$ if it is stationary and in addition $\delta^2\en_\gamma(E)[X]\geq0$ for every vector field $X$ which induces a volume-preserving flow on $E$.

Analogous definitions can be of course given for the nonlocal interaction $\nl_\sigma$ alone, see \eqref{eq:nl_interaction}. The following theorem provides explicit formulas for the first and second variation of $\nl_\sigma$, for $\sigma\in(-d,\infty)$. The result is proved, for instance, in \cite[Theorem~6.1]{FFMMM} in the case $\sigma\in(-d,0)$, but it is straightforward to check that the proof extends to $\sigma\geq0$.

\begin{theorem}[First and second variation of $\nl_\sigma$] \label{thm:var}
Let $\sigma\in(-d,\infty)$, let $E\subset\R^d$ be an open and bounded set with $C^2$ boundary, and let $X\in C^\infty_{\mathrm c}(\R^d;\R^d)$. Then, denoting by $\zeta=X\cdot\nu_E$ the normal component of the vector field $X$ on $\partial E$, one has
\begin{align}
\delta\nl_\sigma(E)[X] & = \int_{\partial E} H_{\sigma,\partial E}(x)\zeta(x)\dd\Hd_x, \label{eq:Ivar} \\
\delta^2\nl_\sigma(E)[X] & = - \int_{\partial E}\int_{\partial E} K_\sigma(|x-y|) |\zeta(x)-\zeta(y)|^2\dd\Hd_x\dd\Hd_y + \int_{\partial E} c^2_{\sigma,\partial E}\zeta^2\dd\Hd \nonumber \\
& \qquad + \int_{\partial E} H_{\sigma,\partial E}\Bigl( (\dive X)\zeta - \dive_\tau(\zeta X_\tau) \Bigr)\dd\Hd, \label{eq:IIvar}
\end{align}
where $\dive_\tau$ is the tangential divergence on $\partial E$, $X_\tau\defeq X-\zeta\nu_E$, and for $x\in\partial E$ we define
\begin{equation} \label{eq:nlcurvature}
H_{\sigma,\partial E}(x) \defeq 2\int_E K_\sigma(|x-y|)\dd y,
\quad
c^2_{\sigma,\partial E}(x)\defeq \int_{\partial E}K_\sigma(|x-y|)|\nu_E(x)-\nu_E(y)|^2\dd\Hd_y.
\end{equation}
\end{theorem}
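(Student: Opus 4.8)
The plan is to follow the classical shape-calculus derivation; the case $\sigma\in(-d,0)$ is exactly \cite[Theorem~6.1]{FFMMM}, so I only need to verify that the argument there extends to the whole range $\sigma\in(-d,\infty)$, the range $\sigma\geq0$ being in fact easier since then $K_\sigma$ and all of its relevant derivatives are bounded on bounded sets and there is no singularity on the diagonal. I would begin with the change of variables $x=\Phi_t(\xi)$, $y=\Phi_t(\eta)$, which recasts the moving-domain integral over the fixed domain $E\times E$:
\begin{equation*}
\nl_\sigma(\Phi_t(E)) = \int_E\int_E \phi\bigl(\Phi_t(\xi)-\Phi_t(\eta)\bigr)\,J_t(\xi)\,J_t(\eta)\,\dd\xi\,\dd\eta, \qquad \phi(z)\defeq K_\sigma(|z|),\quad J_t\defeq\det D\Phi_t .
\end{equation*}
This is legitimate because $\Phi_t$ is a diffeomorphism with $J_t>0$ for $|t|$ small. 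From \eqref{eq:flow} and Jacobi's formula one has, at $t=0$, $\partial_t\Phi_t=X$, $\partial_t^2\Phi_t=(DX)X$, $\partial_t J_t=\dive X$ and $\partial_t^2 J_t=(\dive X)^2+\nabla(\dive X)\cdot X$, while $\nabla\phi(z)=\sigma|z|^{\sigma-2}z$ and its Hessian are explicit; substituting the Taylor expansions gives an expansion of the integrand that is dominated on $E\times E$, because the singularities of $\nabla\phi$ and $D^2\phi$ on the diagonal are compensated by the vanishing of the differences $X(\xi)-X(\eta)$ there, so that every resulting integral converges for $\sigma>-d$.

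For the first variation I would differentiate once; using that $\phi$ is even (so $\nabla\phi$ is odd) one may symmetrize in $\xi\leftrightarrow\eta$ and collapse all terms into
\begin{equation*}
\delta\nl_\sigma(E)[X] = 2\int_E\dive_\xi\!\Bigl(X(\xi)\int_E\phi(\xi-\eta)\,\dd\eta\Bigr)\dd\xi ,
\end{equation*}
so that the divergence theorem yields \eqref{eq:Ivar} with $H_{\sigma,\partial E}(x)=2\int_E\phi(x-y)\,\dd y$ (i.e.\ $\tfrac12 H_{\sigma,\partial E}$ is the trace on $\partial E$ of the Riesz-type potential of $\chi_E$).

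For the second variation, differentiating twice produces a sum of double integrals over $E\times E$ whose integrands are built, up to the $\xi\leftrightarrow\eta$ symmetry and explicit constant coefficients, from $D^2\phi(\xi-\eta)[X(\xi)-X(\eta),\,X(\xi)-X(\eta)]$, from $\nabla\phi(\xi-\eta)\cdot\bigl((DX)X(\xi)-(DX)X(\eta)\bigr)$, from $\nabla\phi(\xi-\eta)\cdot(X(\xi)-X(\eta))\,(\dive X(\xi)+\dive X(\eta))$, and from $\phi(\xi-\eta)$ times the second-order Jacobian terms $(\dive X)^2+\nabla(\dive X)\cdot X$ and $\dive X(\xi)\,\dive X(\eta)$. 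I would symmetrize in $\xi\leftrightarrow\eta$ and then perform a nonlocal integration by parts, moving the $\xi$- and $\eta$-derivatives that fall on $\phi$ onto the smooth factors; this produces boundary integrals over $\partial E$ and over $\partial E\times\partial E$, and after a long but elementary reorganization the bulk contributions should cancel, leaving exactly the three terms of \eqref{eq:IIvar}: the genuinely nonlocal term $-\int_{\partial E}\int_{\partial E}K_\sigma(|x-y|)|\zeta(x)-\zeta(y)|^2\,\dd\Hd_x\dd\Hd_y$, which is the finite combination into which the individually more singular pieces assemble; the ``squared nonlocal second fundamental form'' term $\int_{\partial E}c^2_{\sigma,\partial E}\zeta^2\,\dd\Hd$ with $c^2_{\sigma,\partial E}$ as in \eqref{eq:nlcurvature}, arising from the differentiation of the unit normal; and $\int_{\partial E}H_{\sigma,\partial E}\bigl((\dive X)\zeta-\dive_\tau(\zeta X_\tau)\bigr)\dd\Hd$, collecting the transport contributions, with $\dive_\tau(\zeta X_\tau)$ absorbing the tangential part of $X$.

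The hard part will be precisely this last reorganization: carrying out the nonlocal integration by parts without error and recognizing that the dangerous terms recombine into the difference-quotient structure $|\zeta(x)-\zeta(y)|^2$ and into $c^2_{\sigma,\partial E}$ --- this is the computational core of \cite{FFMMM} for $\sigma<0$. For $\sigma\geq0$ there is no obstacle, since $\phi$, $\nabla\phi$, $D^2\phi$ are bounded on bounded sets, every integral converges absolutely and differentiation under the integral sign is automatic; hence \eqref{eq:Ivar}--\eqref{eq:IIvar} hold for all $\sigma\in(-d,\infty)$. A cleaner shortcut is available when $\sigma$ is not too negative: since $\{\Phi_t\}$ is a one-parameter group, $\tfrac{\dd}{\dd t}\nl_\sigma(\Phi_t(E))$ equals the first variation of $\nl_\sigma$ at the ($C^2$) set $\Phi_t(E)$ along $X$, hence equals $\int_{\partial\Phi_t(E)}H_{\sigma,\partial\Phi_t(E)}(X\cdot\nu_{\Phi_t(E)})\,\dd\Hd$ by \eqref{eq:Ivar}, and one differentiates this at $t=0$ using the standard transport identities for the area element $\dd\Hd$, for $\nu_{\Phi_t(E)}$, and for the potential $x\mapsto\int_{\Phi_t(E)}\phi(x-y)\,\dd y$; the intermediate boundary potentials appearing this way converge only for $\sigma>1-d$, which is why for the full range one should still pass through the domain integrals as above.
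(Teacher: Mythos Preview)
Your proposal is correct and matches the paper's own treatment: the paper does not prove this theorem but simply cites \cite[Theorem~6.1]{FFMMM} for the case $\sigma\in(-d,0)$ and remarks that ``it is straightforward to check that the proof extends to $\sigma\geq0$.'' Your sketch of the shape-calculus derivation (change of variables to a fixed domain, differentiate, symmetrize, integrate by parts) is exactly the FFMMM argument, and your observation that the case $\sigma\geq0$ is easier because $K_\sigma$ and its derivatives are bounded on bounded sets is precisely the content of that ``straightforward'' extension.
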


\begin{remark}\label{rmk:var}
Notice that, if $E$ is a volume-constrained stationary set for $\nl_\sigma$ and $X$ induces a volume-preserving flow on $E$, then the last integral in the formula \eqref{eq:IIvar} of the second variation vanishes (see \cite[Remark~6.2]{FFMMM}).
\end{remark}

Obviously the unit ball $B_1$ is a volume-constrained stationary set for $\en_\gamma$, since $H_{\sigma,\partial B_1}$ is constant on $\partial B_1$ for all $\sigma$, and if $X$ induces a volume-preserving flow on $B_1$ then $\int_{\partial B_1}\zeta\dd\Hd=0$.
With the purpose of studying the stability of $B_1$, and in view of \eqref{eq:IIvar}, we introduce the quadratic form
\begin{equation} \label{eq:quad_form}
\begin{split}
\mathcal{Q}\en_\gamma & (u) \defeq
- \int_{\partial B_1}\int_{\partial B_1} \frac{|u(x)-u(y)|^2}{|x-y|^\alpha}\dd\Hd_x\dd\Hd_y + c^2_{-\alpha,\partial B_1}\int_{\partial B_1} u^2\dd\Hd \\
& - \gamma \int_{\partial B_1}\int_{\partial B_1} |x-y|^\beta |u(x)-u(y)|^2\dd\Hd_x\dd\Hd_y + \gamma c^2_{\beta,\partial B_1} \int_{\partial B_1} u^2\dd\Hd,
\end{split}
\end{equation} 
so that if $X\in C^\infty_{\mathrm c}(\R^d;\R^d)$ induces a volume-preserving flow, then by Theorem~\ref{thm:var} and Remark~\ref{rmk:var} the second variation of $\en_\gamma$ at $B_1$ with respect to $X$ is
\begin{equation}\label{eq:quadform-2}
\delta^2\en_\gamma(B_1)[X] = \mathcal{Q}\en_\gamma (X\cdot\nu_{B_1}).
\end{equation}

\begin{remark}
The following expression for the constant $c^2_{\sigma,\partial B_1}$ will be useful later on:
\begin{equation} \label{eq:nlcurvature_ball}
c^2_{\sigma,\partial B_1}(x) = \frac{(d+\sigma)(2d+\sigma)}{d\omega_d}\,\nl_\sigma(B_1), \qquad\text{for all } x\in\partial B_1.
\end{equation}
To prove \eqref{eq:nlcurvature_ball}, we observe that by using the divergence theorem we have for $x\in\partial B_1$
\begin{align*}
c^2_{\sigma,\partial B_1}(x)
% & = \int_{\partial B_1}K_\sigma(|x-y|)|x-y|^2\dd\Hd_y \\
& = \frac{1}{d\omega_d}\int_{\partial B_1}\int_{\partial B_1}K_\sigma(|x-y|)|x-y|^2\dd\Hd_x\dd\Hd_y \\
& = \frac{2}{d\omega_d}\int_{\partial B_1}\int_{\partial B_1}K_\sigma(|x-y|)(x-y)\cdot x \dd\Hd_x\dd\Hd_y \\
& = \frac{2}{d\omega_d}\int_{\partial B_1}\int_{B_1} \dive_{x}\Bigl( K_\sigma(|x-y|)(x-y)\Bigr) \dd x\dd\Hd_y \\
& = \frac{2(\sigma+d)}{d\omega_d}\int_{\partial B_1}\int_{B_1} K_\sigma(|x-y|) \dd x\dd\Hd_y \\
& = \frac{\sigma+d}{d\omega_d} \frac{\dd}{\dd r}\nl_\sigma(B_r)\Big|_{r=1}
= \frac{\sigma+d}{d\omega_d} \frac{\dd}{\dd r}\Bigl(r^{2d+\sigma}\nl_\sigma(B_1)\Bigr)\Big|_{r=1}
= \frac{(d+\sigma)(2d+\sigma)}{d\omega_d}\,\nl_\sigma(B_1).
\end{align*}
\end{remark}

%%%%%%%%%%%%%%%%%%%%%%%%%%%%%%%%%%%%%%%%%%%%%%%%%%%%%

\subsection{Decomposition in spherical harmonics} \label{subsect:spherharm}
The quadratic form $\mathcal{Q}\en_\gamma(u)$ can be conveniently expressed in terms of the Fourier decomposition of $u$ with respect to the orthonormal basis of spherical harmonics. We denote by $\mathcal{S}_k$ the finite dimensional subspace of spherical harmonics of degree $k\in\N\cup\{0\}$, and by $\{Y_k^i\}_{i=1}^{d(k)}$ an orthonormal basis (of dimension $d(k)$) for $\mathcal{S}_k$ in $L^2(\partial B_1)$ (see for instance \cite{Gro}). For $u\in L^2(\partial B_1)$, we let
\begin{equation} \label{eq:fourier}
a^i_k(u)\defeq \int_{\partial B_1} u \, Y^i_k \dd\Hd
\end{equation}
be the Fourier coefficient of $u$ corresponding to $Y^i_k$. We have
\begin{equation} \label{eq:L2norm}
\|u\|_{L^2(\partial B_1)}^2 = \sum_{k=0}^\infty\sum_{i=1}^{d(k)} \bigl(a^i_k(u)\bigr)^2.
\end{equation}
We can also express the seminorm $\int_{\partial B_1}\int_{\partial B_1} K_\sigma(|x-y|)|u(x)-u(y)|^2\dd\Hd_x\dd\Hd_y$ in terms of the Fourier coefficients of $u$, as shown in the following proposition.

\begin{proposition}\label{prop:seminorm}
For $\sigma>-(d-1)$ we have
\begin{equation} \label{eq:seminorm}
\int_{\partial B_1}\int_{\partial B_1} K_\sigma(|x-y|)|u(x)-u(y)|^2\dd\Hd_x\dd\Hd_y = \sum_{k=0}^\infty\sum_{i=1}^{d(k)} \mu_k(\sigma) \bigl(a^i_k(u)\bigr)^2,
\end{equation}
 where the sequence $(\mu_k(\sigma))_k$, for $\sigma\in(-(d-1),\infty)$, is defined as
\begin{equation} \label{eq:mu}
\mu_{k}(\sigma) \defeq (d-1)\omega_{d-1}2^{d-1+\sigma} \, \frac{\Gamma\left(\frac{d-1+\sigma}{2}\right)\Gamma\left(\frac{d-1}{2}\right)}{\Gamma\left(\frac{2d-2+\sigma}{2}\right)}\Biggl[1-\prod_{j=0}^{k-1}\frac{j-\frac{\sigma}{2}}{j+d-1+\frac{\sigma}{2}}\Biggr]
\end{equation}
for $k\geq1$, and $\mu_0(\sigma)=0$.
\end{proposition}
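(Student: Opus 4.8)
The plan is to reduce the double surface integral to a one-dimensional computation by exploiting the rotational symmetry of the kernel, and then to identify the resulting eigenvalues of the associated integral operator on $\partial B_1$ via the Funk--Hecke theorem. First I would expand the square, writing
\[
\int_{\partial B_1}\int_{\partial B_1} K_\sigma(|x-y|)|u(x)-u(y)|^2\dd\Hd_x\dd\Hd_y = 2\int_{\partial B_1} u(x)^2 \Bigl(\int_{\partial B_1} K_\sigma(|x-y|)\dd\Hd_y\Bigr)\dd\Hd_x - 2\int_{\partial B_1}\int_{\partial B_1} K_\sigma(|x-y|)u(x)u(y)\dd\Hd_x\dd\Hd_y.
\]
The inner integral in the first term is a constant $c_\sigma = \int_{\partial B_1} K_\sigma(|x-y|)\dd\Hd_y$ independent of $x\in\partial B_1$ by symmetry, so that term equals $2c_\sigma\|u\|_{L^2(\partial B_1)}^2 = 2c_\sigma\sum_{k,i}(a_k^i(u))^2$ by \eqref{eq:L2norm}. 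For the bilinear term, since $|x-y|^2 = 2(1-x\cdot y)$ for $x,y\in\partial B_1$, the kernel $K_\sigma(|x-y|) = (2-2x\cdot y)^{\sigma/2}$ depends only on $x\cdot y$; I would then invoke the Funk--Hecke formula, which states that for such a zonal kernel $g(x\cdot y)$ one has $\int_{\partial B_1} g(x\cdot y) Y_k^i(y)\dd\Hd_y = \lambda_k(\sigma) Y_k^i(x)$ with $\lambda_k(\sigma) = \Hd(\s^{d-2})\int_{-1}^1 g(t)\, \frac{P_k(t;d)}{P_k(1;d)}(1-t^2)^{(d-3)/2}\dd t$, where $P_k(\cdot;d)$ are the Gegenbauer/Legendre polynomials of degree $k$ associated to dimension $d$. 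Diagonalizing in the spherical harmonics basis then gives $\int\int K_\sigma(|x-y|)u(x)u(y) = \sum_{k,i}\lambda_k(\sigma)(a_k^i(u))^2$, and hence the left-hand side of \eqref{eq:seminorm} equals $\sum_{k,i}\bigl(2c_\sigma - 2\lambda_k(\sigma)\bigr)(a_k^i(u))^2$, so that $\mu_k(\sigma) = 2c_\sigma - 2\lambda_k(\sigma)$. In particular $\mu_0(\sigma) = 2c_\sigma - 2\lambda_0(\sigma) = 0$ because $Y_0$ is constant and $c_\sigma = \lambda_0(\sigma)$, which matches the claim; this also confirms that the constant $2c_\sigma$ equals $2\lambda_0(\sigma)$, i.e.\ the prefactor in \eqref{eq:mu}.

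The core of the work is then the explicit evaluation of $\lambda_k(\sigma)$ for $k\geq 1$, which amounts to computing the integrals $\int_{-1}^1 (2-2t)^{\sigma/2} C_k^\nu(t)(1-t^2)^{\nu-1/2}\dd t$ (with $\nu = (d-2)/2$ and $C_k^\nu$ the Gegenbauer polynomials) against the normalization $C_k^\nu(1)$. I would substitute $t = 1 - 2s$ to turn this into a Beta-type integral on $[0,1]$ and use the Rodrigues formula or a known closed-form evaluation of $\int_0^1 s^{\sigma/2}(1-s)^{\nu-1/2}C_k^\nu(1-2s)\dd s$ in terms of ratios of $\Gamma$ functions; the result of such a computation is classical and yields a product formula of the shape $\lambda_k(\sigma) = \lambda_0(\sigma)\prod_{j=0}^{k-1}\frac{j-\sigma/2}{j+d-1+\sigma/2}$, which when inserted into $\mu_k(\sigma) = 2\lambda_0(\sigma) - 2\lambda_k(\sigma)$ produces exactly \eqref{eq:mu} after identifying $\lambda_0(\sigma) = c_\sigma$ with the prefactor $(d-1)\omega_{d-1}2^{d-2+\sigma}\frac{\Gamma(\frac{d-1+\sigma}{2})\Gamma(\frac{d-1}{2})}{\Gamma(\frac{2d-2+\sigma}{2})}$ via the Beta integral $\int_{\partial B_1}(2-2x\cdot y)^{\sigma/2}\dd\Hd_y$. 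The convergence restriction $\sigma > -(d-1)$ appears precisely as the integrability condition ensuring $c_\sigma < \infty$, since near $t = 1$ the integrand $(1-t)^{\sigma/2}(1-t^2)^{(d-3)/2}$ behaves like $(1-t)^{\sigma/2 + (d-3)/2}$, which is integrable iff $\sigma/2 + (d-3)/2 > -1$, i.e.\ $\sigma > -(d-1)$.

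The main obstacle I anticipate is the bookkeeping in the Gegenbauer integral: matching the normalization constants and the product over $j$ to the exact form in \eqref{eq:mu} requires care with the dimension-dependent conventions for spherical harmonics and the precise statement of Funk--Hecke, and one must verify that the naive term-by-term diagonalization is justified (i.e.\ that the operator $u\mapsto \int_{\partial B_1}K_\sigma(|\cdot-y|)u(y)\dd\Hd_y$ is bounded on $L^2(\partial B_1)$ and the series converges), which follows from $\sigma > -(d-1)$ by a weak-Young estimate on the sphere. An alternative, perhaps cleaner route that avoids citing Funk--Hecke in full generality is to compute $\mu_k(\sigma)$ recursively: establish the prefactor $\mu_1(\sigma)$ or $\lambda_0(\sigma)$ directly by an elementary integral, and then derive a three-term recurrence for $\lambda_k(\sigma)$ from the recurrence relation for Gegenbauer polynomials, solving it to obtain the product formula; I would present whichever of the two is shorter, but in either case the analytic input is the same classical special-function identity.
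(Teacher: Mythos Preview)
Your proposal is correct and complete in outline. Note, however, that the paper does not actually prove this proposition: it simply cites \cite[(7.5) and (7.12)]{FFMMM} for the case $\sigma\in(-(d-1),0)$ and \cite[Lemma~4.3, Corollary~3.6, Lemma~3.7]{Asc22} for $\sigma>0$. Your Funk--Hecke argument---expanding the square, diagonalizing the zonal kernel $(2-2x\cdot y)^{\sigma/2}$ in the spherical-harmonics basis, and evaluating the eigenvalues via the Gegenbauer integral to obtain the product formula---is precisely the method carried out in those references, so you have reconstructed the standard proof rather than found an alternative route.
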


\begin{proof}
The formula is derived in \cite[(7.5) and (7.12)]{FFMMM} in the case $\sigma\in(-(d-1),0)$, and in \cite[Lemma~4.3, Corollary~3.6, and Lemma~3.7]{Asc22} in the case $\sigma>0$.
\end{proof}

\begin{remark} \label{rmk:alphalarge}
Also in the case $\sigma\in(-d,-(d-1)]$ a representation formula as in Proposition~\ref{prop:seminorm} holds true, but in this case the expression of the coefficients $\mu_k(\sigma)$ is different: see \cite[equations~(7.4), (7.6)]{FFMMM}.
\end{remark}

 We collect in the following lemma some useful properties of the sequence $(\mu_k(\sigma))_k$, defined by \eqref{eq:mu}.

\begin{lemma} [Properties of $(\mu_k(\sigma))_k$] \label{lemma:muk}
Let $\sigma>-(d-1)$. Then:
\begin{enumerate}
\item If $\sigma<0$, then $\mu_{k+1}(\sigma)>\mu_k(\sigma)$ for all $k$, and in particular
\begin{equation} \label{eq:muk-1}
\mu_k(\sigma) - \mu_1(\sigma) \geq \mu_2(\sigma)-\mu_1(\sigma) = -\frac{\sigma \mu_1(\sigma)}{2d+\sigma}>0 \qquad\text{for all }k\geq2.
\end{equation}
\item If $\sigma>0$, then $\displaystyle\max_{k\geq1}\mu_{k}(\sigma)=\mu_1(\sigma)$ and
\begin{equation} \label{eq:muk-2}
\mu_1(\sigma) - \mu_k(\sigma) \geq C_{d,\sigma}>0 \qquad\text{for all }k\geq2,
\end{equation}
for some constant $C_{d,\sigma}$ depending only on $d$ and $\sigma$.
\end{enumerate}
Furthermore, the sequence $(\mu_k(\sigma))_k$ is bounded, with
\begin{equation} \label{eq:muk-limit}
\lim_{k\to\infty}\mu_k(\sigma) =  (d-1)\omega_{d-1}2^{d-1+\sigma} \, \frac{\Gamma\left(\frac{d-1+\sigma}{2}\right)\Gamma\left(\frac{d-1}{2}\right)}{\Gamma\left(\frac{2d-2+\sigma}{2}\right)}.
\end{equation}
Finally, one has
\begin{equation} \label{eq:mu1}
\mu_1(\sigma)= c^2_{\sigma,\partial B_1}
\end{equation}
where $c^2_{\sigma,\partial B_1}$ is the constant in \eqref{eq:nlcurvature_ball}.
\end{lemma}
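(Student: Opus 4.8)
The plan is to work directly from the closed-form expression \eqref{eq:mu}, writing $\mu_k(\sigma) = A_{d,\sigma}\bigl(1 - P_k(\sigma)\bigr)$ for $k\geq 1$, where $A_{d,\sigma} = (d-1)\omega_{d-1}2^{d-1+\sigma}\frac{\Gamma(\frac{d-1+\sigma}{2})\Gamma(\frac{d-1}{2})}{\Gamma(\frac{2d-2+\sigma}{2})} > 0$ and $P_k(\sigma) = \prod_{j=0}^{k-1}\frac{j-\sigma/2}{j+d-1+\sigma/2}$ (with $P_1 = \frac{-\sigma/2}{d-1+\sigma/2}$). The ratio of consecutive products is $\frac{P_{k+1}}{P_k} = \frac{k-\sigma/2}{k+d-1+\sigma/2}$, which for $k\geq 1$ lies in $(0,1)$ when $\sigma > -(d-1)$ (both numerator and denominator are positive since $k \geq 1 > \sigma/2$ would need $\sigma < 2$; more carefully, for $k\geq 1$ the numerator $k - \sigma/2$ is positive iff $\sigma < 2k$, so I should treat the sign of $P_k$ with care — but note $P_1 < 0$ when $\sigma > 0$ and $P_1 > 0$ when $\sigma < 0$). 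I would organize the argument around the sign and magnitude of $P_k$.

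For part (i), $\sigma < 0$: here $-\sigma/2 > 0$, so $P_1 > 0$, and each subsequent factor $\frac{k-\sigma/2}{k+d-1+\sigma/2}$ is strictly between $0$ and $1$ (numerator positive and smaller than denominator since $d-1+\sigma > 0$), hence $(P_k)_k$ is strictly decreasing and positive, giving $\mu_{k+1}(\sigma) > \mu_k(\sigma)$ for all $k\geq 1$; that $\mu_1 > \mu_0 = 0$ is immediate since $P_1 \in (0,1)$. The identity in \eqref{eq:muk-1} follows by computing $\mu_2 - \mu_1 = A_{d,\sigma}(P_1 - P_2) = A_{d,\sigma}P_1\bigl(1 - \frac{-\sigma/2}{d+\sigma/2}\bigr) = A_{d,\sigma}P_1\cdot\frac{d+\sigma}{d+\sigma/2}$, then using $\mu_1 = A_{d,\sigma}P_1 \cdot \frac{\text{(something)}}{\cdots}$; a short rearrangement using $\mu_1(\sigma) = A_{d,\sigma}(1-P_1)$ and the relation between $A_{d,\sigma}P_1$ and $\mu_1(\sigma)$ produces the stated value $-\frac{\sigma\mu_1(\sigma)}{2d+\sigma}$, and monotonicity gives $\mu_k - \mu_1 \geq \mu_2 - \mu_1$ for $k\geq 2$.

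For part (ii), $\sigma > 0$: now $P_1 = \frac{-\sigma/2}{d-1+\sigma/2} < 0$. For $k = 1$, $P_1 < 0$ so $\mu_1 = A_{d,\sigma}(1-P_1) > A_{d,\sigma}$. For $k\geq 2$, the additional factors $\frac{j-\sigma/2}{j+d-1+\sigma/2}$ for $j=1,\dots,k-1$: when $j > \sigma/2$ these are positive and $<1$; when $j \leq \sigma/2$ they can be negative or zero. In any case I claim $|P_k| \leq |P_1|\cdot\prod_{j=1}^{k-1}\bigl|\frac{j-\sigma/2}{j+d-1+\sigma/2}\bigr| \leq |P_1| < 1$ since each factor has modulus $\leq 1$ (numerator modulus $\leq$ denominator, as $|j-\sigma/2| \leq j + \sigma/2 \leq j+d-1+\sigma/2$), so $P_k \in [-|P_1|, |P_1|] \subset (-1,1)$ and thus $\mu_k \leq A_{d,\sigma}(1+|P_1|)$; but to get $\mu_k \leq \mu_1$ I need $P_k \geq P_1$, i.e. $P_k \geq -|P_1|$ with the right sign tracking. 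Here the cleanest route is: for $k\geq 2$, $P_k = P_1 \cdot \frac{1-\sigma/2}{d+\sigma/2} \cdot (\text{rest})$; the factor $\frac{1-\sigma/2}{d+\sigma/2} < 1$ always, and the product of the remaining factors has modulus $\leq 1$, so $|P_k| \leq |P_1|\cdot\bigl|\frac{1-\sigma/2}{d+\sigma/2}\bigr| =: \delta |P_1|$ with $\delta < 1$ strictly (unless $\sigma=2$, in which case $P_k=0$ for $k\geq2$); hence $\mu_1 - \mu_k = A_{d,\sigma}(P_k - P_1) \geq A_{d,\sigma}(-\delta|P_1| - P_1) = A_{d,\sigma}|P_1|(1-\delta) =: C_{d,\sigma} > 0$, using $P_1 = -|P_1|$. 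This gives both $\max_{k\geq1}\mu_k = \mu_1$ and the uniform gap \eqref{eq:muk-2}. The boundedness and limit \eqref{eq:muk-limit} follow since $P_k \to 0$ (the factors tend to $1$ but are eventually bounded away from $1$... actually $\frac{j-\sigma/2}{j+d-1+\sigma/2} \to 1$, so I need $\sum \log$ of these diverges to $-\infty$: indeed $1 - \frac{j-\sigma/2}{j+d-1+\sigma/2} = \frac{d-1+\sigma}{j+d-1+\sigma/2} \sim \frac{d-1+\sigma}{j}$, whose sum diverges, so $P_k \to 0$), hence $\mu_k \to A_{d,\sigma}$, which is exactly \eqref{eq:muk-limit}. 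Finally \eqref{eq:mu1} is obtained by evaluating $\mu_1(\sigma) = A_{d,\sigma}\bigl(1 - \frac{-\sigma/2}{d-1+\sigma/2}\bigr) = A_{d,\sigma}\cdot\frac{d-1+\sigma}{d-1+\sigma/2}$ and matching with the closed form \eqref{eq:nlcurvature_ball} for $c^2_{\sigma,\partial B_1}$ via the Gamma-function identities (alternatively, and more conceptually, by comparing \eqref{eq:seminorm} applied to $u = \nu_{B_1}\cdot e$ with the definition of $c^2_{\sigma,\partial B_1}$: a linear coordinate function is a spherical harmonic of degree $1$, so only the $k=1$ term survives, and unwinding the definitions yields $\mu_1(\sigma) = c^2_{\sigma,\partial B_1}$).

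The main obstacle will be part (ii): the products $P_k$ for $\sigma > 0$ are genuinely signed and the individual factors $\frac{j-\sigma/2}{j+d-1+\sigma/2}$ are not uniformly bounded away from $1$, so establishing the \emph{uniform} (in $k$) positive gap $C_{d,\sigma}$ in \eqref{eq:muk-2} requires the telescoping observation that it suffices to control $|P_k| \leq \delta |P_1|$ with $\delta = \bigl|\frac{1-\sigma/2}{d+\sigma/2}\bigr| < 1$ using only the single ``extra'' factor at $j=1$ and the fact that all later factors have modulus at most $1$ — together with the separate sanity check that this argument is not vacuous, which would need the convention $\prod_{\emptyset} = 1$ and a direct treatment of $\sigma = 2$. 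I expect the Gamma-function bookkeeping in \eqref{eq:mu1} and \eqref{eq:muk-limit} to be routine once the duplication formula $\Gamma(s)\Gamma(s+\tfrac12) = 2^{1-2s}\sqrt{\pi}\,\Gamma(2s)$ and the expression for $\omega_{d-1}$ in terms of Gamma functions are invoked.
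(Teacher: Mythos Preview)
Your proposal is correct and follows essentially the same route as the paper: both work directly from the closed form \eqref{eq:mu}, writing $\mu_k(\sigma)=A_{d,\sigma}(1-P_k(\sigma))$ and analyzing the product $P_k$. The paper computes $\mu_{k+1}-\mu_k$ as a single fraction and reads off the sign, while you track the ratio $P_{k+1}/P_k$; these are equivalent. Your verification of the identity $\mu_2-\mu_1=-\frac{\sigma\mu_1}{2d+\sigma}$ and of the limit \eqref{eq:muk-limit} via $\sum_j\frac{d-1+\sigma}{j+d-1+\sigma/2}=\infty$ matches the paper's.

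The one substantive difference is part~(ii): the paper simply cites \cite[Proposition~3.8]{Asc22} for the uniform gap \eqref{eq:muk-2}, whereas you supply a self-contained argument. Your trick---extract the single factor $\bigl|\frac{1-\sigma/2}{d+\sigma/2}\bigr|=\delta<1$ from $|P_k|$ for $k\geq2$ and bound all remaining factors $\bigl|\frac{j-\sigma/2}{j+d-1+\sigma/2}\bigr|\leq1$ for $j\geq2$---is clean and yields the explicit constant $C_{d,\sigma}=A_{d,\sigma}|P_1|(1-\delta)$. This is arguably nicer than deferring to a reference. Likewise for \eqref{eq:mu1} the paper cites \cite{Asc22,FFMMM}, while your conceptual argument (apply \eqref{eq:seminorm} to the coordinate functions $u=x_i$, which are degree-one spherical harmonics, and sum over $i$ to recover $c^2_{\sigma,\partial B_1}$ from its definition) is a genuine, short alternative to the Gamma-function bookkeeping.
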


\begin{proof}
The behaviour of the sequence $(\mu_k)_k$ is studied in detail in \cite{Asc22} for $\sigma>0$ and in \cite{FFMMM} for $\sigma<0$. We only show for completeness the properties that are not explicitly proved in those papers.

From the definition \eqref{eq:mu} of $\mu_k(\sigma)$ and a straightforward calculation one has
\begin{equation*}
\mu_{k+1}(\sigma)-\mu_k(\sigma) = 
(d-1)\omega_{d-1}2^{d-1+\sigma} \, \frac{\Gamma\left(\frac{d-1+\sigma}{2}\right)\Gamma\left(\frac{d-1}{2}\right)}{\Gamma\left(\frac{2d-2+\sigma}{2}\right)} \cdot  \frac{(d-1+\sigma)\prod_{j=0}^{k-1}(j-\frac{\sigma}{2})}{\prod_{j=0}^{k}(j+d-1+\frac{\sigma}{2})} \,,
\end{equation*}
from which it follows immediately that if $\sigma\in(-(d-1),0)$ the sequence $\mu_k(\sigma)$ is monotone increasing; equation \eqref{eq:muk-1} is an immediate consequence. In the case $\sigma>0$, \eqref{eq:muk-2} is proved in \cite[Proposition~3.8]{Asc22}.
The limit value in \eqref{eq:muk-limit} can be easily computed from \eqref{eq:mu} by observing that
\begin{equation} \label{eq:limit_prod}
\prod_{j=0}^{k-1}\frac{j-\frac{\sigma}{2}}{j+d-1+\frac{\sigma}{2}}\longrightarrow0 \qquad\text{as }k\to\infty.
\end{equation}
Finally, formula \eqref{eq:mu1} is proved in \cite[Lemma~4.4]{Asc22} in the case $\sigma>0$ and in \cite[Proposition~7.5]{FFMMM} in the case $\sigma<0$.
\end{proof}

%%%%%%%%%%%%%%%%%%%%%%%%%%%%%%%%%%%%%%%%%%%%%%%%%%%%%

\subsection{The stability thresholds} \label{subsect:stab}
In view of \eqref{eq:L2norm}, \eqref{eq:seminorm}, and \eqref{eq:mu1}, we have that the quadratic form \eqref{eq:quad_form} can be expressed in terms of the Fourier coefficients of $u\in L^2(\partial B_1)$ as 
\begin{equation} \label{eq:IIvar_spherical_harm}
\mathcal{Q}\en_\gamma(u) = \sum_{k=0}^\infty \sum_{i=1}^{d(k)} \Bigl[ \bigl(\mu_1(-\alpha)-\mu_k(-\alpha)\bigr) + \gamma\bigl(\mu_1(\beta)-\mu_k(\beta) \bigr)\Bigr]\bigl(a^i_k(u)\bigr)^2.
\end{equation}
In the following theorem, which is the main result of this section, we address the issue of the stability of the unit ball $B_1$ for the family of functionals $\en_\gamma$ and $-\en_\gamma$. 

\begin{theorem}[Stability of $B_1$] \label{thm:stability}
Let $\alpha\in(0,d-1)$ and $\beta>0$, and let
\begin{align}
\gamma_* &= \gamma_*(d,\alpha,\beta) \defeq \sup_{k\geq 2}\;\frac{\mu_k(-\alpha)-\mu_1(-\alpha)}{\mu_1(\beta)-\mu_k(\beta)}\,, \label{eq:gammatilde*}\\
\gamma_{**} &= \gamma_{**}(d,\alpha,\beta) \defeq \inf_{k\geq 2}\;\frac{\mu_k(-\alpha)-\mu_1(-\alpha)}{\mu_1(\beta)-\mu_k(\beta)}\,. \label{eq:gammatilde**}
\end{align}
Then $0<\gamma_{**}<\gamma_{*}<\infty$ and the following holds:
\begin{enumerate}
\item the unit ball is a volume-constrained stable set for $\en_\gamma$ if and only if $\gamma\geq\gamma_*$;
\item the unit ball is a volume-constrained stable set for $-\en_\gamma$ if and only if $\gamma\leq\gamma_{**}$.
\end{enumerate}
Moreover, one has the explicit values
\begin{equation} \label{eq:gamma*}
\gamma_* =
\begin{cases}\displaystyle
\displaystyle \frac{\alpha(d-\alpha)(2d+2+\beta)}{\beta(d+\beta)(2d+2-\alpha)}\cdot\frac{\nla(B_1)}{\nlb(B_1)} & \text{if }\beta\geq \beta_*, \\[3ex]
\displaystyle \frac{\alpha(d-\alpha)(2d-\alpha)(d-1+\beta)}{\beta(d+\beta)(2d+\beta)(d-1-\alpha)}\cdot\frac{\nla(B_1)}{\nlb(B_1)} & \text{if }\beta<\beta_*,
\end{cases}
\end{equation}
%where
%\begin{equation} \label{eq:beta*}
%\beta_*=\beta_*(d,\alpha)\defeq \frac{6d+2+\alpha(d-1)}{d-1-\alpha}\,,
%\end{equation}
and
\begin{equation} \label{eq:gamma**}
\gamma_{**} = \frac{\alpha(d-\alpha)}{\beta(d+\beta)}\cdot \frac{\nla(B_1)}{\nlb(B_1)} \,,
\end{equation}
where $\beta_*$ is defined in \eqref{eq:beta*}.
\end{theorem}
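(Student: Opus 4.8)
The proof reduces, via formula \eqref{eq:IIvar_spherical_harm}, to a purely finite-dimensional analysis of the sign of the coefficients
\[
\lambda_k(\gamma) \defeq \bigl(\mu_1(-\alpha)-\mu_k(-\alpha)\bigr) + \gamma\bigl(\mu_1(\beta)-\mu_k(\beta)\bigr), \qquad k\geq 1,
\]
noting that $\lambda_0(\gamma)=\lambda_1(\gamma)=0$ always, and that the mode $k=1$ corresponds precisely to translations, which are compatible with the volume constraint. The key sign observations, supplied by Lemma~\ref{lemma:muk}, are that $\mu_k(-\alpha)-\mu_1(-\alpha)>0$ for $k\geq 2$ (since $-\alpha<0$ so $(\mu_k(-\alpha))_k$ is increasing) and $\mu_1(\beta)-\mu_k(\beta)>0$ for $k\geq 2$ (since $\beta>0$ so $\mu_1(\beta)$ is the strict maximum). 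Hence each ratio $\rho_k \defeq \frac{\mu_k(-\alpha)-\mu_1(-\alpha)}{\mu_1(\beta)-\mu_k(\beta)}$ is a well-defined positive number for $k\geq 2$, and $\lambda_k(\gamma)\geq 0$ for all such $k$ if and only if $\gamma\geq\rho_k$, while $\lambda_k(\gamma)\leq 0$ for all such $k$ if and only if $\gamma\leq\rho_k$. Taking suprema and infima over $k\geq 2$ gives immediately the equivalences in (i) and (ii), once one checks that the extremal values $\gamma_*=\sup_k\rho_k$ and $\gamma_{**}=\inf_k\rho_k$ are attained (or at least that the stated strict/nonstrict inequalities are correct at the threshold): this is where the asymptotics matter.

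\textbf{Finiteness of $\gamma_*$ and positivity of $\gamma_{**}$.} To show $0<\gamma_{**}\leq\gamma_*<\infty$ I would use \eqref{eq:muk-limit}: as $k\to\infty$ the numerator $\mu_k(-\alpha)-\mu_1(-\alpha)$ converges to a finite positive limit $L_{-\alpha}-\mu_1(-\alpha)$, and the denominator $\mu_1(\beta)-\mu_k(\beta)$ converges to $\mu_1(\beta)-L_\beta$, which is positive by Lemma~\ref{lemma:muk}(ii). Therefore $\rho_k$ converges to a finite positive limit, and combined with the strict positivity of each $\rho_k$ this forces $\gamma_{**}>0$ and $\gamma_*<\infty$; the strict inequality $\gamma_{**}<\gamma_*$ will follow once the supremum and infimum are shown to be achieved at different indices (this is where the explicit computation below is genuinely needed). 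For the stability dichotomy at the threshold itself, the point is that $\sup_{k}\rho_k$ being attained at some finite $k_*$ means $\lambda_{k_*}(\gamma_*)=0$ while $\lambda_k(\gamma_*)>0$ for $k\neq k_*$, so $B_1$ is stable but not strictly stable at $\gamma=\gamma_*$; symmetrically at $\gamma_{**}$.

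\textbf{The explicit formulas — the main obstacle.} The genuinely hard part is establishing the closed forms \eqref{eq:gamma*} and \eqref{eq:gamma**}. First I would compute $\rho_2$ explicitly: from the difference formula for $\mu_{k+1}(\sigma)-\mu_k(\sigma)$ displayed in the proof of Lemma~\ref{lemma:muk}, one gets $\mu_2(\sigma)-\mu_1(\sigma)=-\frac{\sigma}{2d+\sigma}\mu_1(\sigma)$, and using $\mu_1(\sigma)=c^2_{\sigma,\partial B_1}=\frac{(d+\sigma)(2d+\sigma)}{d\omega_d}\nl_\sigma(B_1)$ from \eqref{eq:mu1}--\eqref{eq:nlcurvature_ball} one obtains
\[
\mu_2(-\alpha)-\mu_1(-\alpha) = \frac{\alpha(d-\alpha)}{d\omega_d}\nla(B_1), \qquad \mu_1(\beta)-\mu_2(\beta) = \frac{\beta(d+\beta)}{d\omega_d}\nlb(B_1),
\]
whose ratio is exactly the claimed $\gamma_{**}$ in \eqref{eq:gamma**}; so the claim is that the infimum is attained at $k=2$, and one must show $\rho_k\geq\rho_2$ for all $k\geq 3$. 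For $\gamma_*$ one similarly computes $\rho_3$ using the telescoped product in \eqref{eq:mu} evaluated at $k=3$ (this yields the factors $(2d+2+\beta)$, $(2d+2-\alpha)$ appearing in the first branch) and $\rho_\infty=\lim_k\rho_k$ from \eqref{eq:muk-limit} (yielding the $(2d-\alpha)(d-1+\beta)$ versus $(2d+\beta)(d-1-\alpha)$ factors in the second branch), and the threshold $\beta_*$ in \eqref{eq:beta*} is precisely the value of $\beta$ at which $\rho_3=\rho_\infty$; one then has to prove that $\max_{k\geq 2}\rho_k$ is always achieved at $k=3$ or in the limit $k\to\infty$, i.e. that the sequence $k\mapsto\rho_k$ (for $k\geq 2$) rises to a peak at $k=3$ (when $\beta\geq\beta_*$) or is eventually monotone increasing to its sup (when $\beta<\beta_*$), ruling out any intermediate maximizer. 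This monotonicity/unimodality analysis of $\rho_k$ — writing $\rho_{k+1}-\rho_k$ or $\rho_{k+1}/\rho_k$ using the product structure of $\mu_k$ and determining the sign as a function of $k$, $\alpha$, $\beta$, $d$ — is the delicate computational heart of the argument, and the case split at $\beta_*$ is exactly the outcome of that sign analysis.
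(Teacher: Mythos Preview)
Your high-level strategy matches the paper's exactly: reduce to the sign of the coefficients $\lambda_k(\gamma)$ in the spherical-harmonics expansion \eqref{eq:IIvar_spherical_harm}, use Lemma~\ref{lemma:muk} for the sign of numerator and denominator in $\rho_k$, get finiteness of $\gamma_*$ and positivity of $\gamma_{**}$ from the limit \eqref{eq:muk-limit}, and then identify the extremal indices. Your computation of $\rho_2$ giving \eqref{eq:gamma**}, and your identification of $\rho_3$ and $\rho_\infty$ as the two candidates for $\gamma_*$ with crossover at $\beta_*$, are all correct and are precisely what the paper does (in Appendix~\ref{sec:appendix}).

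The one place where you diverge, and where there is a potential gap, is the method for showing that the extrema really are $\rho_2$ (infimum) and $\rho_3$ or $\rho_\infty$ (supremum). You propose a monotonicity/unimodality analysis of $k\mapsto\rho_k$ via the sign of $\rho_{k+1}-\rho_k$. The paper does \emph{not} do this, and for good reason: when $\beta\geq\beta_*$ the sequence $(\rho_k)_{k\geq2}$ is in general \emph{not} unimodal --- it can oscillate between even and odd indices (this is why the paper's Step~3 in Lemma~\ref{lem:appendix} treats even and odd $k$ by entirely separate arguments). The paper instead compares each $\rho_k$ directly to the candidate extremum: for the infimum it shows $\rho_k>\rho_2$ for all $k\geq3$ by an induction split according to $\beta\lessgtr2$; for the supremum with $\beta<\beta_*$ it shows $\rho_k<1$ for all $k\geq3$ by an induction on the product inequality $|b_kc_k|\leq a_kd_k$; and for $\beta\geq\beta_*$ it shows $\rho_k\leq1\leq\rho_3$ for $k$ even via a pointwise product estimate, and $\rho_k\leq\rho_3$ for $k$ odd via a separate ``propagation of increase'' argument on an auxiliary sequence. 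So your plan is right up to the last step, but the proposed mechanism for that step (monotonicity in $k$) is likely to fail for large $\beta$; the paper's direct-comparison route is what actually works.

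A minor point: for $\beta<\beta_*$ the supremum $\gamma_*=\rho_\infty$ is \emph{not} attained at any finite $k$, so your sentence ``$\sup_k\rho_k$ being attained at some finite $k_*$'' needs adjusting; nonetheless $B_1$ is still (non-strictly) stable at $\gamma=\gamma_*$ because every $\lambda_k(\gamma_*)>0$ individually, even though $\lambda_k(\gamma_*)\to0$.
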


\begin{proof}
By \eqref{eq:quadform-2}, we have that $B_1$ is a volume-constrained stable set for $\en_\gamma$ if and only if
\begin{equation} \label{eq:stability}
\mathcal{Q}\en_\gamma(u)\geq 0 \qquad \text{for every }u\in C^\infty(\partial B_1) \text{ with }\int_{\partial B_1}u\dd\Hd=0,
\end{equation} 
and similarly $B_1$ is a volume-constrained stable set for $-\en_\gamma$ if and only if
\begin{equation} \label{eq:stability2}
\mathcal{Q}\en_\gamma(u)\leq 0 \qquad \text{for every }u\in C^\infty(\partial B_1) \text{ with }\int_{\partial B_1}u\dd\Hd=0.
\end{equation} 
Indeed, the condition \eqref{eq:stability} implies that $\delta^2\en_\gamma(B_1)[X]\geq0$ for every vector field $X$ inducing a volume-preserving flow, since for every such vector field it must be $\int_{\partial B_1} X\cdot\nu_{B_1}\dd\Hd=0$. The converse implication can be proved by arguing as in \cite[Proof of Theorem~7.1]{FFMMM}.

Hence the goal is to show that \eqref{eq:stability} is equivalent to $\gamma\geq\gamma_*$ and that \eqref{eq:stability2} is equivalent to $\gamma\leq\gamma_{**}$. This is based on the expression \eqref{eq:IIvar_spherical_harm} of $\mathcal{Q}\en_\gamma(u)$ in terms of the Fourier coefficients of $u$. Indeed, notice first that if $\int_{\partial B_1}u\dd\Hd=0$, then $a_0^1(u)=0$ and therefore the sum in \eqref{eq:IIvar_spherical_harm} starts from $k=2$ (as the term in brackets vanishes identically for $k=1$):
\begin{equation} \label{eq:IIvar_spherical_harm-b}
\mathcal{Q}\en_\gamma(u) = \sum_{k=2}^\infty \sum_{i=1}^{d(k)} \Bigl[ \bigl(\mu_1(-\alpha)-\mu_k(-\alpha)\bigr) + \gamma\bigl(\mu_1(\beta)-\mu_k(\beta) \bigr)\Bigr]\bigl(a^i_k(u)\bigr)^2
\end{equation}
for every $u\in L^2(\partial B_1)$ with $\int_{\partial B_1}u\dd\Hd=0$. In view of \eqref{eq:IIvar_spherical_harm-b}, we have that the condition \eqref{eq:stability} (which is equivalent to the stability of $B_1$ for $\en_\gamma$) is satisfied if and only if $\gamma\geq\gamma_{*}$, whereas the condition \eqref{eq:stability2} (which is equivalent to the stability of $B_1$ for $-\en_\gamma$) is satisfied if and only if $\gamma\leq\gamma_{**}$, where $\gamma_{*}$ and $\gamma_{**}$ are defined in \eqref{eq:gammatilde*} and \eqref{eq:gammatilde**} respectively.

Notice that, in view of the properties listed in Lemma~\ref{lemma:muk}, we have $\gamma_{*},\gamma_{**}\in(0,+\infty)$. To complete the proof, it only remains to prove the explicit expression of the thresholds in \eqref{eq:gamma*} and \eqref{eq:gamma**}. We postpone the technical proof of this fact to Appendix~\ref{sec:appendix}, see in particular Corollary~\ref{cor:appendix}.
\end{proof}

\begin{remark}
In Lemma~\ref{lem:appendix} we show that the supremum in \eqref{eq:gamma*} is attained for $k=3$ if $\beta \geq \beta_*$, and in the limit as $k\to\infty$ if $\beta<\beta_*$. This suggests that the loss of stability of the ball is related to a perturbation by the third mode of spherical harmonics in the first case whereas it is related to fine mixing in the latter case. It would be interesting to perform a bifurcation analysis to further investigate the symmetry breaking at the stability threshold.
\end{remark}

\begin{remark} \label{rmk:IIvar_quadraticbound}
In the case $\gamma>\gamma_{*}$, by using \eqref{eq:gammatilde*} and \eqref{eq:IIvar_spherical_harm-b} we have for every $u\in L^2(\partial B_1)$ with $\int_{\partial B_1}u\dd\Hd=0$ that
\begin{align*}
\mathcal{Q}\en_\gamma(u)
& \geq \Bigl(\frac{\gamma}{\gamma_*}-1\Bigr) \sum_{k=2}^\infty\sum_{i=1}^{d(k)} \bigl[\mu_k(-\alpha)-\mu_1(-\alpha)\bigr]\bigl(a^i_k(u)\bigr)^2 .
\end{align*}
Since $\mu_k(-\alpha)-\mu_1(-\alpha)\geq C_{d,\alpha}>0$ for all $k\geq 2$ by \eqref{eq:muk-1}, we conclude that
\begin{equation} \label{eq:IIvar_quadraticbound*}
\mathcal{Q}\en_\gamma(u) \geq \Bigl(\frac{\gamma}{\gamma_*}-1\Bigr) C_{d,\alpha}\|u\|_{L^2(\partial B_1)}^2 \qquad\qquad\text{if } \gamma>\gamma_{*}.
\end{equation}
By a similar argument we can show that 
\begin{equation} \label{eq:IIvar_quadraticbound**}
\mathcal{Q}\en_\gamma(u) \leq -\Bigl(1-\frac{\gamma}{\gamma_{**}}\Bigr) C_{d,\alpha}\|u\|_{L^2(\partial B_1)}^2 \qquad\qquad\text{if } \gamma<\gamma_{**}.
\end{equation}
\end{remark}

\begin{remark} \label{rmk:explicitGamma}
We can express the value of $\nl_\sigma(B_1)$ in terms of the $\Gamma$ function, and thus get an explicit value of the thresholds $\gamma_*$ and $\gamma_{**}$. Indeed, by \eqref{eq:nlcurvature_ball}, \eqref{eq:mu1}, and \eqref{eq:mu} we find
\begin{equation} \label{eq:energyball}
\begin{split}
\nl_\sigma(B_1)
& = \frac{d\omega_d}{(d+\sigma)(2d+\sigma)}\mu_1(\sigma) \\
& = 2^{d+\sigma} \frac{d(d-1)(d-1+\sigma)\omega_d\omega_{d-1}}{(d+\sigma)(2d+\sigma)(2d-2+\sigma)} \, \frac{\Gamma\left(\frac{d-1+\sigma}{2}\right)\Gamma\left(\frac{d-1}{2}\right)}{\Gamma\left(\frac{2d-2+\sigma}{2}\right)} \,.
\end{split}
\end{equation}
\end{remark}

\begin{remark} \label{rmk:alphalarge-2}
In the case $\alpha\in[d-1,d)$ one has $\mu_k(-\alpha)\to+\infty$ as $k\to\infty$ (see Remark~\ref{rmk:alphalarge}) and therefore $\gamma_*=+\infty$: the unit ball is \emph{never} stable for the functional $\en_\gamma$ and, given any $\gamma>0$, one can always find $u\in L^2(\partial B_1)$ such that $\mathcal{Q}\en_\gamma(u)<0$. It is worth to recall that in this case $B_1$ is not even a critical point for the relaxed problem \eqref{eq:minrelaxed}, see \cite{FraLie21}.
\end{remark}

%%%%%%%%%%%%%%%%%%%%%%%%%%%%%%%%%%%%%%%%%%%%%%%%%%%%%
%%%%%%%%%%%%%%%%%%%%%%%%%%%%%%%%%%%%%%%%%%%%%%%%%%%%%
%%%%%%%%%%%%%%%%%%%%%%%%%%%%%%%%%%%%%%%%%%%%%%%%%%%%%
%%%%%%%%%%%%%%%%%%%%%%%%%%%%%%%%%%%%%%%%%%%%%%%%%%%%%
%%%%%%%%%%%%%%%%%%%%%%%%%%%%%%%%%%%%%%%%%%%%%%%%%%%%%

\section{A Fuglede-type result for nearly-spherical sets}\label{sec:fuglede}

In this section we prove the local optimality, in a quantitative form, of the unit ball for the functional $\en_\gamma$ (local minimality in the case $\gamma>\gamma_{*}$ and local maximality in the case $\gamma<\gamma_{**}$) among \emph{nearly spherical sets}, i.e.\ sets $E\subset\R^d$ with $|E|=|B_1|$, barycenter at the origin, and whose boundary can be represented in the form
\begin{equation} \label{eq:nearly_spherical}
\partial E = \bigl\{ (1+u(x))x \,:\, x\in\partial B_1 \}
\end{equation}
for some $u\in L^\infty(\partial B_1)$ with $\|u\|_{L^\infty(\partial B_1)}\leq\e_0$, for some $\e_0>0$ sufficiently small. The main result of this section is the following.

\begin{theorem} \label{thm:fuglede}
Let $\alpha\in(0,d-1)$, $\beta>0$, $\gamma>0$. There exist positive constants $C_0=C_0(d,\alpha,\beta)$, $C_0'=C_0'(d,\alpha,\beta)$, and $\e_0=\e_0(d,\alpha,\beta,\gamma)$ with the following property. For every nearly-spherical set $E\subset\R^d$ as in \eqref{eq:nearly_spherical} with $|E|=|B_1|$, barycenter at the origin, and $\|u\|_{L^\infty(\partial B_1)}\leq \e_0$, it holds
\begin{align}
\en_\gamma(E) - \en_\gamma (B_1) &\geq C_0\Bigl(\frac{\gamma}{\gamma_{*}}-1\Bigr)\|u\|_{L^2(\partial B_1)}^2 \geq C_0'\Bigl(\frac{\gamma}{\gamma_{*}}-1\Bigr)|\asymm{E}{B_1}|^2 & &\text{if }\gamma > \gamma_*,\label{eq:fuglede1}\\
\en_\gamma(E) - \en_\gamma (B_1) &\leq -C_0\Bigl(1-\frac{\gamma}{\gamma_{**}}\Bigr)\|u\|_{L^2(\partial B_1)}^2 \leq -C_0'\Bigl(1-\frac{\gamma}{\gamma_{**}}\Bigr)|\asymm{E}{B_1}|^2 & & \text{if }\gamma < \gamma_{**}.\label{eq:fuglede2}
\end{align}
\end{theorem}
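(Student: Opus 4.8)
The plan is to reduce the energy difference to the quadratic form $\mathcal{Q}\en_\gamma$ via a second-order Taylor expansion along a suitable volume-preserving flow connecting $B_1$ to $E$, and then to use the spectral gap estimates of Remark~\ref{rmk:IIvar_quadraticbound} to control it. More precisely, given a nearly-spherical set $E$ as in \eqref{eq:nearly_spherical} with $\|u\|_{L^\infty}\leq\e_0$, I would first extend $u$ (composed with the radial projection) to a vector field $X\in C^\infty_{\mathrm c}(\R^d;\R^d)$ pointing in the radial direction, normalized so that $X\cdot\nu_{B_1}=u$ on $\partial B_1$, and consider the associated flow $\{\Phi_t\}$; the set $\Phi_1(B_1)$ is then comparable to $E$, and a Taylor expansion gives $\en_\gamma(E)-\en_\gamma(B_1) = \delta\en_\gamma(B_1)[X] + \tfrac12\delta^2\en_\gamma(B_1)[X] + o(\|u\|_{L^2}^2)$, where the linear term vanishes because $B_1$ is stationary and the flow (after a correction) preserves volume. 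The main term is then $\tfrac12\mathcal{Q}\en_\gamma(u)$ by \eqref{eq:quadform-2}, and by Remark~\ref{rmk:IIvar_quadraticbound} this is bounded below (resp.\ above) by $\pm\tfrac12(\tfrac{\gamma}{\gamma_*}-1)C_{d,\alpha}\|u\|_{L^2}^2$ (resp.\ with $\gamma_{**}$), which after absorbing the error term for $\e_0$ small yields the first inequalities in \eqref{eq:fuglede1}--\eqref{eq:fuglede2}.

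For the implementation I would avoid the flow machinery and instead expand $\en_\gamma(E)$ directly, writing each term $\nl_\sigma(E)$ as a double integral over $E$ in polar coordinates: $\nl_\sigma(E)=\int_{\Sone^{d-1}}\int_{\Sone^{d-1}}\int_0^{1+u(\omega)}\int_0^{1+u(\omega')} |r\omega - r'\omega'|^\sigma (rr')^{d-1}\,dr\,dr'\,d\Hd_\omega\,d\Hd_{\omega'}$, expand in powers of $u$ to second order, and collect the terms. This is the route taken in \cite{FFMMM,Asc22} and the bookkeeping produces exactly the four pieces of $\mathcal{Q}\en_\gamma$ plus a first-order term that is handled by the volume and barycenter constraints: $\int_{\partial B_1}u\,d\Hd = O(\|u\|_{L^2}^2)$ from $|E|=|B_1|$, and the barycenter condition kills the degree-one spherical harmonic component of $u$ up to the same order, so that the dangerous (non-coercive) modes $k=0,1$ contribute only lower-order terms. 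The remainder estimate requires the uniform smallness $\|u\|_{L^\infty}\leq\e_0$ together with the integrability of the kernels $K_{-\alpha}$, $K_\beta$ on $\partial B_1$ (which holds precisely because $\alpha<d-1$), giving a remainder of the form $\omega(\e_0)\|u\|_{L^2}^2$ with $\omega(\e_0)\to0$.

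The passage from the $\|u\|_{L^2}$-bound to the asymmetry bound (the second inequalities in \eqref{eq:fuglede1}--\eqref{eq:fuglede2}) is elementary: for a nearly-spherical set one has $|\asymm{E}{B_1}| \leq C\|u\|_{L^1(\partial B_1)} \leq C'\|u\|_{L^2(\partial B_1)}$ by Cauchy--Schwarz, and conversely $|\asymm{E}{B_1}|$ is bounded below by a multiple of $\|u\|_{L^1(\partial B_1)}$ as well; however, we only need the upper bound $|\asymm{E}{B_1}|^2 \leq C\|u\|_{L^2(\partial B_1)}^2$, which follows directly from $\bigl||\asymm{E}{B_1}|\bigr| = \bigl|\int_{\Sone^{d-1}}\bigl|\tfrac{(1+u(\omega))^d - 1}{d}\bigr|\,d\Hd_\omega\bigr| \leq C\|u\|_{L^1(\partial B_1)} \leq C'\|u\|_{L^2(\partial B_1)}$ for $\|u\|_{L^\infty}\leq\e_0\leq1$. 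This gives $C_0'$ in terms of $C_0$ and the dimensional constant, completing the proof.

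The step I expect to be the main obstacle is the careful second-order expansion of the nonlocal terms with a \emph{uniform} control of the remainder: unlike perimeter-type energies, here the ``curvature'' terms $c^2_{\sigma,\partial B_1}$ and the singular double integral both need to be tracked simultaneously, and one must verify that all the cross terms and higher-order contributions can be bounded by $\omega(\e_0)\|u\|_{L^2(\partial B_1)}^2$ rather than, say, $\|u\|_{L^2}\|u\|_{H^{1/2}}$ or a stronger norm — which is possible here, in contrast to the perimeter case, precisely because the quadratic form $\mathcal{Q}\en_\gamma$ is itself coercive on $L^2$ (by the spectral gap in Lemma~\ref{lemma:muk}) rather than merely on a fractional Sobolev space. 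Handling the linear term via the constraints, and making sure the barycenter normalization removes the $k=1$ modes to the right order, is a standard but delicate point that I would treat with care.
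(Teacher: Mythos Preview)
Your proposal is correct and follows essentially the same route as the paper's proof: a direct second-order expansion of $\nl_\sigma(E)$ in polar coordinates (as in \cite{FFMMM}), identification of the main term with $\tfrac12\mathcal{Q}\en_\gamma(u)$, use of the volume and barycenter constraints to suppress the $k=0,1$ modes up to $O(\e_0)\|u\|_{L^2}^2$, and the spectral gap from Lemma~\ref{lemma:muk} / Remark~\ref{rmk:IIvar_quadraticbound} to conclude. The only organizational difference is that the paper keeps the seminorm terms $\int_{\partial B_1}\int_{\partial B_1}K_\sigma(|x-y|)|u(x)-u(y)|^2$ explicitly in the remainder and absorbs them into the main term via the spherical-harmonics coefficients, whereas you propose bounding them directly by a multiple of $\|u\|_{L^2}^2$ (which is legitimate since the $\mu_k(\sigma)$ are uniformly bounded by \eqref{eq:muk-limit}); both work, and your identification of the delicate step---the uniform control of the remainder in $L^2$ rather than a stronger norm---is exactly right.
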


\begin{proof}
The proof follows the lines of \cite[Lemma~5.3]{FFMMM}. Replacing $u$ by $tu$, we can consider a generic open set $E_t\subset\R^d$ with $|E_t|=|B_1|$, barycenter at the origin, and
\begin{equation} \label{proof:fuglede-1}
\partial E_t = \bigl\{ (1+tu(x))x \,:\, x\in\partial B_1 \},
\end{equation}
where $\|u\|_{L^\infty(\partial B_1)}\leq\frac12$ and $t\in(0,2\e_0)$, with $\e_0\in(0,\frac18)$ to be chosen later.

By the same computations as in the proof of \cite[Lemma~5.3]{FFMMM} we have that, for every $\sigma\in(-d,\infty)$,
\begin{align}\label{proof:fuglede-2}
\nl_\sigma(E_t) - \nl_\sigma(B_1) = \frac{\nl_\sigma(B_1)}{d\omega_d}\Bigl(h_\sigma(t)-h_\sigma(0)\Bigr) - \frac{t^2}{2}g_\sigma(t),
\end{align}
where
\begin{align} \label{proof:fuglede-3}
h_\sigma(t)\defeq \int_{\partial B_1}\bigl(1+tu(x)\bigr)^{2d+\sigma}\dd\Hd_x,
\end{align}
\begin{align} \label{proof:fuglede-4}
g_\sigma(t)\defeq \int_{\partial B_1}\dd\Hd_x \int_{\partial B_1}\dd\Hd_y \int_{u(y)}^{u(x)}\dd r \int_{u(y)}^{u(x)} f_\sigma(1+tr,1+t\rho,|x-y|)\dd\rho,
\end{align}
and
\begin{align} \label{proof:fuglede-5}
f_\sigma(r,\rho,\theta)\defeq r^{d-1}\rho^{d-1}K_\sigma\bigl( (|r-\rho|^2+r\rho\theta^2)^\frac12 \bigr).
\end{align}
We now consider separately the two terms on the right-hand side of \eqref{proof:fuglede-2}.

\medskip\noindent\textit{Estimate on $h_\sigma$.}
Observe that the condition $|E_t|=|B_1|$ yields $h_\sigma(0)=d\omega_d=d|E_t|=\int_{\partial B_1}(1+tu)^d\dd\Hd$. Therefore,
\begin{align*}
h_\sigma&(t)-h_\sigma(0)
 =\int_{\partial B_1}\bigl(1+tu(x)\bigr)^d \bigl[\bigl(1+tu(x)\bigr)^{d+\sigma}-1\bigr] \dd\Hd_x \\
& = (d+\sigma)t\int_{\partial B_1}u\dd\Hd + \frac12(d+\sigma)(3d+\sigma-1)t^2\int_{\partial B_1} u^2\dd\Hd + \mathcal{O}_{d,\sigma}\bigl(t^3\|u^3\|_{L^1(\partial B_1)}\bigr),
\end{align*}
where $|\mathcal{O}_{d,\sigma}(s)|\leq C s$ for every $s\in[-\frac12,\frac12]$, for a constant $C$ depending only on $d$ and $\sigma$. Next, using again the condition $|E_t|=|B_1|$, we have
\begin{align*}
0 & =\int_{\partial B_1}\bigl[\bigl(1+tu(x)\bigr)^d-1\bigr]\dd\Hd_x \\
& = dt\int_{\partial B_1}u\dd\Hd + \frac{1}{2}d(d-1)t^2\int_{\partial B_1}u^2\dd\Hd + \mathcal{O}_{d}(t^3\|u^3\|_{L^1(\partial B_1)}).
\end{align*}
By combining this identity with the previous one we eventually obtain
\begin{equation} \label{proof:fuglede-6}
h_\sigma(t)-h_\sigma(0) 
= \frac{t^2}{2} (d+\sigma)(2d+\sigma)\int_{\partial B_1}u^2\dd\Hd + \mathcal{O}_{d,\sigma}\bigl(t^3\|u^3\|_{L^1(\partial B_1)}\bigr).
\end{equation}

\medskip\noindent\textit{Estimate on $g_\sigma$.}
In order to estimate $|g'_\sigma(s)|$, we compute for $r,\rho\in[-\frac12,\frac12]$, $\theta\in(0,2]$, and $s\in(0,2\e_0)$
\begin{align} \label{proof:estimate_g-0}
\bigg|&\frac{\dd}{\dd s}f_\sigma(1+sr,1+s\rho,\theta)\bigg|
	= (1+sr)^{d-1}(1+s\rho)^{d-1} \times \nonumber\\
& \quad \times \bigg| (d-1)\biggl(\frac{r}{(1+sr)}+\frac{\rho}{(1+s\rho)}\biggr) K_\sigma\bigl(\bigl(s^2|r-\rho|^2+(1+sr)(1+s\rho)\theta^2\bigr)^\frac12\bigr) \nonumber\\
&  \qquad + \frac{\sigma}{2}\frac{K_\sigma\bigl(\bigl(s^2|r-\rho|^2+(1+sr)(1+s\rho)\theta^2\bigr)^\frac12\bigr)}{s^2|r-\rho|^2+(1+sr)(1+s\rho)\theta^2} \Bigl[ 2s(r-\rho)^2+\bigl(r(1+s\rho)+\rho(1+sr)\bigr)\theta^2 \Bigr] \bigg| \nonumber\\
& \leq C_{d,\sigma}K_\sigma\bigl(\bigl(s^2|r-\rho|^2+(1+sr)(1+s\rho)\theta^2\bigr)^\frac12\bigr)
\biggl[1+\frac{2s(r-\rho)^2}{s^2|r-\rho|^2+(1+sr)(1+s\rho)\theta^2} \biggr]
\end{align}
for a uniform constant $C_{d,\sigma}$ depending only on $d$ and $\sigma$, which in the following might change from line to line.

Consider first the case $\sigma\in(-(d-1),0)$. In this case $K_\sigma$ is monotone decreasing, hence from \eqref{proof:estimate_g-0} we have
\begin{equation*}
\bigg|\frac{\dd}{\dd s}f_\sigma(1+sr,1+s\rho,\theta)\bigg| \leq C_{d,\sigma}K_\sigma(\theta)
\biggl[1+\frac{2s(r-\rho)^2}{s^2|r-\rho|^2+(1+sr)(1+s\rho)\theta^2} \biggr]\,.
\end{equation*}
Let $C_s\defeq\{(x,y)\in\partial B_1\times\partial B_1 : |x-y|\geq \sqrt{s}\}$ and $D_s\defeq\{(x,y)\in\partial B_1\times\partial B_1 : |x-y|<\sqrt{s}\}$. Then
\begin{align} \label{proof:estimate_g-1}
|&g_\sigma'(s)|
\leq C_{d,\sigma} \int_{\partial B_1}\int_{\partial B_1}K_\sigma(|x-y|)|u(x)-u(y)|^2\dd\Hd_x\dd\Hd_y \nonumber\\
& + C_{d,\sigma}\iint_{C_s}\mathrm{d}\Hd_x\mathrm{d}\Hd_y 
\int_{u(x)\land u(y)}^{u(x)\lor u(y)}\mathrm{d} r 
\int_{u(x)\land u(y)}^{u(x)\lor u(y)} \frac{2s(r-\rho)^2 K_\sigma(|x-y|) }{s^2|r-\rho|^2+(1+sr)(1+s\rho)|x-y|^2}\dd\rho \nonumber \\
& + C_{d,\sigma}\iint_{D_s}\mathrm{d}\Hd_x\mathrm{d}\Hd_y 
\int_{u(x)\land u(y)}^{u(x)\lor u(y)}\mathrm{d} r 
\int_{u(x)\land u(y)}^{u(x)\lor u(y)} \frac{2s(r-\rho)^2 K_\sigma(|x-y|) }{s^2|r-\rho|^2+(1+sr)(1+s\rho)|x-y|^2}\dd\rho \nonumber \\
& \leq C_{d,\sigma} \int_{\partial B_1}\int_{\partial B_1}K_\sigma(|x-y|)|u(x)-u(y)|^2\dd\Hd_x\dd\Hd_y \nonumber \\
& \quad + \frac{C_{d,\sigma}}{s}\iint_{D_s} K_\sigma(|x-y|)|u(x)-u(y)|^2 \dd\Hd_x\dd\Hd_y \nonumber \\
& \leq C_{d,\sigma} \int_{\partial B_1}\int_{\partial B_1}K_\sigma(|x-y|)|u(x)-u(y)|^2\dd\Hd_x\dd\Hd_y \nonumber \\
& \quad + \frac{C_{d,\sigma}}{s}\int_{\partial B_1} \biggl(\int_{\{y\in\partial B_1 \,:\, |y-x|<\sqrt{s}\}} K_\sigma(|x-y|)\dd\Hd_y\biggr) |u(x)|^2 \dd\Hd_x \nonumber \\
&= C_{d,\sigma} \int_{\partial B_1}\int_{\partial B_1}K_\sigma(|x-y|)|u(x)-u(y)|^2\dd\Hd_x\dd\Hd_y 
+ C_{d,\sigma} s^{\frac{d-1+\sigma}{2}-1} \|u\|_{L^2(\partial B_1)}^2,
\end{align}
where in the last step we used the fact that $\sigma>-(d-1)$.

Next, we consider the case $\sigma\geq 0$. In this case $K_\sigma\bigl(\bigl(s^2|r-\rho|^2+(1+sr)(1+s\rho)\theta^2\bigr)^\frac12\bigr)$ is uniformly bounded in the range of parameters that we consider. Then, arguing as in the previous case, from \eqref{proof:estimate_g-0} we deduce that
\begin{align} \label{proof:estimate_g-2}
|&g_\sigma'(s)|
\leq C_{d,\sigma}\iint_{C_s\cup D_s}
\int_{u(x)\land u(y)}^{u(x)\lor u(y)}\mathrm{d}r 
\int_{u(x)\land u(y)}^{u(x)\lor u(y)}\biggl[1+\frac{2s(r-\rho)^2}{s^2|r-\rho|^2+(1+sr)(1+s\rho)|x-y|^2} \biggr] \dd\rho \nonumber\\
& \leq C_{d,\sigma}\int_{\partial B_1}\int_{\partial B_1}|u(x)-u(y)|^2\dd\Hd_x\dd\Hd_y \nonumber \\
& \quad + C_{d,\sigma}\iint_{C_s}\dd\Hd_x\dd\Hd_y 
\int_{u(x)\land u(y)}^{u(x)\lor u(y)}\dd r 
\int_{u(x)\land u(y)}^{u(x)\lor u(y)}\frac{2|x-y|^2(r-\rho)^2}{(1+sr)(1+s\rho)|x-y|^2}\dd\rho \nonumber\\
& \quad + C_{d,\sigma}\iint_{D_s}\dd\Hd_x\dd\Hd_y 
\int_{u(x)\land u(y)}^{u(x)\lor u(y)}\dd r 
\int_{u(x)\land u(y)}^{u(x)\lor u(y)}\frac{2s(r-\rho)^2}{s^2|r-\rho|^2}\dd\rho \nonumber\\
& \leq C_{d,\sigma} \int_{\partial B_1}\int_{\partial B_1}|u(x)-u(y)|^2\dd\Hd_x\dd\Hd_y + \frac{C_{d,\sigma}}{s}\iint_{D_s} |u(x)-u(y)|^2 \dd\Hd_x\dd\Hd_y \nonumber\\
& \leq C_{d,\sigma}\|u\|_{L^2(\partial B_1)}^2 + \frac{C_{d,\sigma}}{s}\int_{\partial B_1} \biggl(\int_{\{y\in\partial B_1 \,:\, |y-x|<\sqrt{s}\}}\dd\Hd_y\biggr) |u(x)|^2 \dd\Hd_x \nonumber\\
& \leq C_{d,\sigma}\Bigl(1+s^{\frac{d-1}{2}-1}\Bigr)\|u\|_{L^2(\partial B_1)}^2 .
\end{align}

Therefore, from \eqref{proof:estimate_g-1} and \eqref{proof:estimate_g-2} we conclude that for every $\sigma>-(d-1)$ and for every $t\in(0,2\e_0)$
\begin{equation} \label{proof:fuglede-7}
\begin{split}
g_\sigma(t)
& = g_\sigma(0) + \int_0^t g'_\sigma(s)\dd s \\
& = \int_{\partial B_1}\int_{\partial B_1}K_\sigma(|x-y|)|u(x)-u(y)|^2\dd\Hd_x\dd\Hd_y + \mathcal{R}_\sigma(t),
\end{split}
\end{equation}
with
\begin{equation} \label{proof:fuglede-7b}
\begin{split}
|\mathcal{R}_\sigma(t)|
& \leq C_{d,\sigma}t\int_{\partial B_1}\int_{\partial B_1}K_\sigma(|x-y|)|u(x)-u(y)|^2\dd\Hd_x\dd\Hd_y \\
& \qquad + C_{d,\sigma}t\|u\|_{L^2(\partial B_1)}^2 + C_{d,\sigma}\tilde\omega(t)\|u\|_{L^2(\partial B_1)}^2,
\end{split}
\end{equation}
and
$$
\tilde\omega(t)\coloneqq
\begin{cases}
t^{\frac{d-1+\sigma}{2}} & \text{ if } \sigma< 0,\\
t^{\frac{d-1}{2}} & \text{ if } \sigma\geq 0.
\end{cases}
$$

\medskip\noindent\textit{Conclusion of the proof.}
By inserting \eqref{proof:fuglede-6} and \eqref{proof:fuglede-7} into \eqref{proof:fuglede-2} we obtain
\begin{equation} \label{proof:fuglede-8}
\begin{split}
\nl_\sigma&(E_t) -\nl_\sigma(B_1) = \frac{t^2}{2}\biggl[ \frac{(d+\sigma)(2d+\sigma)}{d\omega_d}\,\nl_\sigma(B_1)\int_{\partial B_1}u^2\dd\Hd \\
&- \int_{\partial B_1}\int_{\partial B_1}K_\sigma(|x-y|)|u(x)-u(y)|^2\dd\Hd_x\dd\Hd_y \biggr]
+ \mathcal{O}_{d,\sigma}\bigl(t^3\|u^3\|_{L^1(\partial B_1)}\bigr) + t^2 \mathcal{R}_\sigma(t),
\end{split}
\end{equation}
where $\mathcal{R}_\sigma(t)$ obeys the estimate \eqref{proof:fuglede-7b}.

Recalling \eqref{eq:quad_form} and \eqref{eq:nlcurvature_ball}, in the term in brackets we recognize the quadratic form associated to the second variation of the functional $\nl_\sigma$. Hence by combining the formula \eqref{proof:fuglede-8} in the two cases $\sigma=-\alpha$ and $\sigma=\beta$ we obtain for the full energy $\en_\gamma$
\begin{align} \label{proof:fuglede-9}
\en_\gamma(E_t)-\en_\gamma(B_1) = \frac{t^2}{2}\mathcal{Q}\en_\gamma(u) + \mathcal{R}(t),
\end{align}
with
\begin{align} \label{proof:fuglede-10}
|\mathcal{R}(t)| & \leq C_{d,\alpha,\beta}t^2\omega(t)\biggl( \int_{\partial B_1}\int_{\partial B_1}\frac{|u(x)-u(y)|^2}{|x-y|^\alpha}\dd\Hd_x\dd\Hd_y \nonumber \\
& \qquad + \gamma\int_{\partial B_1}\int_{\partial B_1}|x-y|^\beta|u(x)-u(y)|^2\dd\Hd_x\dd\Hd_y + (1+\gamma)\|u\|^2_{L^2(\partial B_1)}\biggr)
\end{align}
for a uniform constant $C_{d,\alpha,\beta}$ depending only on $d$, $\alpha$, $\beta$, which in the following might change from line to line, and some modulus of continuity $\omega(t)\to0$ as $t\to0$, also uniform in $t$ and $u$.

To conclude the proof, we rely once again on the spherical harmonic representation of the previous quantities. Denoting by $a^i_k(u)$ the Fourier coefficients of $u$ as in \eqref{eq:fourier}, we preliminary observe that exploiting the condition $|E_t|=|B_1|$ we have
\begin{equation} \label{proof:fuglede-11}
|a_0^1(u)| = \bigg| \frac{1}{\sqrt{d\omega_d}}\int_{\partial B_1}u\dd\Hd \bigg| \leq C_d t \|u\|_{L^2(\partial B_1)}^2,
\end{equation}
and similarly since the barycenter of $E_t$ is at the origin we have $\int_{\partial B_1}(1+tu)^{d+1}x\dd\Hd_x=0$, which yields
\begin{equation} \label{proof:fuglede-12}
|a_1^i(u)| = \bigg| \frac{1}{\sqrt{\omega_d}}\int_{\partial B_1}u\,x_i\dd\Hd \bigg| \leq C_d t \|u\|_{L^2(\partial B_1)}^2 \qquad\text{for }i=1,\ldots,d.
\end{equation}

We first consider the case $\gamma>\gamma_*$. Using \eqref{eq:L2norm}, \eqref{eq:seminorm} and \eqref{eq:IIvar_spherical_harm}, and inserting them into \eqref{proof:fuglede-9}--\eqref{proof:fuglede-10}, we obtain
\begin{align*}
\en_\gamma(E_t) & - \en_\gamma(B_1) \\
& \geq \frac{t^2}{2}\sum_{k=0}^\infty\sum_{i=1}^{d(k)} \Bigl[ \bigl(\mu_1(-\alpha)-\mu_k(-\alpha)\bigr) + \gamma\bigl(\mu_1(\beta)-\mu_k(\beta) \bigr)\Bigr]\bigl(a^i_k(u)\bigr)^2 \\
& \qquad - C_{d,\alpha,\beta} t^2\omega(t) \sum_{k=0}^\infty\sum_{i=1}^{d(k)} \Bigl[\mu_k(-\alpha)+\gamma\mu_k(\beta)+(1+\gamma)\Bigr] \bigl(a^i_k(u)\bigr)^2 \\
& \geq \frac{t^2}{2}\bigl(1+C_{d,\alpha,\beta}\omega(t)\bigr)\sum_{k=0}^\infty\sum_{i=1}^{d(k)} \Bigl[ \bigl(\mu_1(-\alpha)-\mu_k(-\alpha)\bigr) + \gamma\bigl(\mu_1(\beta)-\mu_k(\beta) \bigr)\Bigr]\bigl(a^i_k(u)\bigr)^2 \\
& \qquad - C_{d,\alpha,\beta} t^2\omega(t) \Bigl[\mu_1(-\alpha)+\gamma\mu_1(\beta)+(1+\gamma)\Bigr] \|u\|_{L^2(\partial B_1)}^2 .
\end{align*}
Recalling that $\mu_0(-\alpha)=\mu_0(\beta)=0$ and in view of the expression \eqref{eq:gammatilde*} of $\gamma_*$, we then obtain
\begin{align*}
\en_\gamma(E_t) - \en_\gamma(B_1) 
& \geq \frac{t^2}{2}\bigl(1+C_{d,\alpha,\beta}\omega(t)\bigr)\Bigl(\frac{\gamma}{\gamma_*}-1\Bigr) \sum_{k=2}^\infty\sum_{i=1}^{d(k)} \bigl[\mu_k(-\alpha)-\mu_1(-\alpha)\bigr]\bigl(a^i_k(u)\bigr)^2 \\
& \qquad - C_{d,\alpha,\beta} t^2\omega(t) \Bigl[ \mu_1(-\alpha)+\gamma\mu_1(\beta)+ 1 +\gamma \Bigr]\|u\|_{L^2(\partial B_1)}^2 .
\end{align*}
By Lemma~\ref{lemma:muk}, $\mu_k(-\alpha)-\mu_1(-\alpha)\geq C_{d,\alpha}\mu_1(-\alpha)$ for all $k\geq 2$ with $C_{d,\alpha}>0$, hence
\begin{align*}
\en_\gamma(E_t) & - \en_\gamma(B_1) \\
& \geq \frac{t^2}{2}\bigl(1+C_{d,\alpha,\beta}\omega(t)\bigr)\Bigl(\frac{\gamma}{\gamma_*}-1\Bigr)C_{d,\alpha}\mu_1(-\alpha) \sum_{k=2}^\infty \bigl(a^i_k(u)\bigr)^2 \nonumber \\
& \qquad - C_{d,\alpha,\beta} t^2\omega(t) \Bigl[ \mu_1(-\alpha)+\gamma\mu_1(\beta)+ 1 +\gamma \Bigr]\|u\|_{L^2(\partial B_1)}^2 \\
& = \frac{t^2}{2}\bigl(1+C_{d,\alpha,\beta}\omega(t)\bigr)\Bigl(\frac{\gamma}{\gamma_*}-1\Bigr)C_{d,\alpha}\mu_1(-\alpha) \biggl(\|u\|_{L^2(\partial B_1)}^2 - |a_0^1(u)|^2-\sum_{i=1}^d|a_1^i(u)|^2\biggr)\\
& \qquad - C_{d,\alpha,\beta} t^2\omega(t) \Bigl[ \mu_1(-\alpha)+\gamma\mu_1(\beta)+ 1 +\gamma \Bigr]\|u\|_{L^2(\partial B_1)}^2 .
\end{align*}
Thanks to \eqref{proof:fuglede-11} and \eqref{proof:fuglede-12}, and recalling that $\omega(t)\to0$ as $t\to0$, it is now easy to conclude that \eqref{eq:fuglede1} holds in the case $\gamma>\gamma_*$ provided that we choose $\e_0$ small enough.

The argument in the case $\gamma<\gamma_{**}$ is very similar and we will only sketch here the main differences with the previous one. Starting once again from \eqref{proof:fuglede-9}--\eqref{proof:fuglede-10} and using the spherical harmonics representation we have
\begin{align*}
\en_\gamma(E_t) - \en_\gamma(B_1) 
& \leq \frac{t^2}{2}\sum_{k=0}^\infty\sum_{i=1}^{d(k)} \Bigl[ \bigl(\mu_1(-\alpha)-\mu_k(-\alpha)\bigr) + \gamma\bigl(\mu_1(\beta)-\mu_k(\beta) \bigr)\Bigr]\bigl(a^i_k(u)\bigr)^2 \\
& \qquad + C_{d,\alpha,\beta} t^2\omega(t) \sum_{k=0}^\infty\sum_{i=1}^{d(k)} \Bigl[\mu_k(-\alpha)+\gamma\mu_k(\beta)+(1+\gamma)\Bigr] \bigl(a^i_k(u)\bigr)^2. \end{align*}
By similar computations as before and using the representation \eqref{eq:gammatilde**} of $\gamma_{**}$ we then find
\begin{align*}
\en_\gamma(E_t) - \en_\gamma(B_1) 
& \leq \frac{t^2}{2}\bigl(1-C_{d,\alpha,\beta}\omega(t)\bigr)\Bigl(1-\frac{\gamma}{\gamma_{**}}\Bigr) \sum_{k=2}^\infty\sum_{i=1}^{d(k)} \bigl[\mu_1(-\alpha)-\mu_k(-\alpha)\bigr]\bigl(a^i_k(u)\bigr)^2 \\
& \qquad + C_{d,\alpha,\beta} t^2\omega(t) \Bigl[ \mu_1(-\alpha)+\gamma\mu_1(\beta)+ 1 +\gamma \Bigr]\|u\|_{L^2(\partial B_1)}^2,
\end{align*}
and eventually, using $\mu_k(-\alpha)-\mu_1(-\alpha)\geq C_{d,\alpha}\mu_1(-\alpha)$, \eqref{proof:fuglede-11}, and \eqref{proof:fuglede-12}, we conclude as before that \eqref{eq:fuglede2} holds.
\end{proof}

%%%%%%%%%%%%%%%%%%%%%%%%%%%%%%%%%%%%%%%%%%%%%%%%%%%%%
%%%%%%%%%%%%%%%%%%%%%%%%%%%%%%%%%%%%%%%%%%%%%%%%%%%%%
%%%%%%%%%%%%%%%%%%%%%%%%%%%%%%%%%%%%%%%%%%%%%%%%%%%%%
%%%%%%%%%%%%%%%%%%%%%%%%%%%%%%%%%%%%%%%%%%%%%%%%%%%%%
%%%%%%%%%%%%%%%%%%%%%%%%%%%%%%%%%%%%%%%%%%%%%%%%%%%%%

\section{Local optimality of the ball}\label{sec:locmin}

Having showed the local optimality of the ball among nearly-spherical sets in Theorem~\ref{thm:fuglede}, the next step consists in proving that $B_1$ is a local minimizer (resp. maximizer) of $\en_\gamma$, for $\gamma>\gamma_{*}$ (resp. $\gamma<\gamma_{**}$), among all measurable sets $E\subset \R^d$ with $|E|=|B_1|$ and whose boundary is uniformly close to the boundary of the ball, in the sense that they satisfy the inclusions
\begin{equation} \label{eq:linfty-neighbour}
B_{1-\e_1}(x_0) \subset E \subset B_{1+\e_1}(x_0)
\end{equation}
for some $x_0\in\R^d$ and $\e_1>0$ sufficiently small. More precisely, we will prove the following result.

\begin{theorem} \label{thm:locmin_infinity}
Let $\alpha\in(0,d-1)$, $\beta>0$, $\gamma>0$. Then, there exist $C_1>0$ and $\e_1>0$ (depending only on $d$, $\alpha$, $\beta$, $\gamma$) such that
\begin{align}
\en_\gamma(E) - \en_\gamma(B_1) \geq C_1 (\Asymm{E})^2 & \qquad\text{if }\gamma>\gamma_{*}, \label{eq:locmin_infinity1}\\
\en_\gamma(E) - \en_\gamma(B_1) \leq - C_1 (\Asymm{E})^2 & \qquad\text{if }\gamma<\gamma_{**}, \label{eq:locmin_infinity2}
\end{align}
for every measurable set $E\subset\R^d$ with $|E|=|B_1|$ and satisfying \eqref{eq:linfty-neighbour} for some $x_0\in\R^d$, where $\Asymm{E}$ denotes the asymmetry defined in \eqref{eq:asymm}.
\end{theorem}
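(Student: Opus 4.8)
The plan is to bootstrap Theorem~\ref{thm:fuglede} from nearly-spherical sets to the uniform neighbourhood \eqref{eq:linfty-neighbour} by means of the transport reduction of Fusco--Pratelli \cite{FusPra20}. First I would normalize. By translation invariance of $\en_\gamma$ we may take $x_0=0$; since $B_{1-\e_1}\subset E\subset B_{1+\e_1}$ together with $|E|=|B_1|$ force $E$ to coincide with $B_1$ outside the thin annulus $\{1-\e_1<|x|<1+\e_1\}$, one has $|\asymm E{B_1}|\le C_d\e_1$, and after a further translation of size $O(\e_1)$ (which only changes constants) I may assume that the barycenter of $E$ is at the origin; a short argument on barycenters then gives $\Asymm E\le|\asymm E{B_1}|\le C_d\,\Asymm E$, so it suffices to bound $\en_\gamma(E)-\en_\gamma(B_1)$ from below by $C_1|\asymm E{B_1}|^2$ (and from above by $-C_1|\asymm E{B_1}|^2$ in the second regime).

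The heart of the proof is to replace $E$ by a nearly-spherical competitor $F$. For $\Hd$-a.e.\ $\omega\in\partial B_1$ the radial slice $E_\omega\defeq\{r>0:r\omega\in E\}$ satisfies $[0,1-\e_1]\subset E_\omega\subset[0,1+\e_1]$, so I set $\varrho(\omega)^d\defeq d\int_{E_\omega}r^{d-1}\dd r\in[(1-\e_1)^d,(1+\e_1)^d]$ and let $F$ be the set bounded by the normal graph $\{(1+u(x))x:x\in\partial B_1\}$ with $u(x)\defeq\varrho(x/|x|)-1$; then $|F|=|B_1|$, $\|u\|_{L^\infty(\partial B_1)}\le\e_1$, $|E\triangle F|\le C_d\,\Asymm E$, and a final $O(\e_1)$ translation puts the barycenter of $F$ at the origin, so that $F$ is admissible in Theorem~\ref{thm:fuglede} as soon as $\e_1$ is small enough that $C_d\e_1\le\e_0$. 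Next I would estimate the energy gap $\en_\gamma(E)-\en_\gamma(F)$ via the ray‑wise monotone transport map $T\colon E\to F$, which keeps the direction $x/|x|$ fixed, satisfies $T_\#(\chi_E\mathcal L^d)=\chi_F\mathcal L^d$ and $|T(x)-x|\le 2\e_1$; a change of variables writes $\nl_\sigma(E)-\nl_\sigma(F)=\int_E\int_E\bigl(|x-y|^\sigma-|T(x)-T(y)|^\sigma\bigr)\dd x\dd y$, and this is controlled because $\bigl||x-y|-|T(x)-T(y)|\bigr|\le 4\e_1$ and because the potentials $V_\sigma(x)\defeq\int_{B_1}|x-y|^\sigma\dd y$ are of class $C^1$ across $\partial B_1$ — and here is exactly where the hypothesis $\alpha<d-1$ re-enters, since it is equivalent to $|x-\,\cdot\,|^{-\alpha-1}\in L^1(B_1)$, i.e.\ to $\nabla V_{-\alpha}$ being bounded near $\partial B_1$. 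A further structural fact that makes the resulting boundary terms favourably signed is that $\gamma>\gamma_*$ forces the total potential $V_{-\alpha}+\gamma V_\beta=\psi(|x|)$ to satisfy $\psi'(1)>0$: a short computation gives $v_\sigma'(1)=\tfrac12\bigl(\mu_1(\sigma)-\lim_{k}\mu_k(\sigma)\bigr)$ for the radial profile $v_\sigma$ of $V_\sigma$, so that $\psi'(1)>0$ is precisely the $k\to\infty$ instance of the inequality $\gamma>\bigl(\mu_k(-\alpha)-\mu_1(-\alpha)\bigr)/\bigl(\mu_1(\beta)-\mu_k(\beta)\bigr)$ defining $\gamma_*$; this sign, combined with the pointwise identity $(\chi_E-\chi_{B_1})(x)\,(|x|-1)\ge 0$, also controls the linear part $\int_{\R^d}(\chi_E-\chi_{B_1})\psi(|x|)\dd x$ from below. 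Putting the gap estimate together with \eqref{eq:fuglede1} applied to $F$ yields $\en_\gamma(E)-\en_\gamma(B_1)\ge C_1(\Asymm E)^2$ for $\gamma>\gamma_*$; the case $\gamma<\gamma_{**}$ is entirely symmetric, starting from \eqref{eq:fuglede2} and using that $\psi'(1)<0$ there.

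The hard part is exactly this energy comparison between $E$ and its radial rearrangement $F$, with a remainder that can be reabsorbed against $(\Asymm E)^2$ \emph{uniformly} in $\gamma$ close to the thresholds; the bookkeeping must track carefully the mild singularity of the kernel $K_{-\alpha}$ (tamed by $\alpha<d-1$) as well as the cancellations coming from the constraint $|E|=|B_1|$ and from the barycenter normalization. It is also worth stressing what cannot be improved: unlike in \cite{FFMMM}, there is no ambient perimeter term, hence no regularizing mechanism allowing one to run the reduction starting from an $L^1$-neighbourhood of the ball, which is why the hypothesis has to be the Hausdorff-type inclusion \eqref{eq:linfty-neighbour}. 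Finally, once Theorem~\ref{thm:locmin_infinity} is established, the scaling argument at the end of Section~\ref{sec:locmin} — relating $\en$ on sets of mass $m$ to $\en_\gamma$ on sets of mass $|B_1|$, so that $m\gtrless m_*$ corresponds to $\gamma\gtrless\gamma_*$ and likewise for $m_{**}$, $\gamma_{**}$ — converts it into the local optimality statement of Theorem~\ref{thm:main}.
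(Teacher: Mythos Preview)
Your overall strategy---reduce $E$ to a nearly-spherical competitor via a Fusco--Pratelli type radial transport, invoke Theorem~\ref{thm:fuglede}, and use the sign of $\psi'(1)$ to control the reduction---is exactly the paper's. Your observation that $\psi'(1)=\tfrac12\bigl(\mu_1(-\alpha)-\mu_\infty(-\alpha)\bigr)+\tfrac{\gamma}{2}\bigl(\mu_1(\beta)-\mu_\infty(\beta)\bigr)$, so that $\gamma>\gamma_*$ forces $\psi'(1)>0$, is correct and is precisely Lemma~\ref{lem:potential}. But two steps in your sketch do not close.

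\medskip
\noindent\textbf{The asymmetry can collapse.} Your one-shot radial rearrangement $F$ (defined by $\varrho(\omega)^d=d\int_{E_\omega}r^{d-1}\dd r$) preserves the radial mass on each ray, so whenever $E$ carries both an excess $E_\omega\cap(1,1+\e_1)$ and a deficit $[1-\e_1,1]\setminus E_\omega$ of equal radial mass, one gets $\varrho(\omega)=1$ and hence $F=B_1$ on that ray. In particular $|F\triangle B_1|$ can be arbitrarily small compared with $|E\triangle B_1|$, and then Theorem~\ref{thm:fuglede} applied to $F$ gives no information. To recover the quadratic bound you would need $\en_\gamma(E)-\en_\gamma(F)\ge c\,|E\triangle F|^2$ (or a comparable inequality), and your transport sketch does not deliver this: the estimate $\bigl||x-y|-|T(x)-T(y)|\bigr|\le 4\e_1$ together with $C^1$-regularity of the potentials only yields $|\en_\gamma(E)-\en_\gamma(F)|\le C\e_1|E\triangle F|$, a \emph{linear} bound with a small prefactor, which is the wrong direction and the wrong power. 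The paper avoids this by not rearranging all the way to a graph in one step. Its Lemma~\ref{lemma:locmin_infinity} (the Fusco--Pratelli construction) produces an intermediate set $E_1$ with a built-in dichotomy: either $\Asymm{E_1}\ge\tfrac12\Asymm{E}$, or $\en_\gamma(E)-\en_\gamma(E_1)\ge\tilde c\,\Asymm{E}^2$. In the first case the subsequent passage $E_1\to E_2\to E_3$ to a nearly-spherical set preserves a fixed fraction of the asymmetry (inequality~\eqref{eq:locmin_infinity-7}), so Fuglede applies; in the second case the quadratic gain has already been harvested at the $E\to E_1$ step. This dichotomy is the missing idea in your argument.

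\medskip
\noindent\textbf{The barycenter fix is not a translation.} You translate $F$ by $O(\e_1)$ to put its barycenter at the origin, but after that translation $F$ is a normal graph over $\partial B_1(v)$ for some $v\neq0$, not over $\partial B_1$, so Theorem~\ref{thm:fuglede} does not apply as stated. The paper handles this differently: the whole construction $E\mapsto E_1(z)\mapsto E_2(z)\mapsto E_3(z)$ is carried out relative to a variable center $z\in B_{\e/2}$, and a continuity/fixed-point argument (Step~3, following \cite[Lemmas~2.13--2.14]{FusPra20}) selects $z$ so that the barycenter of $E_3(z)$ is $z$ itself. Your initial translation of $E$ does not substitute for this, because the radial rearrangement does not commute with translations and does not preserve the barycenter.
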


For the proof, we need to pass from the optimality of the ball among nearly-spherical sets, proved in Theorem~\ref{thm:fuglede}, to the larger class of sets uniformly close to the ball. The proof follows the construction in \cite[Proposition~2.10]{FusPra20}, where the same strategy is used to prove the quantitative maximality of the ball for the Riesz energy $E\mapsto \int_E\int_E \frac{1}{|x-y|^\alpha}$ (see also \cite[Proposition~5.12]{Asc22}, where the same is done for the attractive term $\int_E\int_E|x-y|^\beta$). The proof is based on a transport argument and on estimates on the variation of the energy which are strongly dependent on the monotonicity of the potential of the ball (defined in \eqref{eq:potential_ball} below). The combination of the two energies destroys the global monotonicity of the potential; however, what is actually needed is just the strict monotonicity in a neighbourhood of $\partial B_1$, which is still true in the case $\gamma\in(0,\gamma_{**})\cup(\gamma_{*},\infty)$, as Lemma~\ref{lem:potential} below shows. In view of this property, the proof of Theorem~\ref{thm:locmin_infinity} is almost identical to that in \cite{FusPra20,Asc22}.

\begin{lemma} \label{lem:potential}
Let $\alpha\in(0,d-1)$, $\beta>0$, $\gamma>0$, and define the \emph{potential of the unit ball} as the function
\begin{equation} \label{eq:potential_ball}
\psi(x) \coloneqq \int_{B_1}\frac{\dd y}{|x-y|^\alpha} + \gamma\int_{B_1}|x-y|^\beta \dd y.
\end{equation}
Then $\psi$ is a radial function of class $C^1$ and, writing with abuse of notation $\psi(x)=\psi(|x|)$,
\begin{align}
\psi'(1)>0 & \qquad\text{if }\gamma>\gamma_{*}, \label{eq:potential-derivative1}\\
\psi'(1)<0 & \qquad\text{if }\gamma<\gamma_{**}. \label{eq:potential-derivative2}
\end{align}
\end{lemma}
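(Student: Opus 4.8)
The plan is to compute the potential $\psi$ explicitly as a radial function and evaluate its derivative at $r=1$, then recognize the resulting expression in terms of the quantities $\nla(B_1)$, $\nlb(B_1)$ and the thresholds $\gamma_*$, $\gamma_{**}$. First I would observe that $\psi(x) = \int_{B_1}|x-y|^{-\alpha}\dd y + \gamma\int_{B_1}|x-y|^\beta\dd y$ is manifestly radial, since $B_1$ is rotationally symmetric, and it is of class $C^1$ away from $\partial B_1$ trivially; the $C^1$ regularity up to and across $\partial B_1$ follows because the singularity $|x-y|^{-\alpha}$ is integrable with $\alpha<d-1<d$, and in fact one can differentiate under the integral sign to get $\nabla\psi(x) = -\alpha\int_{B_1}\frac{x-y}{|x-y|^{\alpha+2}}\dd y + \gamma\beta\int_{B_1}(x-y)|x-y|^{\beta-2}\dd y$, which is continuous in $x$ for $\alpha<d-1$ (this is where the restriction $\alpha<d-1$, rather than just $\alpha<d$, is used).

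Next I would relate $\psi'(1)$ to the nonlocal mean curvature quantity $H_{\sigma,\partial B_1}$ from \eqref{eq:nlcurvature}. Note that $H_{\sigma,\partial B_1}(x) = 2\int_{B_1}|x-y|^\sigma\dd y$, so for $x\in\partial B_1$ the function $\psi$ restricted to the sphere of radius $r$ is $\tfrac12\bigl(H_{-\alpha,\partial B_r}\bigr)\big|_{\partial B_r, \text{relative to } B_1} + \tfrac{\gamma}{2} H_{\beta,\partial B_r}$ — more precisely, $\psi(r) = \tfrac12 \widetilde H_{-\alpha}(r) + \tfrac{\gamma}{2}\widetilde H_\beta(r)$ where $\widetilde H_\sigma(r) \defeq 2\int_{B_1}|x-y|^\sigma\dd y$ for any $|x|=r$. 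The key computation is then $\widetilde H_\sigma'(1)$. By the divergence-theorem argument already carried out in the excerpt for $c^2_{\sigma,\partial B_1}$ (see the displayed computation following \eqref{eq:nlcurvature_ball}), one has $\frac{\dd}{\dd r}\int_{B_1}|x-y|^\sigma\dd y\big|_{|x|=r}$ evaluated at $r=1$ equal to a multiple of $\nl_\sigma(B_1)$; alternatively, and perhaps more cleanly, I would use that for $x\in\partial B_r$, scaling gives $\int_{B_1}|x-y|^\sigma\dd y$ as an explicit hypergeometric-type function of $r$ whose derivative at $r=1$ can be computed. The cleanest route: differentiate the identity and use $\int_{\partial B_1}\psi\,\dd\Hd$ together with the relation between $\mu_1(\sigma)$ and $c^2_{\sigma,\partial B_1}$.

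Concretely, I expect $\psi'(1)$ to come out proportional to a linear combination of $\nla(B_1)$ and $\gamma\,\nlb(B_1)$ with the coefficients being exactly those appearing in the definition of $\gamma_{**}$ in \eqref{eq:gamma**}: namely $\psi'(1) = c_d\bigl(\gamma\,\beta(d+\beta)\nlb(B_1) - \alpha(d-\alpha)\nla(B_1)\bigr)$ for some positive dimensional constant $c_d$ (the sign being $-$ on the repulsive term because $K_{-\alpha}$ is decreasing, $+$ on the attractive term because $K_\beta$ is increasing). Then $\psi'(1)>0 \iff \gamma > \frac{\alpha(d-\alpha)}{\beta(d+\beta)}\cdot\frac{\nla(B_1)}{\nlb(B_1)} = \gamma_{**}$, which in particular gives $\psi'(1)>0$ for $\gamma>\gamma_*>\gamma_{**}$ and $\psi'(1)<0$ for $\gamma<\gamma_{**}$, establishing \eqref{eq:potential-derivative1}--\eqref{eq:potential-derivative2}.

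The main obstacle is the explicit evaluation of $\frac{\dd}{\dd r}\int_{B_1}|x-y|^\sigma\dd y$ at $r=1$: the integrand is singular when $\sigma=-\alpha<0$, so one cannot naively differentiate under the integral sign at $r=1$ (where $x\in\partial B_1$ touches the support). I would handle this by the divergence-theorem computation already present in the excerpt, rewriting $\int_{\partial B_1}\int_{\partial B_r}K_\sigma(|x-y|)(x-y)\cdot\nu\,\dd\Hd\dd\Hd$ type expressions, or equivalently by first establishing the formula for $r\neq 1$ (where differentiation under the integral is legitimate) via the scaling $\int_{B_1}|x-y|^\sigma \dd y = \int_{B_{1/r}}|e-z|^\sigma r^{d+\sigma}\dd z$ for $|x|=r$, $|e|=1$, and then passing to the limit $r\to1$ using the $C^1$ regularity established in the first step and the continuity of the resulting expression (which again relies on $\sigma=-\alpha>-(d-1)$). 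Once the formula for $\widetilde H_\sigma'(1)$ is in hand, combining the two values $\sigma=-\alpha$ and $\sigma=\beta$ and comparing with \eqref{eq:gamma**} is routine algebra.
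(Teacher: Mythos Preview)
Your approach—computing $\psi'(1)$ explicitly—is viable and genuinely different from the paper's argument, but your expected formula is wrong. The actual value is
\[
\psi'(1) = \frac{1}{4d\omega_d}\biggl(\gamma\,\frac{\beta(d+\beta)(2d+\beta)}{d-1+\beta}\,\nlb(B_1) - \frac{\alpha(d-\alpha)(2d-\alpha)}{d-1-\alpha}\,\nla(B_1)\biggr),
\]
so the threshold at which $\psi'(1)$ changes sign is not $\gamma_{**}$ but rather $\kappa(d,\alpha,\beta)$ as defined in \eqref{app:kappa} (equivalently, $\lim_{k\to\infty}\frac{\mu_k(-\alpha)-\mu_1(-\alpha)}{\mu_1(\beta)-\mu_k(\beta)}$). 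Your claim that the threshold equals $\gamma_{**}$ is therefore false: in fact $\gamma_{**}<\kappa$ always, since $\gamma_{**}=\kappa\cdot X_2$ with $X_2=\frac{d-1-\alpha}{d-1+\beta}<1$. Fortunately one also has $\kappa\leq\gamma_*$ (equality for $\beta\leq\beta_*$), because $\gamma_*=\kappa\cdot\sup_{k\geq2}X_k$ and $\sup_{k\geq2}X_k\geq\lim_kX_k=1$; so once the correct formula is in hand, the conclusion of the lemma still follows. You should check your derivative computation carefully—the missing factors $\frac{2d+\sigma}{d-1+\sigma}$ arise from the boundary integral that appears when you convert $\nabla_x\int_{B_1}|x-y|^\sigma\,dy$ at $|x|=1$ into a quantity involving $\nl_\sigma(B_1)$.

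The paper avoids this explicit computation altogether. It instead derives the identity
\[
\mathcal{Q}\en_\gamma(u) = 2\psi'(1)\|u\|_{L^2(\partial B_1)}^2 + 2\int_{\partial B_1}\int_{\partial B_1}\Bigl(\frac{1}{|x-y|^\alpha}+\gamma|x-y|^\beta\Bigr)u(x)u(y)\,\dd\Hd_x\dd\Hd_y
\]
via the divergence theorem (the same manipulation you propose for the ``main obstacle''), and then chooses a sequence $u_n$ with zero mean, unit $L^2$ norm, and cross term tending to zero. The already-established coercivity bounds \eqref{eq:IIvar_quadraticbound*}--\eqref{eq:IIvar_quadraticbound**} applied to $u_n$ then give the sign of $\psi'(1)$ directly. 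This soft argument trades your explicit constant for a cleaner logical dependence on the stability analysis of Section~\ref{subsect:stab}; your route, once corrected, has the advantage of pinning down the exact threshold $\kappa$ for the sign of $\psi'(1)$, which is sharper information than the lemma records.
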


\begin{proof}
The $C^1$-regularity of the first term in \eqref{eq:potential_ball} is standard and is proved, for instance, in \cite[Proposition~2.1]{BC14} (notice that here the assumption $\alpha<d-1$ is essential); the regularity of the second term is straightforward.

We now express the quadratic form $\mathcal{Q}\en_\gamma$ associated to the second variation of the functional, see \eqref{eq:quad_form}, in terms of the derivative of $\psi$. Recalling \eqref{eq:nlcurvature} we have for all $u\in L^2(\partial B_1)$
\begin{align*}
\mathcal{Q}\en_\gamma(u) & = - \int_{\partial B_1}\int_{\partial B_1} \biggl(\frac{1}{|x-y|^\alpha}+\gamma|x-y|^\beta\biggr)|u(x)-u(y)|^2 \dd\Hd_x\dd\Hd_y \\
& \qquad + \int_{\partial B_1} \Biggl( \int_{\partial B_1} \biggl(\frac{1}{|x-y|^\alpha}+\gamma|x-y|^\beta\biggr) |x-y|^2\dd\Hd_y \Biggr)(u(x))^2\dd\Hd_x\\
& = 2\int_{\partial B_1}\int_{\partial B_1} \biggl(\frac{1}{|x-y|^\alpha}+\gamma|x-y|^\beta\biggr)u(x)u(y) \dd\Hd_x\dd\Hd_y \\
% & \qquad - 2\int_{\partial B_1}\int_{\partial B_1} \biggl(\frac{1}{|x-y|^\alpha}+\gamma|x-y|^\beta\biggr)(u(x))^2 \dd\Hd_x\dd\Hd_y \\
% & \qquad + 2\int_{\partial B_1}\int_{\partial B_1} \biggl(\frac{1}{|x-y|^\alpha}+\gamma|x-y|^\beta\biggr)(u(x))^2 \dd\Hd_x\dd\Hd_y \\
& \qquad -2 \int_{\partial B_1} \Biggl( \int_{\partial B_1} \biggl(\frac{1}{|x-y|^\alpha}+\gamma|x-y|^\beta\biggr) x\cdot y\dd\Hd_y \Biggr)(u(x))^2\dd\Hd_x.
\end{align*}
Observing now that for $x\in\partial B_1$, by the divergence theorem
\begin{align*}
\int_{\partial B_1} \biggl(\frac{1}{|x-y|^\alpha}+\gamma|x-y|^\beta\biggr) x\cdot y\dd\Hd_y
& = \int_{B_1} \nabla_y \biggl(\frac{1}{|x-y|^\alpha}+\gamma|x-y|^\beta\biggr)\cdot x \dd y \\
& = -\int_{B_1} \nabla_x \biggl(\frac{1}{|x-y|^\alpha}+\gamma|x-y|^\beta\biggr)\cdot x \dd y \\
& = -\nabla\psi(x)\cdot x = -\psi'(1),
\end{align*}
we obtain
\begin{equation} \label{eq:IIvar-potential}
\mathcal{Q}\en_\gamma(u) = 2\psi'(1)\|u\|_{L^2(\partial B_1)}^2 + 2\int_{\partial B_1}\int_{\partial B_1} \biggl(\frac{1}{|x-y|^\alpha}+\gamma|x-y|^\beta\biggr)u(x)u(y) \dd\Hd_x\dd\Hd_y.
\end{equation}

By standard arguments, it is possible to construct a sequence $(u_n)_{n\in\N}\subset L^\infty(\partial B_1)$ such that $\int_{\partial B_1}u_n\dd\Hd=0$, $\|u_n\|_{L^2(\partial B_1)}=1$, and
$$
\lim_{n\to\infty}\int_{\partial B_1}\int_{\partial B_1} \biggl(\frac{1}{|x-y|^\alpha}+\gamma|x-y|^\beta\biggr)u_n(x)u_n(y) \dd\Hd_x\dd\Hd_y = 0.
$$ 
In view of this property, the conclusion of the lemma follows by \eqref{eq:IIvar-potential} and by the quadratic bounds \eqref{eq:IIvar_quadraticbound*}--\eqref{eq:IIvar_quadraticbound**}.
\end{proof}

\medskip

\begin{remark} \label{rmk:ball-noncrit}
The unit ball $B_1$ is a critical point for the relaxed problem \eqref{eq:minrelaxed} provided 
	\begin{equation} \label{eq:ball-crit}
	 \psi(r_1)\leq \psi(1) \leq \psi(r_2) \quad \text{for} \quad r_1<1<r_2.
	\end{equation}
%The sign of the derivative of the potential $\psi$ on the boundary of $B_1$ is determined by the competition between the attractive and repulsive terms. 
When $\alpha\in(0,d-1)$, for large $\gamma$, the attractive part dominates and $\psi^\pr(1)>0$ as shown above, suggesting that \eqref{eq:ball-crit} holds. This is indeed proved in \cite[Lemma~3.1]{FraLie21}. In the case $\alpha\in [d-1,d)$, instead, the derivative of the potential of the repulsive term blows up near the boundary of $B_1$. Due to this fact, no matter how large $\gamma$ is the potential $\psi$ has the opposite monotonicity than required by \eqref{eq:ball-crit}, and therefore the ball is never a critical point of \eqref{eq:minrelaxed} (see \cite[Remark~3.2]{FraLie21} for details).
\end{remark}

Thanks to the local strict monotonicity of $\psi$ in a neighbourhood of $\partial B_1$ proved in Lemma~\ref{lem:potential}, the proof of Theorem~\ref{thm:locmin_infinity} follows by repeating line by line the argument in \cite{FusPra20}. We just sketch here the general strategy discussing only the modifications needed in our setting.

\begin{lemma} \label{lemma:locmin_infinity}
There exist $\tilde{c}>0$ and $\tilde{\e}>0$ (depending on $d$, $\alpha\in(0,d-1)$, $\beta>0$, $\gamma>0$) with the following property. For every $\e\in(0,\tilde{\e})$ and for every measurable set $E\subset\R^d$ with $|E|=|B_1|$ and $B_{1-\e}(z)\subset E \subset B_{1+\e}(z)$, $z\in\R^d$, there exist functions $u^{\pm}:\partial B_1\to[0,\e)$ such that the set
$$
E_1 \defeq \Bigl\{ z + tx \,:\, x\in\partial B_1, \, t\in[0,1-u^-(x))\cup(1,1+u^+(x))\Bigr\}
$$
satisfies $|E_1|=|B_1|$ and
\begin{align}
\en_\gamma(E_1) \leq \en_\gamma(E) & \qquad\text{if }\gamma>\gamma_{*}, \label{eq:locmin_infinity-1}\\
\en_\gamma(E_1) \geq \en_\gamma(E) & \qquad\text{if }\gamma<\gamma_{**}. \label{eq:locmin_infinity-2}
\end{align}
Furthermore, either
\begin{equation} \label{eq:locmin_infinity-3}
\Asymm{E_1}\geq\frac12\Asymm{E},
\end{equation}
or
\begin{align}
\en_\gamma(E) - \en_\gamma(E_1) \geq \tilde{c} \,\Asymm{E}^2 & \qquad\text{if }\gamma>\gamma_{*}, \label{eq:locmin_infinity-4}\\
\en_\gamma(E_1) - \en_\gamma(E) \geq \tilde{c} \,\Asymm{E}^2 & \qquad\text{if }\gamma<\gamma_{**}. \label{eq:locmin_infinity-5}
\end{align}
\end{lemma}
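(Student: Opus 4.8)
The plan is to follow the transport construction of Fusco and Pratelli \cite[Proposition~2.10]{FusPra20}, adapted to our setting by replacing the global monotonicity of the Riesz potential with the \emph{local} strict monotonicity of $\psi$ near $\partial B_1$ proved in Lemma~\ref{lem:potential}. Without loss of generality assume $z=0$. Writing $E$ in polar coordinates around the origin, the uniform closeness $B_{1-\e}\subset E\subset B_{1+\e}$ means that for $\Hn$-a.e.\ direction $x\in\partial B_1$ the radial slice $E\cap\{tx:t\geq 0\}$ is contained in the annulus $t\in(1-\e,1+\e)$ but need not be an interval. The idea is to \textbf{redistribute mass radially, along each direction $x$, towards the sphere}: one splits the slice into its portion inside the unit ball and its portion outside, and replaces each by a single interval of the same one-dimensional (weighted) measure abutting $\partial B_1$ from inside and outside respectively. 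This defines $u^\pm(x)\in[0,\e)$ and the set $E_1$. The volume constraint $|E_1|=|B_1|$ is automatic because the rearrangement is measure-preserving on each slice (with the weight $t^{d-1}$), so $|E_1|=|E|=|B_1|$.

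The key step is the \textbf{energy monotonicity} \eqref{eq:locmin_infinity-1}--\eqref{eq:locmin_infinity-2}. First I would show that each of the two one-dimensional rearrangements (the ``inner'' one pushing mass outward toward $t=1$, and the ``outer'' one pushing mass inward toward $t=1$) does not increase $\en_\gamma$ when $\gamma>\gamma_*$ (resp.\ does not decrease it when $\gamma<\gamma_{**}$). The crucial ingredient here is that for $x$ in the annulus, the potential $\psi(|x|)=\int_{B_1}|x-y|^{-\alpha}\dd y+\gamma\int_{B_1}|x-y|^\beta\dd y$ is strictly increasing in $|x|$ on a neighbourhood of $1$ precisely when $\gamma>\gamma_*$ (and strictly decreasing when $\gamma<\gamma_{**}$), by \eqref{eq:potential-derivative1}--\eqref{eq:potential-derivative2}. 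One writes $\en_\gamma(E)=\int_E\psi\,\dd x+(\text{correction terms from the difference }E\triangle B_1)$; more precisely, following \cite{FusPra20}, one compares $\en_\gamma(E)$ with $\en_\gamma(E_1)$ by a layer-cake / transport argument in which each ``mass element'' of $E$ at radius $t$ is moved to a radius closer to $1$, and the change in energy is controlled by the monotonicity of $\psi$ on $(1-\e,1+\e)$ together with a uniform Lipschitz bound for the interaction kernels restricted to the annulus. Since $\e<\tilde\e$ can be taken small enough that $\psi'$ keeps a fixed sign with $|\psi'|\geq c>0$ on $(1-\e,1+\e)$, each elementary move strictly decreases (resp.\ increases) the energy, and summing over all of them gives \eqref{eq:locmin_infinity-1} (resp.\ \eqref{eq:locmin_infinity-2}).

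For the \textbf{asymmetry dichotomy}, one distinguishes two regimes. If the rearrangement $E\mapsto E_1$ moves only a small amount of mass, then $|E\triangle E_1|$ is small compared to $\Asymm{E}$, and since $\Asymm{\cdot}$ is $|\cdot\triangle\cdot|$-Lipschitz (with constant $1$, up to re-optimizing the translation) one gets $\Asymm{E_1}\geq\Asymm{E}-|E\triangle E_1|\geq\tfrac12\Asymm{E}$, which is \eqref{eq:locmin_infinity-3}. Otherwise, a definite amount of mass $|E\triangle E_1|\gtrsim\Asymm{E}$ is transported radially by a distance bounded below, and then the quantitative version of the monotonicity estimate above yields an energy gain (resp.\ loss) of order $|E\triangle E_1|^2\gtrsim\Asymm{E}^2$, which is \eqref{eq:locmin_infinity-4} (resp.\ \eqref{eq:locmin_infinity-5}); here one uses that on the annulus the kernel $\frac{1}{|x-y|^\alpha}+\gamma|x-y|^\beta$ has a potential with $\psi''$ bounded, so that the first-order gain from $\psi'\neq 0$ dominates and produces a genuinely quadratic lower bound after also accounting for the self-interaction correction as in \cite[Proposition~2.10]{FusPra20}.

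The main obstacle is the energy-monotonicity step: unlike in \cite{FusPra20,Asc22}, where each kernel ($r^{-\alpha}$ or $r^\beta$ separately) has a \emph{globally} monotone potential and the rearrangement inequality is essentially classical, here the combined potential $\psi$ is only monotone \emph{locally} near $\partial B_1$. One must therefore be careful that the transport moves mass only within the thin annulus $(1-\e,1+\e)$ — which is guaranteed by the hypothesis $B_{1-\e}(z)\subset E\subset B_{1+\e}(z)$ — and quantify the sign and size of $\psi'$ on that annulus via Lemma~\ref{lem:potential} (which in turn relied on the quadratic bounds \eqref{eq:IIvar_quadraticbound*}--\eqref{eq:IIvar_quadraticbound**}). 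Once the restriction to the annulus is in place, the interaction kernels $K_{-\alpha}$ and $K_\beta$ are smooth and Lipschitz there, so the estimates on the energy variation carry over verbatim from \cite[Proof of Proposition~2.10]{FusPra20}, and the two cases $\gamma>\gamma_*$ and $\gamma<\gamma_{**}$ are handled symmetrically by reversing inequalities.
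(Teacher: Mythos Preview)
Your proposal is correct and follows the same approach as the paper: both invoke the Fusco--Pratelli radial transport construction \cite[Lemma~2.11]{FusPra20} (and its attractive analogue \cite[Lemma~5.11]{Asc22}), replacing the global monotonicity of the potential by the local strict monotonicity of $\psi$ near $\partial B_1$ furnished by Lemma~\ref{lem:potential}. The paper's own proof is in fact only a sketch that cites \cite[(2.29)--(2.30)]{FusPra20}, \cite[Lemma~2.4]{FusPra20}, and \cite[Lemma~5.5]{Asc22} for the relevant energy-variation estimates, so your write-up is already more detailed than what appears there.
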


\begin{proof}[Proof (sketch)]
The construction of the set $E_1$ is the same as in \cite[Lemma~2.11]{FusPra20}, \cite[Lemma~5.11]{Asc22}, and it is straightforward to check that the same proof can be reproduced in our setting with minor changes.

In particular, by using Lemma~\ref{lem:potential} we deduce that there exists $\tilde{\e}>0$ such that $\inf\{\psi'(r)\,:\, r\in(1-\tilde{\e},1+\tilde{\e})\}>0$ if $\gamma>\gamma_{*}$, and $\sup\{\psi'(r)\,:\, r\in(1-\tilde{\e},1+\tilde{\e})\}<0$ if $\gamma<\gamma_{**}$. This is what is needed to obtain the estimate \cite[(2.30)]{FusPra20} (in the case $\gamma<\gamma_{**}$, whereas in the case $\gamma>\gamma_{*}$ one has the opposite inequality).

Moreover, the bounds \cite[(2.29)]{FusPra20} are valid also in our setting, using \cite[Lemma~2.4]{FusPra20} and \cite[Lemma~5.5]{Asc22}.
\end{proof}

\begin{proof}[Proof of Theorem~\ref{thm:locmin_infinity} (sketch)]
Fix $\e<\tilde{\e}$, where $\tilde{\e}$ is given by Lemma~\ref{lemma:locmin_infinity} (later in the proof we will posssibly reduce the value of $\e$). Let $E\subset\R^d$ with $|E|=|B_1|$ satisfy
\begin{equation*}
B_{1-\e^2}(x_0) \subset E \subset B_{1+\e^2}(x_0)
\end{equation*}
(without loss of generality we assume $x_0=0$).

Fix now any $z\in\R^d$ with $|z|<\frac{\e}{2}$. We have that $B_{1-\e}(z)\subset E\subset B_{1+\e}(z)$ with $\e<\tilde{\e}$, and we can therefore construct the set $E_1=E_1(z)$ using Lemma~\ref{lemma:locmin_infinity}. We want to further modify $E_1$ in order to obtain a nearly-spherical set, so that we can exploit the result in Theorem~\ref{thm:fuglede}. We also fix a small number $\eta>0$, and we divide the construction into three steps.

\medskip\noindent\textit{Step 1: replace $u^{\pm}$ by locally constant functions.} By approximation, we can find two new functions $\tilde{u}^\pm:\partial B_1\to[0,\tilde{\e})$ as close as we want to $u^\pm$ in $L^\infty(\partial B_1)$, such that $\partial B_1$ can be decomposed as a disjoint union of finitely many measurable sets $\partial B_1=\bigcup_i U_i$, $\tilde{u}^\pm\equiv u^\pm_i$ is constant on each $U_i$, and $\diam(U_i)\leq \min\{u^+_i,u^-_i\}$ for each $i$ such that $\min\{u^+_i,u^-_i\}>0$. Moreover, by defining the set $E_2=E_2(z)$ as 
$$
E_2 \defeq \Bigl\{ z + tx \,:\, x\in\partial B_1, \, t\in[0,1-\tilde{u}^-(x))\cup(1,1+\tilde{u}^+(x))\Bigr\},
$$
we can guarantee that $|E_2|=|B_1|$, 
\begin{equation} \label{eq:locmin_infinity-6}
\Asymm{E_2}\geq\frac13\Asymm{E_1}, \qquad \en_\gamma(E_1)-\eta \leq \en_\gamma(E_2) \leq \en_\gamma(E_1)+\eta.
\end{equation}

\medskip\noindent\textit{Step 2: construction of the nearly-spherical set.}
We next define a function $u:\partial B_1\to(-\e,\e)$ on each set $U_i$ as follows. If $\min\{u^+_i,u^-_i\}=0$, we set $u\defeq u^+_i$ if $u^-_i=0$, and $u\defeq -u^-_i$ if $u^+_i=0$. If instead $\min\{u^+_i,u^-_i\}>0$, we write $U_i$ as the disjoint union of two sets $L_i$, $R_i$ such that $\Hd(L_i)(1-(1-u_i^-)^d)=\Hd(R_i)((1+u_i^+)^d-1)$ and we define $u\defeq \chi_{L_i}u_i^+ - \chi_{R_i}u_i^-$ on $U_i=L_i\cup R_i$.

We then let $E_3=E_3(z)$ be the nearly-spherical set around $B_1(z)$ corresponding to $u$, according to \eqref{eq:nearly_spherical}. Then by construction $|E_3|=|B_1|$ and, arguing as in the proof of \cite[Proposition~2.10]{FusPra20} (see also \cite[Proposition~5.12]{Asc22})
\begin{equation} \label{eq:locmin_infinity-7}
|\asymm{E_3}{B_1(z)}| \geq \frac12|\asymm{E_2}{B_1(z)}| \geq \frac12\Asymm{E_2} \xupref{eq:locmin_infinity-6}{\geq} \frac{1}{6}\Asymm{E_1},
\end{equation}
and (by possibly reducing the value of $\e$)
\begin{align}
\en_\gamma(E_3) - \en_\gamma(E_2) \leq 0 & \qquad\text{if }\gamma>\gamma_{*}, \label{eq:locmin_infinity-8}\\
\en_\gamma(E_3) - \en_\gamma(E_2) \geq 0 & \qquad\text{if }\gamma<\gamma_{**}. \label{eq:locmin_infinity-9}
\end{align}

\medskip\noindent\textit{Step 3: adjustment of the barycenter.}
The sets $E_1(z)$, $E_2(z)$, $E_3(z)$ constructed in the previous steps depend continuously on $z\in B_{\e/2}$. In order to apply Theorem~\ref{thm:fuglede}, we need a final modification of the set $E_3$ so that its barycenter coincides with $z$. This can be done as in \cite[Lemma~2.13 and Lemma~2.14]{FusPra20}: by the same argument, it is possible to find $z$ with $|z|<\frac{\e}{2}$ such that the barycenter of $E_3(z)$ is $z$ itself.

\medskip\noindent\textit{Conclusion of the proof.}
Summing up what we have done so far, choosing $\e_1=\e^2$ and starting from a set $E$ satisfying \eqref{eq:linfty-neighbour} (without loss of generality $x_0=0$) we have consecutively constructed three sets $E_1$, $E_2$, $E_3$, with $E_3$ nearly-spherical around a ball $B_1(z)$ and with barycenter at the point $z$. In particular, Theorem~\ref{thm:fuglede} applies to $E_3-z$.

Let us conclude the proof by showing \eqref{eq:locmin_infinity1} in the case $\gamma>\gamma_{*}$. We distinguish two cases: in the first case, the set $E_1$ (constructed by Lemma~\ref{lemma:locmin_infinity}) satisfies \eqref{eq:locmin_infinity-3}. Then
\begin{align*}
\en_\gamma(E) - \en_\gamma(B_1)
& \xupref{eq:locmin_infinity-1}{\geq} \en_\gamma(E_1)-\en_\gamma(B_1)
\xupref{eq:locmin_infinity-6}{\geq} \en_\gamma(E_2)-\en_\gamma(B_1) -\eta \\
& \xupref{eq:locmin_infinity-8}{\geq} \en_\gamma(E_3)-\en_\gamma(B_1) -\eta
\xupref{eq:fuglede1}{\geq} C_0'\Bigl(\frac{\gamma}{\gamma_{*}}-1\Bigr) |\asymm{E_3}{B_1(z)}|^2 - \eta \\
& \xupref{eq:locmin_infinity-7}{\geq} \frac{C_0'}{36}\Bigl(\frac{\gamma}{\gamma_{*}}-1\Bigr) \Asymm{E_1}^2 - \eta
\xupref{eq:locmin_infinity-3}{\geq} \frac{C_0'}{144}\Bigl(\frac{\gamma}{\gamma_{*}}-1\Bigr) \Asymm{E}^2 - \eta .
\end{align*}
Since $\eta>0$ is arbitrary, we obtain \eqref{eq:locmin_infinity1}. If instead the set $E_1$ does not satisfy \eqref{eq:locmin_infinity-3}, then \eqref{eq:locmin_infinity-4} holds. In this case
\begin{align*}
\en_\gamma(E) - \en_\gamma(B_1)
& \xupref{eq:locmin_infinity-4}{\geq} \en_\gamma(E_1)-\en_\gamma(B_1) + \tilde{c} \, \Asymm{E}^2 \\
& \xupref{eq:locmin_infinity-6}{\geq} \en_\gamma(E_2)-\en_\gamma(B_1) -\eta + \tilde{c} \, \Asymm{E}^2 \\
& \xupref{eq:locmin_infinity-8}{\geq} \en_\gamma(E_3)-\en_\gamma(B_1) -\eta + \tilde{c} \, \Asymm{E}^2 \\
& \xupref{eq:fuglede1}{\geq} - \eta + \tilde{c} \, \Asymm{E}^2,
\end{align*}
and again we obtain \eqref{eq:locmin_infinity1} since $\eta>0$ is arbitrary.
This completes the proof of \eqref{eq:locmin_infinity1}; the proof of \eqref{eq:locmin_infinity2} is completely analogous, using the corresponding estimates valid in the case $\gamma<\gamma_{**}$.
\end{proof}

\begin{proof}[Proof of Theorem~\ref{thm:main}]
By scaling, if $|E|=m$ then the set $\widetilde{E}\defeq\bigl(\frac{\omega_d}{m}\bigr)^{1/d}E$ satisfies $|\widetilde{E}|=|B_1|$ and
\begin{equation*}
\en_1(E) = \Bigl(\frac{m}{\omega_d}\Bigr)^{2-\frac{\alpha}{d}}\en_\gamma(\widetilde{E}) \qquad\text{with}\quad \gamma\defeq \Bigl(\frac{m}{\omega_d}\Bigr)^{\frac{\alpha+\beta}{d}}.
\end{equation*}
In view of this identity the theorem follows by Theorem~\ref{thm:stability} and Theorem~\ref{thm:locmin_infinity}.
\end{proof}

%%%%%%%%%%%%%%%%%%%%%%%%%%%%%%%%%%%%%%%%%%%%%%%%%%%%%
%%%%%%%%%%%%%%%%%%%%%%%%%%%%%%%%%%%%%%%%%%%%%%%%%%%%%
%%%%%%%%%%%%%%%%%%%%%%%%%%%%%%%%%%%%%%%%%%%%%%%%%%%%%
%%%%%%%%%%%%%%%%%%%%%%%%%%%%%%%%%%%%%%%%%%%%%%%%%%%%%
%%%%%%%%%%%%%%%%%%%%%%%%%%%%%%%%%%%%%%%%%%%%%%%%%%%%%

\section{A necessary condition for \texorpdfstring{$L^1$}{L1}-local minimality}\label{sec:L1minimality}

A natural question is whether the stability of the ball for $\gamma>\gamma_{*}$ implies its local minimality among small $L^1$-perturbations, that is whether $\en_\gamma(E)>\en_\gamma(B_1)$ for all measurable set $E\subset\R^d$ such that $|E|=|B_1|$ and $\Asymm{E}$ is sufficiently small.

An argument by Frank and Lieb \cite{FraLie21} allows to pass from sets with small asymmetry to sets which are uniformly close to the ball (in the sense of \eqref{eq:unif-ball}), and this idea is crucial in their proof of the global minimality of the ball for large $\gamma$. A similar argument is also used by Fusco and Pratelli \cite{FusPra20} for their proof of the quantitative Riesz inequality. However, both arguments are based on the \emph{global} monotonicity of the potential $\psi$ of the unit ball (see \eqref{eq:potential_ball}), which is valid only in the asymptotic regime $\gamma\gg1$ and is not necessarily true for all $\gamma>\gamma_*$.

Notice that, for $\gamma<\gamma_{**}$, it is never possible to extend the local maximality of the ball to an $L^1$-neighbourhood, as observed in the Introduction. 

The following proposition gives a necessary condition for the unit ball to be an $L^1$-local minimizer in terms of its potential $\psi$. In Remark~\ref{rmk:L1_min} we show that there are values of the parameters $\alpha$ and $\beta$ such that this necessary condition is \emph{not} satisfied for some $\gamma>\gamma_{*}$: hence, in these cases the unit ball is a stable set, but is not a local minimizer with respect to the $L^1$-topology.

\begin{proposition}\label{prop:L1-min}
Let $\psi$ be the potential of the unit ball, defined in \eqref{eq:potential_ball}. Assume that there exists $r\in(0,1)$ such that $\psi(r)>\psi(1)$, or that there exists $R>1$ such that $\psi(R)<\psi(1)$. Then for every $\delta>0$ there exists a set $E\subset\R^d$ such that $|E|=|B_1|$, $|\asymm{E}{B_1}|<\delta$, and $\en_\gamma(E)<\en_\gamma(B_1)$. In particular, $B_1$ is not an $L^1$-local minimizer.
\end{proposition}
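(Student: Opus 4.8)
The plan is to refute $L^1$-local minimality by exhibiting, for each $\delta>0$, an explicit competitor obtained from $B_1$ by transporting a small amount of mass between two thin shells --- one where the potential $\psi$ is comparatively large, one where it is comparatively small. The guiding heuristic is that such a transport changes the energy, to leading order, by twice the transported mass times the difference of the values of $\psi$ on the two shells; hence any failure of the monotonicity of $\psi$ near $\partial B_1$ produces a strict decrease of $\en_\gamma$.

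First I would record, using the hypothesis together with the continuity of $\psi$ (which holds by Lemma~\ref{lem:potential}, where in fact $\psi\in C^1(\R^d)$), the existence of two radii $0<r<1<R$ with $\psi(r)>\psi(R)$: if $\psi(r)>\psi(1)$ for some $r\in(0,1)$, then $\psi(R)<\psi(r)$ for every $R>1$ sufficiently close to $1$; symmetrically if $\psi(R)<\psi(1)$ for some $R>1$. I emphasise that the comparison radius $R$ \emph{must} be taken just outside $\partial B_1$ (and not far away): because of the attractive term one has $\psi(R)\to+\infty$ as $R\to+\infty$, so mass cannot profitably be sent to infinity, and it is exactly the regularity of $\psi$ across $\partial B_1$ --- hence the standing assumption $\alpha<d-1$ --- that makes $\psi(R)$ close to $\psi(1)$ when $R$ is close to $1$. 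Then, fixing points $p,q$ on a common ray with $|p|=r$, $|q|=R$, I would set
\[
E \defeq \bigl(B_1\setminus B_\rho(p)\bigr)\cup B_\rho(q)
\]
for $\rho>0$ small enough that $B_\rho(p)\subset B_1$, $B_\rho(q)\cap B_1=\emptyset$, $B_\rho(p)\cap B_\rho(q)=\emptyset$, and $2\omega_d\rho^d<\delta$. This already gives $|E|=|B_1|$ and $|\asymm{E}{B_1}|=2\omega_d\rho^d<\delta$.

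The core of the argument is the expansion of $\en_\gamma(E)-\en_\gamma(B_1)$. Setting $A=B_\rho(p)$, $B=B_\rho(q)$, and $k(z)\defeq|z|^{-\alpha}+\gamma|z|^\beta$, and decomposing the double integral defining $\en_\gamma(E)$ over $E=(B_1\setminus A)\cup B$, together with $\psi(x)=\int_{B_1}k(x-y)\dd y$, one obtains the exact identity
\[
\en_\gamma(E)-\en_\gamma(B_1) = -2\int_A\psi(x)\dd x + 2\int_B\psi(x)\dd x + \en_\gamma(A) + \en_\gamma(B) - 2\int_A\int_B k(x-y)\dd x\dd y.
\]
Since $\psi$ is Lipschitz near the spheres $\{|x|=r\}$ and $\{|x|=R\}$, the first two terms equal $-2\omega_d\rho^d\bigl(\psi(r)-\psi(R)\bigr)+O(\rho^{d+1})$, a negative quantity of order exactly $\rho^d$. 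By scaling, $\en_\gamma(A)=\en_\gamma(B)=\rho^{2d-\alpha}\nla(B_1)+\gamma\,\rho^{2d+\beta}\nlb(B_1)=O(\rho^{2d-\alpha})$, while the (nonnegative) last term is $O(\rho^{2d})$ because $A$ and $B$ remain at a fixed positive distance. Since $\alpha<d$ one has $2d-\alpha>d$, so all the corrections are $o(\rho^d)$; therefore for $\rho$ small enough the sign is governed by the leading term and $\en_\gamma(E)<\en_\gamma(B_1)$. As $\delta>0$ was arbitrary, $B_1$ is not an $L^1$-local minimizer.

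In my view the only genuinely delicate points are the two already highlighted: (i) the observation that the displaced mass has to be placed in a thin shell immediately adjacent to $\partial B_1$ rather than ``at infinity'', which is where Lemma~\ref{lem:potential} (continuity of $\psi$, and thus the hypothesis $\alpha<d-1$) is essential; and (ii) checking that the self-interaction of the transported balls, which involves the singular kernel $|x-y|^{-\alpha}$ and scales like $\rho^{2d-\alpha}$, remains negligible against the first-order gain of order $\rho^d$ --- this is exactly the content of $\alpha<d$. Everything else reduces to routine perturbative estimates, entirely analogous to the first-variation computations of Section~\ref{sec:stability}.
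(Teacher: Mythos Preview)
Your proposal is correct and follows essentially the same approach as the paper: both construct a competitor by removing a small ball $B_\rho(p)$ from inside $B_1$ (where the potential is high) and adding back an equal-volume ball $B_\rho(q)$ outside (where the potential is low), then expand the energy difference and observe that the leading term $-2\omega_d\rho^d(\psi(r)-\psi(R))$ dominates the self-interaction corrections of order $\rho^{2d-\alpha}$. The only cosmetic differences are that you unify the two cases of the hypothesis into a single pair $0<r<1<R$ with $\psi(r)>\psi(R)$ via continuity, and that you keep the outer center at a fixed $R$ while the paper places it at $(1+\delta)e_1$; neither changes the substance of the argument.
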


\begin{proof}
Assume that $\psi(r)>\psi(1)$ for some $r\in(0,1)$. Then we can find $\eta>0$ and $\e_1,\e_2>0$ (with $\e_1+\e_2<1-r$) such that
\begin{equation} \label{proof:L1-min1}
\min_{|\rho-r|\leq\e_1}\psi(\rho) \geq \max_{|\rho-1|\leq \e_2}\psi(\rho) + \eta.
\end{equation}

Obviously, we only need to prove the statement for all $\delta$ sufficiently small. Let $\delta>0$ be such that $\delta<\min\{\e_1,\frac{\e_2}{2}\}$ and consider the two balls $D_1\coloneqq B_{\delta}(re_1)$, $D_2\coloneqq B_\delta((1+\delta)e_1)$. Notice that $D_1\subset B_1$ and $D_2\cap B_1=\emptyset$, and that $r-\e_1<|x|<r+\e_1$ for every $x\in D_1$, and $1<|y|<1+\e_2$ for every $y\in D_2$.

We claim that the set $E\coloneqq (B_1\setminus D_1)\cup D_2$ satisfies the conditions in the statement. Indeed, clearly $|E|=|B_1|$ and $|\asymm{E}{B_1}|=2|B_\delta|$. Moreover, denoting by
\begin{equation*}
\I(F,G) \coloneqq \int_F\int_G \biggl( \frac{1}{|x-y|^\alpha} + \gamma |x-y|^\beta \biggr)\dd x\dd y,
\qquad
\I(F) \coloneqq \I(F,F),
\end{equation*}
we have that
\begin{align*}
\en_\gamma(E)-\en_\gamma(B_1)
& = \I(B_1\setminus D_1) + \I(D_2) + 2\I(B_1\setminus D_1,D_2) - \I(B_1) \\
& = \I(B_1) - \I(D_1) - 2\I(B_1\setminus D_1,D_1) + \I(D_2) + 2\I(B_1\setminus D_1,D_2) - \I(B_1) \\
& = 2\I(B_1\setminus D_1,D_2) - 2\I(B_1\setminus D_1,D_1) \\
& = 2\I(B_1,D_2)-2\I(B_1,D_1) - 2\I(D_1,D_2) + 2\I(D_1).
\end{align*}
Observe now that $\I(D_1,D_2)\geq0$ and that $\I(D_1)\leq C(d,\alpha,\beta)\delta^{2d-\alpha}$ by a simple scaling argument. Then
\begin{align*}
\en_\gamma(E)-\en_\gamma(B_1)
& \leq 2\int_{D_2}\psi(x)\dd x - 2\int_{D_1}\psi(x)\dd x + 2C(d,\alpha,\beta)\delta^{2d-\alpha}\\
& \leq 2 \Bigl(\max_{D_2} \psi - \min_{D_1}\psi\Bigr)|B_\delta| + 2C(d,\alpha,\beta)\delta^{2d-\alpha}\\
& \xupref{proof:L1-min1}{\leq} -2\eta|B_\delta| + 2C(d,\alpha,\beta)\delta^{2d-\alpha}.
\end{align*}
Since $2d-\alpha>d$, we conclude that $\en_\gamma(E)-\en_\gamma(B_1)<0$ if $\delta$ is small enough.

The proof in the case $\psi(R)<\psi(1)$ for some $R>1$ follows a similar construction. Indeed, in this case we can find $\eta>0$ and $\e_1,\e_2>0$ such that
\begin{equation*}
\max_{|\rho-R|\leq\e_1}\psi(\rho) \leq \min_{|\rho-1|\leq \e_2}\psi(\rho) - \eta,
\end{equation*}
and construct the competitor as $E\coloneqq (B_1\setminus B_\delta((1-\delta)e_1))\cup B_\delta(R e_1)$. The details are left to the reader.
\end{proof}

\begin{remark}\label{rmk:L1_min}
In the case $d=3$, $\alpha=1$, $\beta=4$, the computation in \cite{Lop19} shows that the unit ball is a global minimizer for $\en_\gamma$ if and only if $\gamma\geq\gamma_{\rm ball}=\frac{1}{6}$, and that for $\gamma<\gamma_{\rm ball}$ the necessary condition in Proposition~\ref{prop:L1-min} is not satisfied (see Figure~\ref{fig:potential} for a numerical plot of the potential $\psi$ in this case). For these values of the parameters the stability threshold is $\gamma_{*}=\frac{1}{8}<\gamma_{\rm ball}$. Therefore for $\gamma\in(\gamma_*,\gamma_{\rm ball})=(\frac{1}{8},\frac{1}{6})$ the ball is stable but is not an $L^1$-local minimizer. A similar explicit computation can be made in the case $d=3$, $\alpha=1$, $\beta=3$. 
%In the case $d=3$, $\alpha=1$, $\beta=3$, the computation in \cite{Lop19} shows that the unit ball is a global minimizer for $\en_\gamma$ if and only if $\gamma\geq\gamma_{\rm ball}=\frac{7}{25}$, and that for $\gamma<\gamma_{\rm ball}$ the necessary condition in Proposition~\ref{prop:L1-min} is not satisfied (see Figure~\ref{fig:potential} for a numerical plot of the potential $\psi$ in this case). For these values of the parameters the stability threshold is $\gamma_{*}=\frac{35}{144}<\gamma_{\rm ball}$. Therefore for $\gamma\in(\gamma_*,\gamma_{\rm ball})=(\frac{35}{144},\frac{7}{25})$ the ball is stable but is not an $L^1$-local minimizer. A similar explicit computation can be made in the case $d=3$, $\alpha=1$, $\beta=4$. 
\end{remark}

\begin{figure}
	\begin{center}
		\includegraphics[width=7cm]{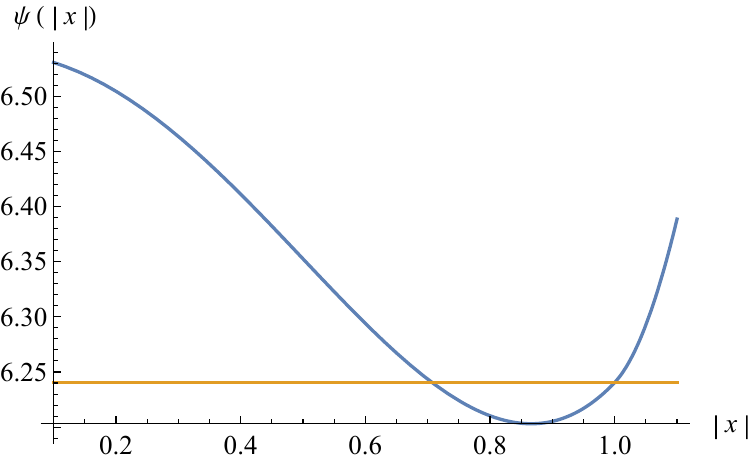}
	\end{center}
	\caption{Numerical plot of the potential $\psi$ of the unit ball, defined in \eqref{eq:potential_ball}, in the case $d=3$, $\alpha=1$, $\beta=4$ and $\gamma=\frac{1}{7}$. Notice that this value of $\gamma$ is larger than the stability threshold $\gamma_{*}=\frac{1}{8}$ (see Remark~\ref{rmk:L1_min}) and that the potential does not satisfy the necessary condition in Proposition~\ref{prop:L1-min}.}
	\label{fig:potential}
\end{figure}

%%%%%%%%%%%%%%%%%%%%%%%%%%%%%%%%%%%%%%%%%%%%%%%%%%%%%
%%%%%%%%%%%%%%%%%%%%%%%%%%%%%%%%%%%%%%%%%%%%%%%%%%%%%
%%%%%%%%%%%%%%%%%%%%%%%%%%%%%%%%%%%%%%%%%%%%%%%%%%%%%
%%%%%%%%%%%%%%%%%%%%%%%%%%%%%%%%%%%%%%%%%%%%%%%%%%%%%
%%%%%%%%%%%%%%%%%%%%%%%%%%%%%%%%%%%%%%%%%%%%%%%%%%%%%

\appendix
\section{Explicit values of the stability thresholds}\label{sec:appendix}

Here we complete the proof of Theorem~\ref{thm:stability} by computing the values of the constants $\gamma_{*}$ and $\gamma_{**}$, defined in \eqref{eq:gammatilde*} and \eqref{eq:gammatilde**} respectively, and showing that the identities \eqref{eq:gamma*} and \eqref{eq:gamma**} hold.

From the definition \eqref{eq:mu} of $\mu_k(\sigma)$ and a straightforward computation, we can write for all $k\geq2$
\begin{equation} \label{app:mu_k}
\frac{\mu_k(-\alpha)-\mu_1(-\alpha)}{\mu_1(\beta)-\mu_k(\beta)} = \kappa(d,\alpha,\beta) X_k,
\end{equation}
where
\begin{equation} \label{app:X_k}
X_k \defeq \frac{1-\displaystyle\prod_{j=1}^{k-1}\frac{\left(j+\frac{\alpha}{2}\right)}{\left(j+\frac{2d-2-\alpha}{2}\right)}}{1-\displaystyle\prod_{j=1}^{k-1}\frac{\left(j-\frac{\beta}{2}\right)}{\left(j+\frac{2d-2+\beta}{2}\right)}}
\end{equation}
and
\begin{equation} \label{app:kappa}
\kappa(d,\alpha,\beta) \defeq 2^{-(\alpha+\beta)} \cdot \frac{\alpha(2d-2+\beta)}{\beta(2d-2-\alpha)} \cdot  \frac{\Gamma\bigl(\frac{d-1-\alpha}{2}\bigr)\Gamma\bigl(\frac{2d-2+\beta}{2}\bigr)}{\Gamma\bigl(\frac{d-1+\beta}{2}\bigr)\Gamma\bigl(\frac{2d-2-\alpha}{2}\bigr)}>0.
\end{equation}
Hence to determine the value of the constants $\gamma_*$ and $\gamma_{**}$ we need to study the behaviour of the sequence $(X_k)_{k\geq2}$, and in particular to compute its supremum and its infimum. The following analysis is similar to the one in \cite[Appendix~C]{FFMMM}.

\begin{figure}
	\begin{center}
		\includegraphics[width=6.5cm]{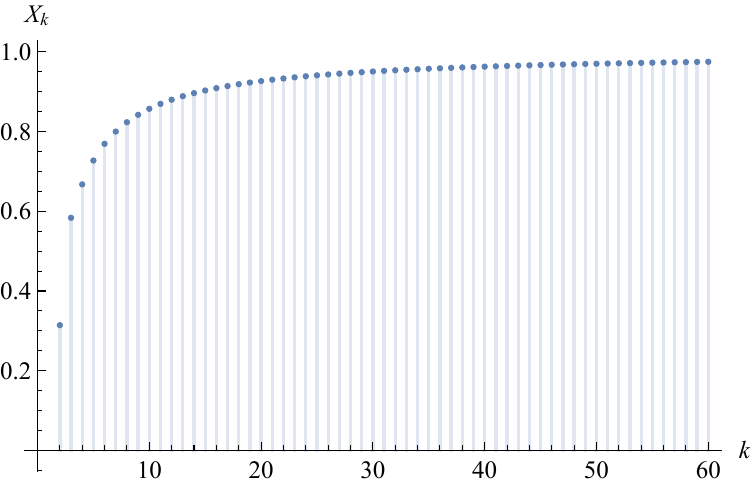}
		\hspace{1cm}
		\includegraphics[width=6.5cm]{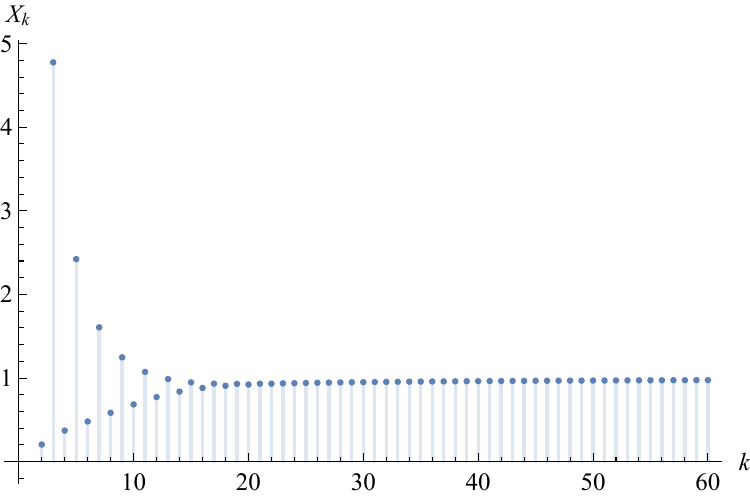}
	\end{center}
	\caption{Numerical plot of the first points of sequence $(X_k)_{k\geq2}$, defined in \eqref{app:X_k}, for two different values of $\beta$, for the choice of parameters $d=3$ and $\alpha=1$ (for which $\beta_*=22$). Left: the case $\beta<\beta_*$ ($\beta=5$). Right: the case $\beta>\beta_*$ ($\beta=155$).}
	\label{fig:sequence}
\end{figure}

\begin{lemma} \label{lem:appendix}
We have that
\begin{equation} \label{eq:sup-inf-X_k}
\inf_{k\geq 2}X_k = X_2, \qquad
\sup_{k\geq 2}X_k=
\begin{cases}
X_3 & \text{if }\beta \geq \beta_*(d,\alpha),\\
\displaystyle\lim_{k\to\infty}X_k=1 & \text{if } \beta < \beta_*(d,\alpha),
\end{cases}
\end{equation}
where $\beta_*(d,\alpha)$ is the constant defined in \eqref{eq:beta*}.
\end{lemma}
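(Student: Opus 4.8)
The plan is to study the sequence $(X_k)_{k\geq2}$ defined in \eqref{app:X_k} by analyzing its monotonicity properties. Write $X_k = N_k/D_k$, where $N_k \defeq 1-\prod_{j=1}^{k-1}\frac{j+\frac{\alpha}{2}}{j+\frac{2d-2-\alpha}{2}}$ and $D_k \defeq 1-\prod_{j=1}^{k-1}\frac{j-\frac{\beta}{2}}{j+\frac{2d-2+\beta}{2}}$. First I would record the elementary facts: since $\alpha\in(0,d-1)$ we have $0<j+\frac{\alpha}{2}<j+\frac{2d-2-\alpha}{2}$ for every $j\geq1$, so the product defining $N_k$ is a strictly decreasing sequence of numbers in $(0,1)$ tending to $0$; hence $N_k$ is strictly increasing, $N_k\in(0,1)$, and $N_k\to1$. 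Similarly, since $\beta>0$, for $j\geq1$ we have $|j-\frac{\beta}{2}|<j+\frac{2d-2+\beta}{2}$ (the denominator exceeds the numerator's absolute value by $d-1+\beta>0$), so $D_k\to1$ as well; moreover one checks that $D_k>0$ for all $k\geq2$ (the product, even when it has a negative factor $j=1$ with $\beta>2$, has absolute value $<1$), so the ratio is well-defined and positive, and $X_k\to1$.

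Next I would compute the first entries explicitly to identify the candidate extremizers: $X_2 = \frac{1+\frac{\alpha}{2}}{2d-\alpha}\cdot\frac{2d+\beta}{d-1+\beta}$ (after simplifying $N_2 = \frac{d-1-\alpha}{d-1+...}$, etc.), and likewise $X_3$. The claim $\inf_{k\geq2}X_k = X_2$ should follow from showing $X_k$ is "eventually increasing after $k=2$" or, more robustly, from a direct comparison $X_k\geq X_2$ for all $k\geq3$; the cleanest route is to analyze the sign of $X_{k+1}-X_k$, which by the quotient formula has the same sign as $N_{k+1}D_k - N_k D_{k+1}$. Introducing the ratios $p_k \defeq N_{k+1}/N_k$ and $q_k \defeq D_{k+1}/D_k$, the sign of $X_{k+1}-X_k$ equals the sign of $p_k - q_k$. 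Writing $N_{k+1}-N_k$ and $D_{k+1}-D_k$ as the single product terms (the $k$-th factors times the truncated products) reduces $p_k-q_k$ to an explicit rational expression in $k$ whose numerator is a polynomial in $k$; the key structural point is that this numerator changes sign \emph{at most once} for $k\geq2$, and the threshold where the sign changes is governed by the comparison between $\beta$ and $\beta_*(d,\alpha)$ from \eqref{eq:beta*}. When $\beta\geq\beta_*$ the sequence goes up from $X_2$ to $X_3$ and then decreases monotonically to the limit $1$, giving $\sup = X_3$; when $\beta<\beta_*$ it is (eventually) increasing, so $\sup = \lim_k X_k = 1$ (not attained). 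In both cases the minimum is $X_2$.

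The main obstacle I expect is the bookkeeping in establishing that the relevant polynomial in $k$ (the numerator of $p_k - q_k$, or equivalently of $N_{k+1}D_k - N_k D_{k+1}$) has exactly the claimed single sign change, and pinning down that the transition occurs precisely at $\beta = \beta_*$. This requires clearing denominators carefully — the factors involve $j\pm\frac{\alpha}{2}$, $j\pm\frac{\beta}{2}$, and $j+\frac{2d-2\pm}{2}$ — and then either factoring the resulting polynomial or bounding it term by term; the degree and the mixing of $\alpha$, $\beta$, $d$ make this delicate. A secondary subtlety is the case $\beta>2$, where the factor $j=1$ in $D_k$'s product is negative, so one must verify $D_k>0$ and that no spurious sign flip in $D_{k+1}-D_k$ is introduced; treating $k=2$ (where $D_2 = 1-\frac{1-\beta/2}{d+\beta/2} = \frac{d-1+\beta}{d+\beta/2}>0$) as a base case and induction handles this. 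Once the single-sign-change claim is in hand, \eqref{eq:sup-inf-X_k} follows immediately, and combined with \eqref{app:mu_k}, \eqref{app:kappa} it yields the explicit formulas \eqref{eq:gamma*}, \eqref{eq:gamma**} after substituting $X_2$, $X_3$ and simplifying (this last simplification, producing the stated rational functions of $d,\alpha,\beta$ times $\nla(B_1)/\nlb(B_1)$ via \eqref{eq:energyball}, is routine but lengthy and would be deferred to Corollary~\ref{cor:appendix}).
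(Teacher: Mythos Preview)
Your plan hinges on the claim that the sign of $X_{k+1}-X_k$ is governed by a polynomial in $k$ that changes sign at most once. This reduction does not work. Write $N_k=1-P_k$ and $D_k=1-Q_k$ with $P_k=\prod_{j=1}^{k-1}\frac{j+\alpha/2}{j+d-1-\alpha/2}$ and $Q_k=\prod_{j=1}^{k-1}\frac{j-\beta/2}{j+d-1+\beta/2}$. Then
\[
N_{k+1}D_k-N_kD_{k+1}=(Q_{k+1}-Q_k)-(P_{k+1}-P_k)+P_{k+1}Q_k-P_kQ_{k+1},
\]
and all four terms still carry the full products $P_k$, $Q_k$: they do not cancel, and what remains is \emph{not} a rational function of $k$ alone. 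The ``$k$-th factors times the truncated products'' that you invoke are exactly $P_k$ and $Q_k$, so clearing denominators does not give a polynomial in $k$. Consequently the single-sign-change assertion is unsupported, and with it the whole monotonicity picture (up to $X_3$, then down for $\beta\geq\beta_*$; increasing for $\beta<\beta_*$). A second, smaller issue: for $\beta<\beta_*$ ``eventually increasing'' would not by itself give $\sup_k X_k=1$; you would still need $X_k\leq1$ for every $k$, not only for large $k$.

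The paper sidesteps monotonicity of the full sequence entirely and proves the three inequalities $X_k>X_2$, $X_k<1$ (when $\beta<\beta_*$), and $X_k\leq X_3$ (when $\beta\geq\beta_*$) directly. Setting $\ell=\tfrac{d-1}{2}$, $t=\ell-\tfrac{\alpha}{2}$, $\tau=\ell+\tfrac{\beta}{2}$ and $a_k,b_k,c_k,d_k$ for the four products, the first two reduce to inductive inequalities on $a_kd_k-b_kc_k$; the threshold $\beta_*$ emerges precisely from the base case $a_3d_3-b_3c_3>0\Leftrightarrow 2+3\ell+\ell^2-t\tau>0$. The bound $X_k\leq X_3$ for $\beta\geq\beta_*$ is the hard part and is \emph{not} a consequence of a simple monotone decrease: the paper handles even $k$ by showing $X_k\leq1\leq X_3$ via a comparison of the function $h_k(x)=\prod_{j=1}^{k-1}\frac{j+\ell-x}{j+\ell+x}$ on $(0,\ell)$ versus $(\ell,\infty)$, and odd $k$ by an implication argument (``if $\sigma_{k+2}>\sigma_k$ then $\sigma_{k+4}>\sigma_{k+2}$'') combined with $\sigma_3=\eta$ and $\lim_k\sigma_k\leq\eta$. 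The necessity of this even/odd split already suggests that the clean monotonicity you conjecture is either false or at least not accessible by the route you propose.
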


\begin{proof}
We start with a few observations. First notice that, thanks to \eqref{eq:limit_prod}, it is immediate to check that $\lim_{k\to\infty}X_k=1$. Following \cite{FFMMM}, we introduce some notation to simplify the computations below. We set
\begin{equation*}
\ell \defeq \frac{d-1}{2}, \qquad t\defeq \ell-\frac{\alpha}{2}, \qquad \tau\defeq \ell+\frac{\beta}{2}
\end{equation*}
(notice that $0<t<\tau$). We also define, for $k\geq2$,
\begin{equation*}
a_k\defeq \prod_{j=1}^{k-1}(j+\ell-t), \quad
b_k\defeq \prod_{j=1}^{k-1}(j+\ell+t), \quad
c_k\defeq \prod_{j=1}^{k-1}(j+\ell-\tau), \quad
d_k\defeq \prod_{j=1}^{k-1}(j+\ell+\tau).
\end{equation*}
With these positions, we have
\begin{equation} \label{app:X_k-2}
X_k = \frac{1-\frac{a_k}{b_k}}{1-\frac{c_k}{d_k}}\,.
\end{equation}
We are now ready to prove the properties in the statement.

\medskip\noindent\textit{Step 1.}
We show that $\inf_{k\geq 2}X_k=X_2$. Suppose first that $0<\beta<2$, and note that in this case $c_k=\prod_{j=1}^{k-1} \left(j-\frac{\beta}{2}\right)>0$ for all $k \geq 2$.

Hence, by a simple calculation, we observe that $X_2<X_k$ is equivalent to
	\[
		(\tld{a}_k \tld{d}_k - \tld{b}_k \tld{c}_k) t \tau + (1+\ell)\big[(\tld{b}_k-\tld{a}_k)\tld{d}_k \tau - (\tld{d}_k-\tld{c}_k)\tld{b}_k t\big]>0,
	\]
where $\tld{a}_k \defeq \frac{a_k}{a_2}$, $\tld{b}_k \defeq \frac{b_k}{b_2}$, $\tld{c}_k \defeq \frac{c_k}{c_2}$, and $\tld{d}_k \defeq \frac{d_k}{d_2}$.
Therefore, to obtain $X_2<X_k$ it suffices to show
	\begin{align}
		\tld{a}_k \tld{d}_k - \tld{b}_k \tld{c}_k &> 0 \qquad \text{for all } k \geq 3,   \label{app:X2-first-cond} \\
		(\tld{b}_k-\tld{a}_k)\tld{d}_k \tau - (\tld{d}_k-\tld{c}_k)\tld{b}_k t &>0 \qquad \text{for all } k \geq 3. \label{app:X2-second-cond}
	\end{align}
	
To prove \eqref{app:X2-first-cond}, note that
	\[
		(j+\ell-t)(j+\ell+\tau) - (j+\ell+t)(j+\ell-\tau) = 2(j+\ell)(\tau-t)>0.
	\]
Since $\beta<2$ by assumption, $j+\ell-\tau>0$ for all $j=2,\ldots,k-1$; hence,
	\[
		\tld{a}_k \tld{d}_k = \prod_{j=2}^{k-1} (j+\ell-t)(j+\ell+\tau) > \prod_{j=2}^{k-1} (j+\ell+t)(j+\ell-\tau) = \tld{b}_k \tld{c}_k.
	\]
	
We prove \eqref{app:X2-second-cond} by induction. For $k=3$ an explicit calculation yields
	\[
		(\tld{b}_3-\tld{a}_3)\tld{d}_3 \tau - (\tld{d}_3-\tld{c}_3)\tld{b}_3 t = 2 t \tau (\tau-t) >0.
	\]
Now assume that \eqref{app:X2-second-cond} holds for some $k$. Then, by a straightforward computation and by noting that $t<\tau$, we have
	\begin{align*}
		(\tld{b}_{k+1}-\tld{a}_{k+1})\tld{d}_{k+1} \tau - (\tld{d}_{k+1}-\tld{c}_{k+1})\tld{b}_{k+1} t  &= 2t\tau \tld{b}_k \tld{d}_k(k+\ell+\tau)-2t\tau \tld{b}_k \tld{d}_k(k+\ell+t)\\
																		&\qquad\qquad +(k+\ell-t)(k+\ell+\tau)(\tld{b}_k-\tld{a}_k)\tld{d}_k \tau \\
																		&\qquad\qquad\qquad -(k+\ell-\tau)(k+\ell+t)(\tld{d}_k-\tld{c}_k)\tld{b}_k t \\
																		&	\xupref{app:X2-second-cond}{\geq} 2(k+\ell)(\tau-t)(\tld{d}_k-\tld{c}_k)\tld{b}_k t > 0,
	\end{align*}
and \eqref{app:X2-second-cond} follows by induction. This completes the proof of the claim in the case $0<\beta<2$.

Now suppose $\beta \geq 2$, and consider
	\[
		\frac{X_k}{X_2} = \frac{1-\frac{a_k}{b_k}}{1-\frac{a_2}{b_2}} \ \frac{1-\frac{c_2}{d_2}}{1-\frac{c_k}{d_k}}\,.
	\]
Note that
	\[
		\prod_{j=2}^{k-1} (j+\ell+t) > \prod_{j=2}^{k-1} (j+\ell-t)  \qquad\Longleftrightarrow\qquad \frac{1-\frac{a_k}{b_k}}{1-\frac{a_2}{b_2}} > 1,
	\]
and the inequality on the left-hand side hold trivially since all factors are positive. On the other hand,
	\[
		\frac{1+\ell-\tau}{1+\ell+\tau} \leq \prod_{j=1}^{k-1} \frac{j+\ell-\tau}{j+\ell+\tau} \qquad\Longleftrightarrow\qquad \frac{1-\frac{c_2}{d_2}}{1-\frac{c_k}{d_k}} \geq 1.
	\]
The inequality on the left-hand side holds since we have that $\prod_{j=2}^{k-1} \frac{j+\ell-\tau}{j+\ell+\tau} \leq 1$, and $\tau \geq 1+\ell$ thanks to the assumption $\beta \geq 2$. Therefore $X_k > X_2$ for any $k \geq 3$.

\medskip\noindent\textit{Step 2.}
To complete the proof it remains to determine the supremum of the sequence $(X_k)_{k\geq2}$. We preliminary observe that, by \eqref{app:X_k-2}, the condition $X_k<1$ is equivalent to
\begin{equation*}
\frac{(b_k-a_k)d_k-(d_k-c_k)b_k}{(d_k-c_k)b_k}<0.
\end{equation*}
Since $b_k>0$ and $d_k-c_k>0$ for every $k$, as it is easy to check, we obtain the equivalence of the two conditions
\begin{equation} \label{app:X_k<1}
X_k < 1 \qquad\Longleftrightarrow\qquad a_kd_k - b_kc_k>0.
\end{equation}
In particular, we can compare the value of $X_3$ with $1$, which is the limit value of the sequence as $k\to\infty$. By a straightforward calculation one can check that
\begin{equation} \label{app:X_3<1}
X_3 < 1 \quad\Longleftrightarrow\quad a_3d_3 - b_3c_3>0 \quad\Longleftrightarrow\quad 2+3\ell+\ell^2-t\tau>0 \quad\Longleftrightarrow\quad \beta<\beta_*
\end{equation}
where $\beta_*=\beta_*(d,\alpha)$ is the constant defined in \eqref{eq:beta*}. This computation also explains the role of the constant $\beta_*$: it is the unique value of $\beta$, for given $d$ and $\alpha$, such that $X_3=1=\lim_{k\to\infty}X_k$.

We first consider the case $\beta<\beta_*$ and we prove that in this case the supremum of the sequence $(X_k)_k$ is exactly 1, that is the limit value as $k\to\infty$. More precisely, we claim that
\begin{equation}\label{app:claim1}
\text{if }\beta<\beta_*, \text{ then }X_k<1\text{ for all }k\geq3.
\end{equation}
Recalling \eqref{app:X_k<1}, we will obtain the proof of the claim by showing that
\begin{equation} \label{app:induction-claim1}
|b_kc_k|\leq a_kd_k \qquad\text{for all }k\geq3.
\end{equation}
We prove this inequality by induction. For $k=3$ we have $b_3c_3<a_3d_3$ since we are assuming $\beta<\beta_*$ (recall \eqref{app:X_3<1}). On the other hand, we need to show that $b_3c_3+a_3d_3>0$. We have by definition
\begin{align*}
b_3c_3+a_3d_3
% & = (1+\ell+t)(2+\ell+t)(1+\ell-\tau)(2+\ell-\tau) + (1+\ell-t)(2+\ell-t)(1+\ell+\tau)(2+\ell+\tau) \\
& = (1+\ell+t)(2+\ell+t)(1-\textstyle\frac{\beta}{2})(2-\frac{\beta}{2}) + (1+\frac{\alpha}{2})(2+\frac{\alpha}{2})(1+\ell+\tau)(2+\ell+\tau).
\end{align*}
From this expression, it is clear that if $\beta\leq2$ or $\beta\geq4$ both terms are positive (or zero) and we obtain $b_3c_3+a_3d_3>0$, as desired. If $2< \beta< 4$, the first term is negative, however the sum of the two terms is positive, as
\begin{align*}
0< (1+\ell+t)(2+\ell+t)<(1+\ell+\tau)(2+\ell+\tau), \quad |(1-\textstyle\frac{\beta}{2})(2-\frac{\beta}{2})|  < 1 < (1+\frac{\alpha}{2})(2+\frac{\alpha}{2}).
\end{align*}

We next prove the induction step. Suppose that the inequality \eqref{app:induction-claim1} holds for some $k\geq3$, and let us prove it for $k+1$. We have
\begin{align*}
b_{k+1}c_{k+1} = b_kc_k(k+\ell+t)(k+\ell-\tau), \qquad a_{k+1}d_{k+1} = a_kd_k(k+\ell-t)(k+\ell+\tau).
\end{align*}
By the inductive step $|b_kc_k|<|a_kd_k|$, therefore in view of the previous identities it is enough to show that
$$
|(k+\ell+t)(k+\ell-\tau)| < (k+\ell-t)(k+\ell+\tau).
$$
This can be checked directly: indeed $(k+\ell+t)(k+\ell-\tau)-(k+\ell-t)(k+\ell+\tau)=-2(k+\ell)(\tau-t)<0$, and on the other hand $(k+\ell+t)(k+\ell-\tau)+(k+\ell-t)(k+\ell+\tau)=2(k^2+2k\ell+\ell^2-t\tau)>2(3+2\ell+\ell^2-t\tau)>0$ by \eqref{app:X_3<1}.
This completes the proof of \eqref{app:claim1}.

\medskip\noindent\textit{Step 3.}
We finally determine the supremum of the sequence $(X_k)_k$ in the case $\beta\geq\beta_*$, and we show that it is attained for $k=3$: we claim that
\begin{equation}\label{app:claim2}
\text{if }\beta\geq\beta_*, \text{ then }X_k \leq X_3\text{ for all }k\geq4.
\end{equation}

We first prove \eqref{app:claim2} for $k$ even, by showing that $X_k\leq 1$ for all $k$ even. By \eqref{app:X_k<1} we have that $X_k\leq 1$ if and only if $\frac{c_k}{d_k}\leq \frac{a_k}{b_k}$, which can be written as $h_k(\tau)\leq h_k(t)$ for $h_k(x)\coloneqq\prod_{j=1}^{k-1}\frac{j+\ell-x}{j+\ell+x}$. Since $t\in(0,\ell)$ and $\tau\in(\ell,\infty)$, we obtain that $X_k \leq 1$ for $k$ even, if we show that
\begin{equation} \label{app:even-1}
\sup_{x\in(\ell,\infty)}h_k(x) \leq \inf_{x\in(0,\ell)}h_k(x) \qquad\text{for all $k$ even.}
\end{equation}
By differentiating it is easy to see that $h_k$ is decreasing in $(0,\ell+1)$. Moreover, $h_k(x)<0$ for all $x>\ell+k-1$ and $k$ even, since in this case $h_k(x)$ is the product of an odd number of negative terms. It follows by the previous properties that $h_k(x)<h_k(y)$ for all $x\in(\ell,\ell+1]\cup[\ell+k-1,\infty)$ and $y\in(0,\ell)$. Then to complete the proof of \eqref{app:even-1}, bearing in mind the monotonicity of $h_k$, it is enough to show that $h_k(\ell)>h_k(x)$ for all $x\in(\ell+1,\ell+k-1)$, which is equivalent to
\begin{equation} \label{app:even-2}
\prod_{j=1}^{k-1}\frac{j}{j+2\ell} > \prod_{j=1}^{k-1}\frac{j+\ell-x}{j+\ell+x} \qquad\text{for all }x\in(\ell+1,\ell+k-1).
\end{equation}
Let $r\in\{1,\ldots,k-2\}$ be such that $x\in[\ell+r,\ell+r+1)$: then by reordering the terms on the left-hand side we can write \eqref{app:even-2} as
\begin{equation*}
\Biggl(\prod_{j=1}^{r}\frac{r-j+1}{r-j+1+2\ell}\Biggr)\Biggl(\prod_{j=r+1}^{k-1}\frac{j}{j+2\ell}\Biggr) >
\Biggl(\prod_{j=1}^{r}\frac{j+\ell-x}{j+\ell+x}\Biggr)\Biggl(\prod_{j=r+1}^{k-1}\frac{j+\ell-x}{j+\ell+x}\Biggr).
\end{equation*}
By the condition $x\in[\ell+r,\ell+r+1)$ it is now easily checked that $|\frac{j+\ell-x}{j+\ell+x}|\leq\frac{r-j+1}{r-j+1+2\ell}$ for all $j\in\{1,\ldots,r\}$, and $|\frac{j+\ell-x}{j+\ell+x}|\leq\frac{j}{j+2\ell}$ for all $j\in\{r+1,\ldots,k-1\}$. Hence every term in the product on the right-hand side is, in absolute value, smaller than the corresponding term in the product on the left-hand side. This proves \eqref{app:even-2} and completes the proof of the fact that $X_k\leq 1$ for all $k\geq4$ even. Since $X_3\geq1$ (by the assumption $\beta\geq\beta_*$) this proves \eqref{app:claim2} in the case $k$ even.

Eventually, we prove \eqref{app:claim2} in the case $k$ odd. By \eqref{app:X_k-2} and a simple calculation, the condition $X_k\leq X_3$ is equivalent to
\begin{equation*}
\frac{c_k}{d_k}d_3 t - \frac{a_k}{b_k}b_3\tau + (2+3\ell+\ell^2-t\tau)(\tau-t) \leq 0.
\end{equation*}
Letting $\sigma_k\defeq \frac{c_k}{d_k}d_3 t - \frac{a_k}{b_k}b_3\tau$ and $\eta\defeq-(2+3\ell+\ell^2-t\tau)(\tau-t)$, we need to show that 
\begin{equation} \label{app:odd-1}
\sigma_k \leq \eta \qquad \text{for all $k\geq3$ odd}. 
\end{equation}
Notice  that $\sigma_3=\eta$ and $\lim_{k\to\infty}\sigma_k\leq\eta$, since the assumption $\beta\geq\beta_*$ gives $\lim_{k\to\infty}X_k\leq X_3$. In view of these two properties, to prove \eqref{app:odd-1} it is enough to show the following claim:
\begin{equation} \label{app:odd-2}
\text{if } \sigma_{k+2}>\sigma_k \text{ for some $k\geq3$ odd, then } \sigma_{k+4}>\sigma_{k+2}.
\end{equation}
By a straightforward computation one has
\begin{equation} \label{app:odd-3}
\sigma_{k+2} > \sigma_{k} \qquad\Longleftrightarrow\qquad \prod_{j=1}^{k-1}\frac{j+\ell-t}{j+2+\ell+t} > \prod_{j=1}^{k-1}\frac{j+\ell-\tau}{j+2+\ell+\tau}\,.
\end{equation}

Assume now that $\sigma_{k+2}>\sigma_{k}$ for some $k\geq3$ odd. We distinguish three cases depending on the value of $k$. If $k>\tau-\ell$, then
\begin{equation} \label{app:odd-4}
\frac{(k+\ell-t)(k+1+\ell-t)}{(k+2+\ell+t)(k+3+\ell+t)} > \frac{(k+\ell-\tau)(k+1+\ell-\tau)}{(k+2+\ell+\tau)(k+3+\ell+\tau)}
\end{equation}
(since $\tau>t$ and the factors on the right-hand side are positive). Then by the assumption $\sigma_{k+2}>\sigma_{k}$, \eqref{app:odd-3}, and \eqref{app:odd-4}, we obtain
\begin{equation} \label{app:odd-5}
\prod_{j=1}^{k+1}\frac{j+\ell-t}{j+2+\ell+t} > \prod_{j=1}^{k+1}\frac{j+\ell-\tau}{j+2+\ell+\tau}\,,
\end{equation}
which is equivalent to $\sigma_{k+4}>\sigma_{k+2}$, as desired.

Consider next the case $\tau-\ell-1\leq k \leq \tau-\ell$. In this case the inequality \eqref{app:odd-5} is trivially satisfied, since the left-hand side is positive and the right-hand side is negative or zero (as $k$ is odd). Hence also in this case $\sigma_{k+4}>\sigma_{k+2}$.

Finally, let $k<\tau-\ell-1$. We claim that
\begin{equation} \label{app:odd-6}
0<\frac{\tau-k-\ell}{k+2+\ell+\tau}<\frac{k+\ell-t}{k+2+\ell+t}\,, \qquad 0<\frac{\tau-(k+1)-\ell}{k+3+\ell+\tau}<\frac{k+1+\ell-t}{k+3+\ell+t}\,.
\end{equation}
Indeed, first notice that
$$
\frac{\tau-j-\ell}{j+2+\ell+\tau}\geq\frac{j+\ell-t}{j+2+\ell+t} \qquad\Longleftrightarrow\qquad t+\tau+t\tau \geq j^2+2j+2j\ell+2\ell+\ell^2.
$$
If the first inequality in \eqref{app:odd-6} fails, then $t+\tau+t\tau \geq k^2+2k+2k\ell+2\ell+\ell^2> j^2+2j+2j\ell+2\ell+\ell^2$ for all $j\in\{1,\ldots,k-1\}$, hence
$$
\frac{\tau-j-\ell}{j+2+\ell+\tau} > \frac{j+\ell-t}{j+2+\ell+t} \qquad\text{for all }j\in\{1,\ldots,k-1\}.
$$
In view of \eqref{app:odd-3} this contradicts the assumption $\sigma_{k+2}>\sigma_{k}$. Similarly, also the second inequality in \eqref{app:odd-6} must be true. By \eqref{app:odd-6} we deduce that the inequality \eqref{app:odd-4} holds also in this case and, arguing as before, we obtain \eqref{app:odd-5}, which is equivalent to $\sigma_{k+4}>\sigma_{k+2}$.

Therefore we proved the implication \eqref{app:odd-2} in all cases, which yields \eqref{app:odd-1} and, in turn, the conclusion of the lemma.
\end{proof}

\begin{corollary} \label{cor:appendix}
The identities \eqref{eq:gamma*} and \eqref{eq:gamma**} hold.
\end{corollary}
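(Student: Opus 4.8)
The plan is purely computational. By the first part of the proof of Theorem~\ref{thm:stability} together with the identity \eqref{app:mu_k} and the positivity of $\kappa(d,\alpha,\beta)$ in \eqref{app:kappa}, everything is reduced to
\[
\gamma_* = \kappa(d,\alpha,\beta)\,\sup_{k\geq2}X_k, \qquad
\gamma_{**} = \kappa(d,\alpha,\beta)\,\inf_{k\geq2}X_k,
\]
so that, invoking Lemma~\ref{lem:appendix}, it suffices to evaluate $\kappa(d,\alpha,\beta)$, $X_2$ and $X_3$ explicitly and then to combine them according to the two cases $\beta\geq\beta_*$ and $\beta<\beta_*$.

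First I would compute $X_2$ and $X_3$ directly from the definition \eqref{app:X_k}. For $k=2$ the two products reduce to the single factor $j=1$, and a short simplification gives
\[
X_2 = \frac{1-\frac{2+\alpha}{2d-\alpha}}{\,1-\frac{2-\beta}{2d+\beta}\,} = \frac{(d-1-\alpha)(2d+\beta)}{(d-1+\beta)(2d-\alpha)}\,.
\]
For $k=3$ the products run over $j=1,2$; expanding numerator and denominator one is led to the two quadratic differences $(2d-\alpha)(2d+2-\alpha)-(2+\alpha)(4+\alpha)$ and $(2d+\beta)(2d+2+\beta)-(2-\beta)(4-\beta)$, which factor as $4(d+2)(d-1-\alpha)$ and $4(d+2)(d-1+\beta)$ respectively, so that
\[
X_3 = \frac{(d-1-\alpha)(2d+\beta)(2d+2+\beta)}{(d-1+\beta)(2d-\alpha)(2d+2-\alpha)}\,.
\]

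Next I would rewrite the prefactor $\kappa(d,\alpha,\beta)$ in terms of $\nla(B_1)/\nlb(B_1)$ using the formula \eqref{eq:energyball} for $\nl_\sigma(B_1)$ with $\sigma=-\alpha$ and $\sigma=\beta$. The ratio $\nla(B_1)/\nlb(B_1)$ carries exactly the factor $2^{-(\alpha+\beta)}$ and the $\Gamma$-quotient $\Gamma\!\bigl(\tfrac{d-1-\alpha}{2}\bigr)\Gamma\!\bigl(\tfrac{2d-2+\beta}{2}\bigr)\big/\bigl[\Gamma\!\bigl(\tfrac{2d-2-\alpha}{2}\bigr)\Gamma\!\bigl(\tfrac{d-1+\beta}{2}\bigr)\bigr]$ appearing in \eqref{app:kappa}; dividing, both the power of $2$ and the $\Gamma$-factors cancel, and the remaining rational factors combine to give
\[
\kappa(d,\alpha,\beta) = \frac{\alpha(d-\alpha)(2d-\alpha)(d-1+\beta)}{\beta(d+\beta)(2d+\beta)(d-1-\alpha)}\cdot\frac{\nla(B_1)}{\nlb(B_1)}\,.
\]

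Finally I would assemble the three pieces. If $\beta<\beta_*$ then $\sup_{k\geq2}X_k=1$ by Lemma~\ref{lem:appendix}, so $\gamma_*=\kappa(d,\alpha,\beta)$, which is the second line of \eqref{eq:gamma*}. If $\beta\geq\beta_*$ then $\gamma_*=\kappa(d,\alpha,\beta)X_3$, and multiplying the two displayed expressions the factors $(d-1-\alpha)$, $(d-1+\beta)$, $(2d-\alpha)$, $(2d+\beta)$ all cancel, leaving $\tfrac{\alpha(d-\alpha)(2d+2+\beta)}{\beta(d+\beta)(2d+2-\alpha)}\cdot\tfrac{\nla(B_1)}{\nlb(B_1)}$, the first line of \eqref{eq:gamma*}. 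Likewise $\gamma_{**}=\kappa(d,\alpha,\beta)X_2$, and the same cancellation yields $\tfrac{\alpha(d-\alpha)}{\beta(d+\beta)}\cdot\tfrac{\nla(B_1)}{\nlb(B_1)}$, which is \eqref{eq:gamma**}. There is no genuine obstacle here; the only steps needing some care are the factorizations of the two quadratic differences in the computation of $X_3$ and the verification that the $\Gamma$-factors and the power of $2$ in $\kappa(d,\alpha,\beta)$ and in $\nla(B_1)/\nlb(B_1)$ match up exactly, so that they cancel cleanly.
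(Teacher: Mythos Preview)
Your proof is correct and follows exactly the same route as the paper: reduce $\gamma_*,\gamma_{**}$ to $\kappa(d,\alpha,\beta)\sup_{k\geq2}X_k$ and $\kappa(d,\alpha,\beta)\inf_{k\geq2}X_k$ via \eqref{app:mu_k} and Lemma~\ref{lem:appendix}, compute $X_2$, $X_3$ explicitly, rewrite $\kappa$ using \eqref{eq:energyball}, and combine. Your intermediate value $X_2=\frac{(d-1-\alpha)(2d+\beta)}{(d-1+\beta)(2d-\alpha)}$ differs from the simpler expression printed in the paper's \eqref{app:X_2-X_3}, but yours is the one that is actually consistent with the definition \eqref{app:X_k} and that, multiplied by the $\kappa$ in \eqref{app:kappa-2}, gives exactly \eqref{eq:gamma**}.
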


\begin{proof}
By the definition \eqref{eq:gammatilde*}--\eqref{eq:gammatilde**} of $\gamma_*$ and $\gamma_{**}$, the identity \eqref{app:mu_k}, and Lemma~\ref{lem:appendix}, we have that
\begin{equation} \label{eq:sup-inf-mu_k}
\gamma_{*} =
\begin{cases}
\kappa(d,\alpha,\beta) X_3 & \text{if }\beta \geq \beta_*(d,\alpha),\\
\kappa(d,\alpha,\beta) & \text{if } \beta < \beta_*(d,\alpha),
\end{cases}
\qquad
\gamma_{**} = \kappa(d,\alpha,\beta)X_2.
\end{equation}
Therefore to conclude the proof it is enough to check that the values in \eqref{eq:sup-inf-mu_k} coincide with those in \eqref{eq:gamma*} and \eqref{eq:gamma**}.

We first express $\kappa(d,\alpha,\beta)$ in terms of the energies $\nl_{-\alpha}(B_1)$ and $\nl_{\beta}(B_1)$. Indeed, by using \eqref{eq:energyball} we compute
\begin{equation*}
\frac{\nl_{-\alpha}(B_1)}{\nl_{\beta}(B_1)} =
2^{-(\alpha+\beta)} \frac{(d+\beta)(2d+\beta)(d-1-\alpha)(2d-2+\beta)}{(d-\alpha)(2d-\alpha)(d-1+\beta)(2d-2-\alpha)} \, \frac{\Gamma\bigl(\frac{d-1-\alpha}{2}\bigr)\Gamma\bigl(\frac{2d-2+\beta}{2}\bigr)}{\Gamma\bigl(\frac{d-1+\beta}{2}\bigr)\Gamma\bigl(\frac{2d-2-\alpha}{2}\bigr)} \,.
\end{equation*}
By inserting this expression into \eqref{app:kappa} we find
\begin{equation} \label{app:kappa-2}
\kappa(d,\alpha,\beta) = \frac{\alpha(d-\alpha)(2d-\alpha)(d-1+\beta)}{\beta(d+\beta)(2d+\beta)(d-1-\alpha)}\cdot \frac{\nl_{-\alpha}(B_1)}{\nl_{\beta}(B_1)}.
\end{equation}
Moreover, it is straightforward to compute
\begin{equation}\label{app:X_2-X_3}
X_2 = \frac{d-1-\alpha}{d-1+\beta}\,, \qquad\qquad
X_3 = \frac{(d-1-\alpha)(2d+\beta)(2d+2+\beta)}{(d-1+\beta)(2d-\alpha)(2d+2-\alpha)}\,.
\end{equation}
By inserting \eqref{app:kappa-2} and \eqref{app:X_2-X_3} into \eqref{eq:sup-inf-mu_k} we obtain the explicit values of $\gamma_*$ and $\gamma_{**}$.
\end{proof}

%%%%%%%%%%%%%%%%%%%%%%%%%%%%%%%%%%%%%%%%%%%%%%%%%%%%%
%%%%%%%%%%%%%%%%%%%%%%%%%%%%%%%%%%%%%%%%%%%%%%%%%%%%%
%%%%%%%%%%%%%%%%%%%%%%%%%%%%%%%%%%%%%%%%%%%%%%%%%%%%%
%%%%%%%%%%%%%%%%%%%%%%%%%%%%%%%%%%%%%%%%%%%%%%%%%%%%%
%%%%%%%%%%%%%%%%%%%%%%%%%%%%%%%%%%%%%%%%%%%%%%%%%%%%%

\bigskip
\subsection*{Acknowledgments}

The authors would like to thank Rupert Frank for his valuable comments on the preprint version of this paper. IT’s research is partially supported by the Simons Collaboration Grant for Mathematicians No. 851065.
MB is member of the GNAMPA group of INdAM.

\bibliographystyle{IEEEtranSA}
\def\url#1{}
\bibliography{references}

% Generated by IEEEtranSA.bst, version: 1.14 (2015/08/26)
\providecommand{\etalchar}[1]{$^{#1}$}
\begin{thebibliography}{MRCSV11}
\providecommand{\url}[1]{#1}
\csname url@samestyle\endcsname
\providecommand{\newblock}{\relax}
\providecommand{\bibinfo}[2]{#2}
\providecommand{\BIBentrySTDinterwordspacing}{\spaceskip=0pt\relax}
\providecommand{\BIBentryALTinterwordstretchfactor}{4}
\providecommand{\BIBentryALTinterwordspacing}{\spaceskip=\fontdimen2\font plus
\BIBentryALTinterwordstretchfactor\fontdimen3\font minus
  \fontdimen4\font\relax}
\providecommand{\BIBforeignlanguage}[2]{{%
\expandafter\ifx\csname l@#1\endcsname\relax
\typeout{** WARNING: IEEEtranSA.bst: No hyphenation pattern has been}%
\typeout{** loaded for the language `#1'. Using the pattern for}%
\typeout{** the default language instead.}%
\else
\language=\csname l@#1\endcsname
\fi
#2}}
\providecommand{\BIBdecl}{\relax}
\BIBdecl

\bibitem[Asc22]{Asc22}
G.~Ascione, ``A spherical rearrangement proof of the stability of a
  {R}iesz-type inequality and an application to an isoperimetric type
  problem,'' \emph{ESAIM Control Optim. Calc. Var.}, vol.~28, pp. Paper No. 4,
  53, 2022.

\bibitem[BC14]{BC14}
M.~Bonacini and R.~Cristoferi, ``Local and global minimality results for a
  nonlocal isoperimetric problem on {$\mathbb{R}^N$},'' \emph{SIAM J. Math.
  Anal.}, vol.~46, no.~4, pp. 2310--2349, 2014.

\bibitem[BCT18]{BurChoTop18}
A.~Burchard, R.~Choksi, and I.~Topaloglu, ``Nonlocal shape optimization via
  interactions of attractive and repulsive potentials,'' \emph{Indiana Univ.
  Math. J.}, vol.~67, no.~1, pp. 375--395, 2018.

\bibitem[BT11]{BernoffTopaz}
A.~Bernoff and C.~Topaz, ``A primer of swarm equilibria,'' \emph{SIAM J. Appl.
  Dyn. Syst.}, vol.~10, no.~1, pp. 212--250, 2011.

\bibitem[CDM16]{CDM14}
J.~A. Carrillo, M.~G. Delgadino, and A.~Mellet, ``Regularity of local
  minimizers of the interaction energy via obstacle problems,'' \emph{Comm.
  Math. Phys.}, vol. 343, no.~3, pp. 747--781, 2016.

\bibitem[CFT15]{ChFeTo14}
R.~Choksi, R.~C. Fetecau, and I.~Topaloglu, ``On minimizers of interaction
  functionals with competing attractive and repulsive potentials,'' \emph{Ann.
  Inst. H. Poincar\'e Anal. Non Lin\'eaire}, vol.~32, no.~6, pp. 1283--1305,
  2015.

\bibitem[Chr17]{Christ17}
M.~Christ, ``A sharpened {R}iesz-{S}obolev inequality,'' \emph{arXiv preprint
  arXiv:1706.02007}, 2017.

\bibitem[CP22]{CarPra}
D.~Carazzato and A.~Pratelli, ``On the existence and boundedness of minimizing
  measures for a general form of non-local energies,'' \emph{Preprint}, 2022.

\bibitem[FFM{\etalchar{+}}15]{FFMMM}
A.~Figalli, N.~Fusco, F.~Maggi, V.~Millot, and M.~Morini, ``Isoperimetry and
  stability properties of balls with respect to nonlocal energies,''
  \emph{Comm. Math. Phys.}, vol. 336, no.~1, pp. 441--507, 2015.

\bibitem[FHK11]{FetecauHuangKolokolnikov}
R.~Fetecau, Y.~Huang, and T.~Kolokolnikov, ``Swarm dynamics and equilibria for
  a nonlocal aggregation model,'' \emph{Nonlinearity}, vol.~24, no.~10, pp.
  2681--2716, 2011.

\bibitem[FL18]{FraLie18}
R.~L. Frank and E.~H. Lieb, ``A ``liquid-solid'' phase transition in a simple
  model for swarming, based on the ``no flat-spots'' theorem for subharmonic
  functions,'' \emph{Indiana Univ. Math. J.}, vol.~67, no.~4, pp. 1547--1569,
  2018.

\bibitem[FL19]{FraLie19_note}
------, ``A note on a theorem of {M}. {C}hrist,'' \emph{arXiv preprint
  arXiv:1909.04598}, 2019.

\bibitem[FL21]{FraLie21}
------, ``Proof of spherical flocking based on quantitative rearrangement
  inequalities,'' \emph{Ann. Sc. Norm. Super. Pisa Cl. Sci. (5)}, vol.~22,
  no.~3, pp. 1241--1263, 2021.

\bibitem[FP20]{FusPra20}
N.~Fusco and A.~Pratelli, ``Sharp stability for the {R}iesz potential,''
  \emph{ESAIM Control Optim. Calc. Var.}, vol.~26, pp. Paper No. 113, 24, 2020.

\bibitem[Fra21]{Fra}
R.~L. Frank, ``Some minimization problems for mean field models with competing
  forces,'' \emph{Preprint}, 2021.

\bibitem[Gro96]{Gro}
H.~Groemer, \emph{Geometric applications of {F}ourier series and spherical
  harmonics}, ser. Encyclopedia of Mathematics and its Applications.\hskip 1em
  plus 0.5em minus 0.4em\relax Cambridge: Cambridge University Press, 1996,
  vol.~61.

\bibitem[Lop19]{Lop19}
O.~Lopes, ``Uniqueness and radial symmetry of minimizers for a nonlocal
  variational problem,'' \emph{Commun. Pure Appl. Anal.}, vol.~18, no.~5, pp.
  2265--2282, 2019.

\bibitem[MRCS10]{MaRoSa10}
\BIBentryALTinterwordspacing
B.~Maury, A.~Roudneff-Chupin, and F.~Santambrogio, ``A macroscopic crowd motion
  model of gradient flow type,'' \emph{Math. Models Methods Appl. Sci.},
  vol.~20, no.~10, pp. 1787--1821, 2010. [Online]. Available:
  \url{http://dx.doi.org/10.1142/S0218202510004799}
\BIBentrySTDinterwordspacing

\bibitem[MRCSV11]{MaRoSaVe11}
\BIBentryALTinterwordspacing
B.~Maury, A.~Roudneff-Chupin, F.~Santambrogio, and J.~Venel, ``Handling
  congestion in crowd motion modeling,'' \emph{Netw. Heterog. Media}, vol.~6,
  no.~3, pp. 485--519, 2011. [Online]. Available:
  \url{http://dx.doi.org/10.3934/nhm.2011.6.485}
\BIBentrySTDinterwordspacing

\bibitem[TBL06]{TopazBertozzi2}
C.~Topaz, A.~Bertozzi, and M.~Lewis, ``A nonlocal continuum model for
  biological aggregation,'' \emph{Bull. Math. Biol.}, vol.~68, no.~7, pp.
  1601--1623, 2006.

\end{thebibliography}

\end{document}